\documentclass[12pt]{amsart}
\usepackage{amsfonts,amsmath,amssymb,amsthm}
\usepackage{mathrsfs}
\usepackage{graphics}
\usepackage{fullpage}
\usepackage{hyperref}
\usepackage[english]{babel}
\usepackage[latin1]{inputenc}
\usepackage{cite}
\allowdisplaybreaks
\numberwithin{equation}{section}


\newtheorem{theorem}{Theorem}[section]
\newtheorem{lemma}[theorem]{\bf Lemma}
\newtheorem{corollary}[theorem]{Corollary}
\newtheorem{prop}[theorem]{Proposition}
\newtheorem{definition}[theorem]{Definition}

\theoremstyle{remark}
\newtheorem{remark}[theorem]{Remark}


\newcommand{\N}{\mathbb{N}}
\newcommand{\Z}{\mathbb{Z}}

\newcommand{\R}{\mathbb{R}}

\newcommand{\calQ}{\mathcal{Q}}

\newcommand{\calS}{\mathcal{S}}
\newcommand{\calG}{\mathcal{G}}
\DeclareMathOperator{\supp}{supp}
\DeclareMathOperator{\dist}{dist}
\DeclareMathOperator{\esssup}{ess \; sup}
\DeclareMathOperator{\essinf}{ess \; inf}

\newcommand{\vf}{\mathbf{f}}
\newcommand{\vg}{\mathbf{g}}
\newcommand{\vh}{\mathbf{h}}
\newcommand{\vv}{\mathbf{v}}
\newcommand{\vu}{\mathbf{u}}
\newcommand{\ve}{\mathbf{e}}
\newcommand{\vk}{\mathbf{k}}
\newcommand{\calW}{\mathcal{W}}

\newcommand{\op}{\mathrm{op}}


\newcommand{\bk}{\backslash}


\def\Xint#1{\mathchoice
   {\XXint\displaystyle\textstyle{#1}}%
   {\XXint\textstyle\scriptstyle{#1}}%
   {\XXint\scriptstyle\scriptscriptstyle{#1}}%
   {\XXint\scriptscriptstyle\scriptscriptstyle{#1}}%
   \!\int}
\def\XXint#1#2#3{{\setbox0=\hbox{$#1{#2#3}{\int}$}
     \vcenter{\hbox{$#2#3$}}\kern-.5\wd0}}

\def\avgint{\Xint-}
\def\dashint{\Xint-}


\newcommand{\pp}{{p(\cdot)}}
\newcommand{\Pp}{\mathcal{P}}
\newcommand{\Lpp}{L^\pp}
\newcommand{\cpp}{{p'(\cdot)}}
\newcommand{\Lcpp}{L^\cpp}

\newcommand{\calA}{\mathcal{A}}
\newcommand{\A}{\mathcal{A}}

\newcommand{\scrWloc}{\mathscr{W}_{\text{loc}}^{1,1}}
\newcommand{\scrWpp}{\mathscr{W}^{1,\pp}}
\newcommand{\scrHpp}{\mathscr{H}^{1,\pp}}

\begin{document}

\title{Convolution Operators in Matrix Weighted, Variable Lebesgue Spaces}

\author{David Cruz-Uribe, OFS}

\address{Department of Mathematics, University of Alabama, Tuscaloosa, AL 35487, USA}

\author{Michael Penrod}

\address{Department of Mathematics, University of Alabama, Tuscaloosa, AL 35487, USA}

\thanks{The first author is partially supported by a
Simons Foundation Travel Support for Mathematicians Grant.\\
The authors would like to thank Michael Frazier and Scott Rodney for illuminating discussions on the density of smooth functions in matrix weighted Sobolev spaces.}

\subjclass{42B25, 42B35, 46E35}

\keywords{variable Lebesgue spaces, matrix weights, averaging operators, convolution operators, Sobolev spaces}

\date{\today}

\begin{abstract}
  We extend the theory of matrix weights to the variable Lebesgue
  spaces.  The theory of matrix $\calA_p$ weights has attracted
  considerable attention beginning with the work of Nazarov, Treil,
  and Volberg in the 1990s. We extend this theory by generalizing the
  matrix $\calA_p$ condition to the variable exponent setting.
We prove boundedness of the convolution operator
$\vf \mapsto \phi\ast \vf$ for $\phi \in C_c^\infty(\Omega)$, and show
that the approximate identity defined using $\phi$ converges in matrix
weighted, variable Lebesgue spaces $\Lpp(W,\Omega)$ for $W$ in matrix
$\calA_\pp$. Our approach to this problem is through averaging
operators; these results are of interest in their own right.  As an
application of our work, we prove a version of the classical $H=W$
theorem for matrix weighted, variable exponent Sobolev spaces.
\end{abstract}
\maketitle
\section{Introduction}

In this paper, we develop the theory of matrix weights on the variable
Lebesgue spaces.  Variable Lebesgue spaces have been extensively
studied since the early 1990s, both for their intrinsic interest and
for their applications to PDEs and the calculus of variations.
See~\cite{cruz-uribe_variable_2013,diening_lebesgue_2011} for a
detailed description of this history and further references.  Weighted
norm inequalities have been considered in the variable Lebesgue spaces
by several authors.
In~\cite{Cruz-Uribe-FiorenzaNeugebauer-2012,MR3572271,cruz-uribe_ofs_weighted_2020}
the first author and his collaborators extended the theory of (scalar)
Muckenhoupt $A_p$ weights to the variable exponent setting, defining
the class $A_\pp$.  Matrix weights have also been studied since the
1990s, first in the work of Nazarov, Treil and
Volberg~\cite{treil_wavelets_1997,nazarov_hunt_1996,MR1423034} and
later in the work of Christ and Goldberg, and
others~\cite{goldberg_matrix_2003,MR1813604,
  bownik_extrapolation_2022,isralowitz_matrix_2019,
  cruz-uribe_matrix_2016,MR2104276,roudenko_matrix-weighted_2002,MR3689742}.
In this work, they generalized the scalar theory of Muckenhoput $A_p$
weights to matrix $\A_p$ weights acting on vector-valued functions.

The goal of this paper is to continue the development of both these
areas by studying matrix weights in the variable Lebesgue spaces.  We
define a new class of variable exponent matrix weights $\A_\pp$ that
generalize both the scalar $A_\pp$ weights and the matrix $\A_p$
weights.  We then prove that convolution operators are are bounded on the
matrix weighted, variable Lebesgue spaces, and approximate identities
converge in norm.   As an application, we then exend the classical
result of Meyers and Serrin~\cite{meyers_h_1964} that $H=W$--i.e., 
that smooth functions are dense in the Sobolev
space $W^{1,p}(\Omega)$--to matrix weighted, variable exponent Sobolev
spaces. As a corollary, we prove that smooth functions with compact support are also dense in matrix weighted, variable exponent Sobolev spaces.

In order to state our main results, we first give some of the basic
definitions. An exponent function is a Lebesgue measurable
function $\pp: \Omega \to [1,\infty]$. Denote the collection of all
exponent functions on $\Omega$ by $\Pp(\Omega)$. Given a set $E$, let
$p_+(E) = \esssup_E p(x)$ and $p_-(E) =\essinf_E p(x)$. Let
$p_+=p_+(\Omega)$ and $p_-= p_-(\Omega)$.

%
%

An exponent function $\pp$ is locally log-H\"{o}lder continuous,
denoted by $\pp \in LH_0(\Omega)$, if there exists a constant $C_0$
such that for all $x,y\in \Omega$, $|x-y|<1/2$,
\begin{align*}
|p(x)-p(y)| \leq \frac{C_0}{-\log(|x-y|)}.
\end{align*}
We say that $\pp$ is log-H\"{o}lder continuous at infinity, denoted by
$\pp \in LH_\infty(\Omega)$, if there exist constants $C_\infty$ and
$p_\infty$ such that for all $x\in \Omega$,
\begin{align*}
|p(x)-p_\infty| \leq \frac{C_\infty}{\log(e+|x|)}.
\end{align*}
If $\pp$ is log-H\"{o}lder continuous locally and at infinity, we will denote this by  $\pp\in LH(\Omega)$. 

Given $\pp\in \Pp(\Omega)$, define the modular associated with $\pp$ by 
\[\rho_\pp(f) = \int_{\Omega\bk \Omega_\infty} |f(x)|^{p(x)} \, dx + \|f\|_{L^\infty(\Omega_\infty)},\]
where $\Omega_\infty = \{x\in \Omega : p(x) = \infty\}$. Define $\Lpp(\Omega)$ to be the collection of Lebesgue measurable functions $f:\Omega \to \R$ such that
\[\|f\|_{\Lpp(\Omega)} = \inf\{\lambda>0: \rho_\pp(f/\lambda)\leq 1\}<\infty.\]

A weight is a locally integrable function $w: \Omega \to \R$ such that
$w(x)>0$ a.e. Given a weight $w$, define $\Lpp(w,\Omega)$ as the
collection of measurable functions such that 
\[ \|f\|_{\Lpp(w,\Omega)} := \|fw\|_{\Lpp(\Omega)}<\infty.\]
Given $\pp \in \Pp(\Omega)$ we say that a weight $w$ is in $A_\pp(\Omega)$ if
\begin{equation} \label{eqn:scalar-App}
[w]_{A_\pp} = \sup_{Q\subset \Omega} \|w\chi_Q \|_{\Lpp(\Omega)}
  \| w^{-1}\chi_Q\|_{\Lcpp(\Omega)} < \infty.
\end{equation}

\begin{remark}
  When $\pp=p$ is constant, $1\leq p<\infty$, then \eqref{eqn:scalar-App}
  reduces to a condition equivalent to the classical Muckenhoupt $A_p$
  condition, after making the transformation $w\mapsto w^{1/p}$.  In
  both the scalar and matrix weighted case, it is often more
  convenient to rewrite the Muckenhoupt condition in this way:
  see~\cite{Cruz-Uribe-FiorenzaNeugebauer-2012,bownik_extrapolation_2022}
  for further information.
\end{remark}

Define $\Lpp(\Omega;\R^d)$ to be  the set of vector-valued, measurable functions $\vf : \Omega \to \R^d$ such that 
\[\| \vf\|_{\Lpp(\Omega;\R^d)} := \| |\vf|\|_{\Lpp(\Omega)} <\infty.\]

A matrix function maps a set $\Omega \subseteq \R^n$ to the collection
of $d\times d$ matrices. A matrix function is measurable if each of
its components is a measurable function. Let $\calS_d$ denote the
collection of $d\times d$ matrices that are self-adjoint and positive
semi-definite. A matrix weight is a measurable matrix function
$W : \Omega \to \calS_d$ such that
$|W|_{\op} \in L^1_{\text{loc}}(\Omega)$ (where $|\cdot|_\op$ denotes
the operator norm of $W$), or equivalently, the
eigenvalues of $W$ are locally integrable functions. A matrix weight
is invertible if it is positive definite almost everywhere.
Given a matrix weight $W:\Omega \to \calS_d$, define $\Lpp(W,\Omega)$
to be the collection of vector-valued, measurable functions
$\vf:\Omega \to \R^d$ satisfying
\[\|\vf\|_{\Lpp(W,\Omega)} := \| |W\vf| \|_{\Lpp(\Omega)}<\infty.\]
Given $\pp \in \Pp(\Omega)$ we say that a matrix weight $W$ is in $\A_\pp(\Omega)$ if
\begin{equation} \label{eqn:Matrix-App}
[W]_{\calA_\pp} = \sup_{Q}|Q|^{-1} \big\|  \| |W(x)W^{-1}(\cdot)|_{\op}\chi_Q(\cdot)\|_{\Lcpp(\Omega)} \chi_Q(x)\big\|_{\Lpp_x(\Omega)}<\infty.
\end{equation}
Here, $\Lpp_x$ indicates that the integration in the norm is done with
respect to the $x$ variable.  As is the case for scalar weights, if
$\Omega = \R^n$, then we get an equivalent definition if we replace
cubes with balls. We will want to elide between balls and cubes on
more general domains. Therefore, given any matrix weight $W$ on a
domain $\Omega$, we will implicitly assume that it satisfies the
matrix $\calA_\pp$ condition on some larger domain $\Omega'$ and we
will suppress any reference to the domain, writing $\calA_\pp$ instead
of $\calA_\pp(\Omega)$. The problem of characterizing those domains
$\Omega$ such that every $W \in \calA_\pp(\Omega)$ is the restriction
of a matrix in $\calA_\pp(\R^n)$ is open.

\begin{remark}
Again when $\pp=p$, $1\leq p<\infty$, \eqref{eqn:Matrix-App} reduces
to the definition of matrix $\A_p$ given by
Roudenko~\cite{roudenko_matrix-weighted_2002}, $1<p<\infty$, and
Frazier and Roudenko~\cite{MR2104276}, $p=1$, with the transformation
$W\mapsto W^{1/p}$.
\end{remark}

\medskip

Our first main result is on the boundedness of convolution operators
and the convergence of approximate identities on $\Lpp(W,\Omega)$.
Let $\phi \in C_c^\infty(B(0,1))$ be a nonnegative, radially symmetric
and decreasing function with $\int_{\R^n} \phi(x)\, dx = 1$. Define
the approximate identity $\{\phi_t\}_{t>0}$ by
$\phi_t(x) = t^{-n} \phi(x/t)$.  It is well-known that $\phi_t \ast f$
converges to $f$ in $L^p(\Omega)$ as $t\rightarrow 0$,  $1\leq p <\infty$, and the same is
true in $L^p(w,\Omega)$ if $w$ is in the Muckenhoupt class $A_p$.
(See~\cite{turesson00}.)  In
\cite{cruz-uribe_matrix_2016}, the first author, Moen, and Rodney extended
this to matrix weighted spaces $L^p(W,\Omega)$ for weights
$W \in \calA_p$, $1\leq p<\infty$.  Here we prove the analogous result for
$\Lpp(W,\Omega)$.

\begin{theorem}\label{ConvolutionConvergence:theorem}
  Given $\pp \in \Pp(\Omega)$, suppose $1\leq p_- \leq p_+<\infty$ and
  $\pp\in LH(\Omega)$.  Fix  $W \in \calA_\pp$.   Let
  $\phi \in C_c^\infty(B(0,1))$ be a nonnegative, radially symmetric and
  decreasing function with $\int_{\R^n} \phi(x) \,dx=1$. For $t>0$, let
  $\phi_t(x) = t^{-n}\phi(x/t)$. Then for every $\vf\in \Lpp(W,\Omega)$,
\begin{equation}\label{ConvolutionBound}
  \sup_{t>0} \|\phi_t\ast \vf\|_{\Lpp(W,\Omega)}
  \leq C(n,\pp) [W]_{\calA_\pp} \|\vf\|_{\Lpp(W,\Omega)}.
\end{equation}
Moreover, we have that for every $\vf\in \Lpp(W,\Omega)$,
\begin{align}\label{ConvolutionConvergence:ineq}
\lim_{t\to 0} \|\phi_t \ast \vf - \vf\|_{\Lpp(W,\Omega)} = 0.
\end{align}
\end{theorem}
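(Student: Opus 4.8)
The plan is to reduce the matrix-weighted estimate to a scalar weighted estimate for an averaging operator, and then to pass from the averaging operator to the convolution. First, I would use the pointwise domination of the approximate identity by an averaging operator: since $\phi$ is radially decreasing and integrates to $1$, for each $x$ we have $|\phi_t\ast\vf(x)| \lesssim A_t|\vf|(x)$, where $A_t g(x) = t^{-n}\int_{B(x,t)} g(y)\,dy$ is the (continuous) averaging operator. Applying the matrix $W(x)$ before integrating, the key estimate \eqref{ConvolutionBound} follows if we can bound the ``matrix averaging operator'' $\vf \mapsto \left( x \mapsto W(x)\, t^{-n}\int_{B(x,t)} W^{-1}(y)\,W(y)\vf(y)\,dy \right)$ on $\Lpp(\Omega)$ uniformly in $t$. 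Here one inserts $W^{-1}(y)W(y)$ and estimates the kernel by $|W(x)W^{-1}(y)|_{\op}$, reducing matters to the \emph{scalar} kernel operator with kernel $K_t(x,y) = t^{-n}|W(x)W^{-1}(y)|_{\op}\chi_{B(x,t)}(y)$ acting on $g = |W\vf| \in \Lpp(\Omega)$. I expect the paper has already (earlier, or will in the next section) established boundedness of such kernel operators on $\Lpp(\Omega)$ under the matrix $\calA_\pp$ condition; the definition \eqref{eqn:Matrix-App} is precisely the testing/Sawyer-type condition that makes this work, using the $LH$ hypothesis to control the variable exponent via the standard machinery (e.g. the key estimate on averages over cubes for $\pp\in LH$, as in Diening's lemma). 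The uniformity in $t$ comes from the fact that the $\calA_\pp$ supremum in \eqref{eqn:Matrix-App} is over all cubes/balls, so a ball of radius $t$ is covered for every $t>0$ with the same constant $[W]_{\calA_\pp}$.

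For the convergence \eqref{ConvolutionConvergence:ineq}, I would use the standard density-plus-uniform-boundedness argument. By \eqref{ConvolutionBound} the operators $\vf \mapsto \phi_t\ast\vf$ are uniformly bounded on $\Lpp(W,\Omega)$, so it suffices to prove convergence on a dense subclass. The natural dense class is $C_c(\Omega;\R^d)$ (or continuous compactly supported vector fields), and density of such functions in $\Lpp(W,\Omega)$ should follow from the corresponding scalar fact — writing $g = |W\vf|$, approximating $g$ in $\Lpp(\Omega)$ by a continuous compactly supported scalar function and transferring back — together with the local integrability of the eigenvalues of $W$ and $W^{-1}$ (so that $W\in\calA_\pp$ forces both $|W|_{\op}$ and $|W^{-1}|_{\op}$ to be locally in the appropriate variable Lebesgue spaces). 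For $\vf$ continuous with compact support, $\phi_t\ast\vf \to \vf$ uniformly and with supports in a fixed compact set, so $|W(\phi_t\ast\vf - \vf)| \to 0$ pointwise and is dominated by $C|W|_{\op}\chi_K \in \Lpp(\Omega)$; the dominated convergence theorem in $\Lpp(\Omega)$ (valid since $p_+<\infty$) then gives $\|\phi_t\ast\vf - \vf\|_{\Lpp(W,\Omega)}\to 0$. An $\varepsilon/3$ argument combining this with \eqref{ConvolutionBound} and density finishes the proof.

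I expect the main obstacle to be the scalar kernel estimate underlying \eqref{ConvolutionBound}: showing that the operator with kernel $t^{-n}|W(x)W^{-1}(y)|_{\op}\chi_{B(x,t)}(y)$ is bounded on $\Lpp(\Omega)$ uniformly in $t$, with the matrix $\calA_\pp$ constant as the bound. This is where the variable-exponent subtleties enter: one cannot simply use duality and Hölder as in the constant-exponent case, because $\Lpp$ is not rearrangement-invariant and the associate space behaves well only under $LH$. The argument will need the local log-Hölder continuity to compare $\|\cdot\|_{\Lpp(Q)}$ on small cubes to powers of $|Q|$, and the log-Hölder-at-infinity condition to handle large scales; the matrix $\calA_\pp$ condition \eqref{eqn:Matrix-App} is tailored so that the ``diagonal'' testing estimate holds, and a Schur-type or dyadic decomposition of $B(x,t)$ then propagates it to the full operator. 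A secondary technical point is justifying that $\phi_t\ast\vf$ is well-defined (locally integrable) for $\vf\in\Lpp(W,\Omega)$, which again uses $|W^{-1}|_{\op}\in\Lcpp_{\mathrm{loc}}$ together with Hölder's inequality in $\Lpp$.
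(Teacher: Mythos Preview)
Your proposal is correct and follows essentially the same route as the paper: reduce \eqref{ConvolutionBound} to a uniform bound for convolution with $|B|^{-1}\chi_B$ (the paper's Lemma~\ref{ConvLocallyBdd}), then obtain \eqref{ConvolutionConvergence:ineq} by density of $C_c^\infty(\Omega;\R^d)$ in $\Lpp(W,\Omega)$ combined with the uniform bound and dominated convergence. Two small differences are worth noting. First, for the reduction the paper approximates $\phi_t$ from below by simple functions $\sum a_k|B_k|^{-1}\chi_{B_k}$ and invokes Fatou, whereas your direct pointwise domination $\phi_t\le \|\phi\|_\infty\, t^{-n}\chi_{B(0,t)}$ works equally well here since $\phi$ is bounded. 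Second, and more substantively, the mechanism behind the kernel bound you flag as the main obstacle is not a Schur test: the paper partitions $\R^n$ into a grid $\{Q_\vk\}$ at scale $t$, applies duality (Proposition~\ref{VectorDuality}) and H\"older together with the $\calA_\pp$ condition on each $3Q_\vk$, and then sums using \emph{property~$\calG$} (Definition~\ref{PropertyG:def}, Lemma~\ref{LH-G:lemma})---this summability over disjoint cubes is the specific consequence of $\pp\in LH$ that makes the variable-exponent estimate close up.
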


In the scalar case, Theorem~\eqref{ConvolutionBound} is usually proved
(for $p>1$)
by using the fact that $|\phi_t*f|$ is dominated pointwise by $Mf$,
where $M$ is the Hardy-Littlewood maximal operator.  However, the
maximal operator needs to be redefined to make sense in the matrix
weighted spaces (see~\cite{goldberg_matrix_2003,MR1813604}) and so is
not immediately applicable.
Therefore, we instead follow the approach introduced
in~\cite{cruz-uribe_matrix_2016} and first develop the theory of matrix
weighted inequalities for averaging operators, which we then apply to
estimate convolution operators.  We think that these results are of
independent interest.

\medskip

Our second main result is an application of Theorem
\ref{ConvolutionConvergence:theorem} to matrix weighted, vector-valued
Sobolev spaces. We briefly sketch the requisite definitions and defer
stating the complete definitions to Section~\ref{H=W:sec}. Given $\Omega \subseteq \R^n$, $\pp \in \Pp(\Omega)$, and $W : \Omega \to \calS_d$, the space
$\scrWpp(W,\Omega)$ is the collection of vector-valued functions
$\vf = (f_1, \ldots, f_d)^T \in \Lpp(W,\Omega)$ such that $|W D\vf|_{\op}\in \Lpp(\Omega)$, where $D\vf$ is the Jacobian matrix of $\vf$, i.e., a $d\times n$ matrix where $(D\vf)_{i,j} = \partial_j f_i$ for $i=1,\ldots, d$ and $j=1,\ldots, n$. The space $\scrHpp(W,\Omega)$ is the closure of
$C^\infty(\Omega;\R^d) \cap \scrWpp(W,\Omega)$.

\begin{theorem}\label{H=W:theorem}
Given $\pp \in \Pp(\Omega)$, suppose $1\leq p_-\leq p_+<\infty$ and
$\pp\in LH(\Omega)$.  Given $W\in \calA_\pp$,  $\scrWpp(W,\Omega) = \scrHpp(W,\Omega)$.
\end{theorem}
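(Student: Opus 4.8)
The plan is to follow the classical Meyers--Serrin argument, using Theorem~\ref{ConvolutionConvergence:theorem} as the engine for local approximation. The inclusion $\scrHpp(W,\Omega) \subseteq \scrWpp(W,\Omega)$ is essentially by definition, since $\scrWpp(W,\Omega)$ will be shown to be complete (one must verify this, but it follows from completeness of $\Lpp(\Omega)$ together with a standard argument that distributional derivatives pass to limits, using that convergence in $\Lpp(W,\Omega)$ implies convergence in $L^1_{\mathrm{loc}}$ on compact subsets by the $\calA_\pp$ condition and H\"older's inequality). So the real content is $\scrWpp(W,\Omega) \subseteq \scrHpp(W,\Omega)$: given $\vf \in \scrWpp(W,\Omega)$ and $\varepsilon > 0$, I must produce $\vg \in C^\infty(\Omega;\R^d) \cap \scrWpp(W,\Omega)$ with $\|\vf - \vg\|_{\scrWpp(W,\Omega)} < \varepsilon$.

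First I would set up the Meyers--Serrin partition of unity. Let $\Omega_k = \{x \in \Omega : |x| < k,\ \dist(x,\partial\Omega) > 1/k\}$, set $\Omega_0 = \Omega_{-1} = \emptyset$, and let $\{\psi_k\}$ be a smooth partition of unity subordinate to the cover $\{U_k\}$ with $U_k = \Omega_{k+1} \setminus \overline{\Omega_{k-1}}$, so each $U_k$ is compactly contained in $\Omega$ and the cover is locally finite. Then $\vf = \sum_k \psi_k \vf$, with each $\psi_k \vf$ compactly supported in $U_k$ and belonging to $\scrWpp(W,\Omega)$ (the Leibniz rule gives $D(\psi_k \vf) = \psi_k D\vf + \vf (D\psi_k)^T$, and $|W \vf (D\psi_k)^T|_{\op} \leq |D\psi_k| |W\vf|$, which is controlled). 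Now I apply the convolution convergence result: for each $k$, choose $t_k > 0$ small enough that $\supp(\phi_{t_k} \ast (\psi_k \vf)) \subset U_k$ (possible since $\supp(\psi_k\vf)$ is compact in the open set $U_k$) and, crucially, small enough that
\[
\|\phi_{t_k}\ast(\psi_k\vf) - \psi_k\vf\|_{\Lpp(W,\Omega)} + \sum_{j=1}^n \|\phi_{t_k}\ast \partial_j(\psi_k\vf) - \partial_j(\psi_k\vf)\|_{\Lpp(W,\Omega)} < \frac{\varepsilon}{2^{k+1}}.
\]
Here I use that $\partial_j(\phi_{t_k}\ast(\psi_k\vf)) = \phi_{t_k}\ast \partial_j(\psi_k\vf)$ for the mollification of a Sobolev function with compact support, together with Theorem~\ref{ConvolutionConvergence:theorem} applied componentwise (or directly to the vector-valued function) to the $\Lpp(W,\Omega)$ functions $\psi_k\vf$ and $\partial_j(\psi_k\vf)$. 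Setting $\vg = \sum_k \phi_{t_k}\ast(\psi_k\vf)$, local finiteness of the cover makes this a locally finite sum, hence $\vg \in C^\infty(\Omega;\R^d)$, and summing the estimates gives $\|\vf - \vg\|_{\Lpp(W,\Omega)} + \|D\vf - D\vg\|_{\Lpp(W,\Omega)} < \varepsilon$, so in particular $\vg \in \scrWpp(W,\Omega)$ and approximates $\vf$.

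The main obstacle is not the combinatorial skeleton, which is classical, but rather two technical points that must be checked carefully in this weighted variable-exponent vector-valued setting. The first is the completeness of $\scrWpp(W,\Omega)$ and the fact that $\Lpp(W,\Omega)$-convergence controls distributional derivatives: one needs that for any cube $Q \Subset \Omega$, $\|\vf\|_{L^1(Q;\R^d)} \lesssim_Q \|\vf\|_{\Lpp(W,\Omega)}$, which follows from writing $|\vf| = |W^{-1}W\vf| \leq |W^{-1}|_{\op}|W\vf|$, applying the generalized H\"older inequality in variable Lebesgue spaces, and using that the $\calA_\pp$ condition forces $\|W^{-1}\chi_Q\|_{\Lcpp}$ and $\|\chi_Q\|_{\Lpp}$ to be finite. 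The second is verifying the commutation $\partial_j(\phi_t \ast \vh) = \phi_t \ast \partial_j \vh$ when $\vh = \psi_k\vf \in \scrWpp(W,\Omega)$ has compact support in $\Omega$; this is where one needs $\vh$ and its weak derivatives to be genuinely in $L^1_{\mathrm{loc}}$ (again supplied by the $\calA_\pp$ H\"older estimate above) so that the standard distributional identity applies, and one needs that the convolution is taken over a neighborhood small enough to stay inside $\Omega$. Once these are in place, the argument closes, and the corollary on density of $C_c^\infty(\Omega;\R^d)$ follows by an additional cutoff argument (multiplying $\vg$ by smooth cutoffs exhausting $\Omega$, when $\vf$ itself has suitable decay, or more precisely by approximating in the exhaustion), which I would state separately.
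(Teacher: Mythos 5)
Your proposal is correct and follows essentially the same route as the paper: the Meyers--Serrin exhaustion $\Omega_k$, the locally finite partition of unity $\{\psi_k\}$, mollification of each piece $\psi_k\vf$ with a mollifier parameter $t_k$ chosen small enough using Theorem~\ref{ConvolutionConvergence:theorem} to control both $\|\psi_k\vf - \phi_{t_k}\ast(\psi_k\vf)\|_{\Lpp(W,\Omega)}$ and $\|\partial_j(\psi_k\vf) - \phi_{t_k}\ast\partial_j(\psi_k\vf)\|_{\Lpp(W,\Omega)}$ with summable error, and then summing. The technical points you flag (completeness of $\scrWpp(W,\Omega)$ via local $L^1$ control from the $\calA_\pp$ condition, and commutation of mollification with weak derivatives) are exactly the ones the paper handles, with completeness stated separately as Theorem~\ref{WBanach}.
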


\begin{remark}
In~\cite{isralowitz_matrix_2019}, as part of their study of
  solutions to systems of degenerate elliptic questions, the authors
  stated a version of  Theorem~\ref{H=W:theorem} when $\pp=p$, $1\leq p <\infty$, without proof.  
\end{remark}

\medskip

We also consider the scalar Sobolev space $\scrWpp(v,W,\Omega)$, where
$v=|W|_\op$, defined to be the collection of $f\in \Lpp(v, \Omega)$ such that $\nabla f\in \Lpp(W,\Omega)$. Let $\scrHpp(v,W,\Omega)$ denote the closure of
$C^\infty(\Omega)$ in $\scrWpp(v,W,\Omega)$.  

\begin{theorem}\label{H=W-scalar:theorem}
Given $\pp \in \Pp(\Omega)$, suppose that  $1\leq p_-\leq p_+<\infty$ and
$\pp\in LH(\Omega)$. Given an $n\times n$ matrix weight $W\in \calA_\pp$, $\scrWpp(v,W,\Omega) = \scrHpp(v,W,\Omega)$.
\end{theorem}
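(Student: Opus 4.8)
The plan is to run the Meyers--Serrin argument exactly as in the proof of Theorem~\ref{H=W:theorem}, the only genuinely new features being that $f$ is weighted by the \emph{scalar} weight $v=|W|_\op$ while its gradient is weighted by the matrix weight $W$ acting on the derivative index. The inclusion $\scrHpp(v,W,\Omega)\subseteq\scrWpp(v,W,\Omega)$ is immediate once one checks that $\scrWpp(v,W,\Omega)$ is a Banach space, which is routine. So the content is the reverse inclusion: given $f\in\scrWpp(v,W,\Omega)$ and $\varepsilon>0$, produce $g\in C^\infty(\Omega)\cap\scrWpp(v,W,\Omega)$ with $\|f-g\|_{\scrWpp(v,W,\Omega)}<\varepsilon$.

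The two analytic inputs are both special cases of Theorem~\ref{ConvolutionConvergence:theorem}. From $W(x)=W(x)W^{-1}(y)W(y)$ and submultiplicativity of the operator norm we get $|W(x)W^{-1}(y)|_\op\ge v(x)/v(y)$ for a.e.\ $x,y$; inserting this into \eqref{eqn:Matrix-App} shows that the $1\times1$ matrix weight with sole entry $v$ belongs to $\calA_\pp$, with characteristic at most $[W]_{\calA_\pp}$. Applying Theorem~\ref{ConvolutionConvergence:theorem} to this weight (the case $d=1$, where $\Lpp(\cdot,\Omega)$ is the scalar space $\Lpp(v,\Omega)$) yields $\phi_t\ast h\to h$ in $\Lpp(v,\Omega)$ for every $h\in\Lpp(v,\Omega)$, and applying it to $W$ itself yields $\phi_t\ast\vh\to\vh$ in $\Lpp(W,\Omega)$ for every $\R^n$-valued $\vh\in\Lpp(W,\Omega)$.

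Now choose an exhaustion $\Omega_1\Subset\Omega_2\Subset\cdots$ of $\Omega$, set $U_k=\Omega_{k+1}\setminus\overline{\Omega_{k-1}}$ (a locally finite open cover), and take a subordinate smooth partition of unity $\{\psi_k\}$ with $\psi_k\in C_c^\infty(U_k)$, $0\le\psi_k\le1$, $\sum_k\psi_k\equiv1$ on $\Omega$. For each $k$, $\psi_k f\in\Lpp(v,\Omega)$ and, by the product rule, $\nabla(\psi_k f)=\psi_k\nabla f+f\nabla\psi_k\in\Lpp(W,\Omega)$: the first term is pointwise dominated by $|W\nabla f|$ since $0\le\psi_k\le1$, and for the second, $|W(f\nabla\psi_k)|\le\|\nabla\psi_k\|_\infty\,v\,|f|\,\chi_{U_k}\in\Lpp(\Omega)$ because $vf\in\Lpp(\Omega)$. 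By the two convergence statements above---and since $\phi_t\ast$ commutes with the distributional gradient---we may pick $t_k>0$ so small that $\supp\bigl(\phi_{t_k}\ast(\psi_k f)\bigr)\subseteq U_k$ and $\bigl\|\phi_{t_k}\ast(\psi_k f)-\psi_k f\bigr\|_{\scrWpp(v,W,\Omega)}<\varepsilon\,2^{-k}$. Then $g=\sum_k\phi_{t_k}\ast(\psi_k f)$ is a locally finite sum of smooth functions, so $g\in C^\infty(\Omega)$; and $g-f=\sum_k\bigl(\phi_{t_k}\ast(\psi_k f)-\psi_k f\bigr)$ has summable $\scrWpp(v,W,\Omega)$-norms with sum $<\varepsilon$, so (by completeness of $\scrWpp(v,W,\Omega)$) the series converges there, its sum agrees a.e.\ with $g-f$, and hence $g\in\scrWpp(v,W,\Omega)$ with $\|f-g\|_{\scrWpp(v,W,\Omega)}<\varepsilon$.

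The heavy lifting was already done in Theorem~\ref{ConvolutionConvergence:theorem}; here only two small points are new relative to Theorem~\ref{H=W:theorem}. The first is checking $v=|W|_\op\in\calA_\pp$ (as a $1\times1$ weight), which the submultiplicativity estimate settles. The second---and the place I expect to need the most care---is the lower-order term $f\nabla\psi_k$: it is precisely the identity $v=|W|_\op$ that lets $|W(f\nabla\psi_k)|$ be absorbed into $vf\in\Lpp(\Omega)$, so that a single choice of $t_k$ simultaneously controls both halves of the $\scrWpp$-norm, which live in the two different weighted spaces $\Lpp(v,\Omega)$ and $\Lpp(W,\Omega)$. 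The remaining items---the exhaustion and partition of unity, the commutation of $\phi_t\ast$ with $\nabla$, and the passage from the locally finite series to its distributional limit---are standard.
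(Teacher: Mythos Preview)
Your proposal is correct and follows essentially the same Meyers--Serrin argument the paper sketches in the remark following the proof of Theorem~\ref{H=W:theorem}: partition of unity, mollify each piece, and invoke Theorem~\ref{ConvolutionConvergence:theorem} separately in $\Lpp(v,\Omega)$ (the case $d=1$) and in $\Lpp(W,\Omega)$. Your direct submultiplicativity bound $|W(x)W^{-1}(y)|_\op\ge v(x)/v(y)$ to show $v\in\calA_\pp$ is a clean alternative to the paper's Proposition~\ref{opNormApdot}, and your explicit check that the cross term $f\nabla\psi_k$ lands in $\Lpp(W,\Omega)$ via $|W(f\nabla\psi_k)|\le\|\nabla\psi_k\|_\infty\,v|f|$ makes precise the one point the paper leaves implicit.
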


\begin{remark}
  When $\pp=p$, $1\leq p <\infty$, Theorem~\ref{H=W-scalar:theorem}
  was proved in~\cite{cruz-uribe_matrix_2016}.
\end{remark}

As a corollary to these two results, we get the following.

\begin{corollary} \label{cor:smooth-dense-Rn}
  Given $\pp \in \Pp(\R^n)$, suppose that $1\leq p_-\leq p_+<\infty$ and
$\pp\in LH(\R^n)$.  Given a $d\times d$ matrix weight $W\in \calA_\pp$, $C_c^\infty(\R^n;\R^d)$
is dense in $\scrWpp(W,\R^n)$; similarly, given an $n\times n$ matrix weight $W\in \A_\pp$,  $C_c^\infty(\R^n)$ is dense in
$\scrWpp(v,W,\R^n)$. 
\end{corollary}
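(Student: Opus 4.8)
The plan is to combine Theorems \ref{H=W:theorem} and \ref{H=W-scalar:theorem} with a standard cutoff argument, using the convolution bound \eqref{ConvolutionBound} to control the smoothing step. Let me focus on the vector-valued case $\scrWpp(W,\R^n)$; the scalar case $\scrWpp(v,W,\R^n)$ is entirely analogous, replacing $D\vf$ by $\nabla f$ and the norm $\|\cdot\|_{\Lpp(W)}$ by $\|\cdot\|_{\Lpp(v)}$ on the function itself. Fix $\vf \in \scrWpp(W,\R^n)$. By Theorem \ref{H=W:theorem}, $\scrWpp(W,\R^n) = \scrHpp(W,\R^n)$, so given $\varepsilon > 0$ there is some $\vg \in C^\infty(\R^n;\R^d) \cap \scrWpp(W,\R^n)$ with $\|\vf - \vg\|_{\scrWpp(W,\R^n)} < \varepsilon/2$. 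It therefore suffices to approximate a smooth function $\vg \in \scrWpp(W,\R^n)$ by compactly supported smooth functions in the $\scrWpp(W,\R^n)$ norm; by the triangle inequality this gives the claim.

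For the cutoff step, fix $\eta \in C_c^\infty(\R^n)$ with $\eta \equiv 1$ on $B(0,1)$, $0 \le \eta \le 1$, $\supp \eta \subseteq B(0,2)$, and set $\eta_R(x) = \eta(x/R)$, so $|\nabla \eta_R| \le C/R$. Then $\vg_R := \eta_R \vg \in C_c^\infty(\R^n;\R^d)$, and by the product rule $D\vg_R = \eta_R D\vg + \vg \otimes \nabla \eta_R$. We estimate
\begin{align*}
\|\vg - \vg_R\|_{\Lpp(W,\R^n)} &= \| |W(1-\eta_R)\vg| \|_{\Lpp} \le \| |W\vg| \chi_{\{|x| \ge R\}} \|_{\Lpp},\\
\|D\vg - D\vg_R\|_{\Lpp(W,\R^n)} &\le \| |W(1-\eta_R)D\vg| \|_{\Lpp} + \tfrac{C}{R}\| |W\vg| \chi_{\{R \le |x| \le 2R\}} \|_{\Lpp}.
\end{align*}
Since $|W\vg|, |WD\vg| \in \Lpp(\R^n)$, dominated convergence for the norm (using that the tails $\chi_{\{|x|\ge R\}}$ shrink to the null set and that $\Lpp(\R^n)$ has absolutely continuous norm when $p_+ < \infty$) forces the right-hand sides to zero as $R \to \infty$; the factor $1/R$ only helps. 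Choose $R$ so that $\|\vg - \vg_R\|_{\scrWpp(W,\R^n)} < \varepsilon/2$. Then $\|\vf - \vg_R\|_{\scrWpp(W,\R^n)} < \varepsilon$, completing the proof, since one can further mollify $\vg_R$ by $\phi_t \ast \vg_R$ and invoke \eqref{ConvolutionConvergence:ineq} applied to $\vg_R$ and $D\vg_R = \phi_t \ast D\vg_R$ (convolution commutes with differentiation) if one insists on staying inside $C_c^\infty$ after the cutoff — but $\eta_R \vg$ is already in $C_c^\infty$ when $\vg$ is smooth, so this extra step is not strictly needed.

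The main obstacle is the justification that $\|h\,\chi_{\{|x|\ge R\}}\|_{\Lpp(\R^n)} \to 0$ for $h \in \Lpp(\R^n)$, i.e., absolute continuity of the norm; this is standard for $p_+ < \infty$ (see \cite{cruz-uribe_variable_2013,diening_lebesgue_2011}) but should be cited carefully, as it is exactly where the hypothesis $p_+ < \infty$ is used. A secondary point worth a sentence is that in the scalar case one must separately control $\|f - f_R\|_{\Lpp(v,\R^n)}$ and $\|\nabla f - \nabla f_R\|_{\Lpp(W,\R^n)}$, but since $v = |W|_\op$ and the same tail/product-rule estimates apply verbatim with $v$ in place of $W$ for the zeroth-order term, no new difficulty arises. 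One should also note that $\vg_R \in \scrWpp(W,\R^n)$ is automatic since it is a smooth compactly supported modification of a function already in the space and $W$ is locally integrable.
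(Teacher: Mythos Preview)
Your proposal is correct and follows essentially the same approach as the paper: first invoke Theorem~\ref{H=W:theorem} to pass to a smooth function in $\scrWpp(W,\R^n)$, then apply a standard cutoff and use the product rule together with absolute continuity of the $\Lpp$ norm (equivalently, the dominated convergence theorem in variable Lebesgue spaces, \cite[Theorem~2.62]{cruz-uribe_variable_2013}) to send the tail terms to zero. The paper's version is organized slightly differently---it works component-by-component with the $\partial_j$ form of the Sobolev norm rather than with $D\vg$ directly---but the substance is identical, and your remark that the extra mollification step is unnecessary (since $\eta_R\vg$ is already $C_c^\infty$) is exactly right.
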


\medskip

The remainder of this paper is organized as follows. In
Section~\ref{Preliminaries:sec}, we state some results about variable
Lebesgue spaces needed in our proofs. In
Section~\ref{MatrixWeights:sec}, we prove several results about matrix
weighted spaces. In Section~\ref{MatrixApp}, we characterize matrix
$\A_\pp$ in terms of averaging operators;  we also show a
connection between matrix $\A_\pp$ weights and scalar $A_\pp$ weights
that we use in the subsequent sections. In
Section~\ref{ConvConvergence:sec}, we prove
Theorem~\ref{ConvolutionConvergence:theorem}. Finally, in Section
\ref{H=W:sec}, we prove Theorems~\ref{H=W:theorem}
and~\ref{H=W-scalar:theorem} and Corollary~\ref{cor:smooth-dense-Rn}.

Throughout this paper, we will use the following notation. The symbol
$n$ will always denote the dimension of the Euclidean space $\R^n$ and
we denote the usual Euclidean norm by $|\cdot|$. We will use $d$ to
denote the dimension of matrix and vector-valued functions. We will
take the domain of our functions to be a Lebesgue measurable set
$\Omega\subseteq \R^n$. The set $\Omega$ need not, \textit{a priori},
be bounded. $C$ will denote a constant that may vary in value from
line to line and which will depend on underlying parameters. If we
want to specify the dependence, we will write, for instance,
$C(d)$. If the value of the constant is not important, we will often
write $A \lesssim B$ instead of $A \leq cB$ for some constant $c$. We
will also use the convention that $1/\infty=0$.
\section{Preliminaries}\label{Preliminaries:sec}

In this section, we state some important lemmas and propositions about
variable Lebesgue spaces. We refer to
\cite{cruz-uribe_variable_2013} and \cite{diening_lebesgue_2011} for
proofs and more information about these spaces. We begin with H\"{o}lder's
inequality. Given $\pp \in \Pp(\Omega)$, let $\cpp$ be the conjugate exponent function to $\pp$
defined by $\frac{1}{p(x)}+\frac{1}{p'(x)}=1$, $x\in \Omega$.
\begin{lemma}\cite[Theorem 2.26]{cruz-uribe_variable_2013}\label{Holder}
Given $\pp \in \Pp(\Omega)$, if $f \in \Lpp(\Omega)$ and $g \in
\Lcpp(\Omega)$, then $fg \in L^1(\Omega)$ and 
\[\int_\Omega |f(x) g(x) |dx \leq 4 \|f\|_{\Lpp}\|g\|_{\Lcpp}.\]
\end{lemma}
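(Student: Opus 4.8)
The plan is to reduce the statement to the pointwise Young inequality $ab \le \frac{a^{p}}{p}+\frac{b^{p'}}{p'}$, applied with the exponent $p=p(x)$ varying from point to point, and to treat the degenerate sets $\{p(x)=1\}$ and $\{p(x)=\infty\}$ by hand, since Young's inequality is unavailable there. All quantities will be taken as absolute values, so $\int_\Omega |fg|$ is a well-defined element of $[0,\infty]$ by Tonelli, and once we bound it by a finite number we simultaneously get $fg\in L^1(\Omega)$.

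First I would dispose of trivialities: if $\|f\|_\Lpp=0$ or $\|g\|_\Lcpp=0$ the corresponding function vanishes a.e.\ and there is nothing to prove; otherwise both norms are finite and positive, and since $f\in\Lpp$, $g\in\Lcpp$ both functions are finite a.e. Fix $\varepsilon>0$ and, using that the Luxemburg norm is an infimum, choose $\lambda<\|f\|_\Lpp+\varepsilon$ and $\mu<\|g\|_\Lcpp+\varepsilon$ with $\rho_\pp(f/\lambda)\le 1$ and $\rho_\cpp(g/\mu)\le 1$. From $\rho_\pp(f/\lambda)\le 1$ I read off two things: $\int_{\Omega\bk\Omega_\infty}(|f|/\lambda)^{p(x)}\,dx\le 1$, and, because the term $\|f/\lambda\|_{L^\infty(\Omega_\infty)}$ is one nonnegative summand of a sum that is $\le 1$, $|f|\le\lambda$ a.e.\ on $\Omega_\infty$. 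Symmetrically for $g,\mu$, with $\Omega_\infty$ replaced by $\Omega_1:=\{x:p(x)=1\}=\{x:p'(x)=\infty\}$.

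Next I would partition $\Omega=\Omega_1\cup\Omega_*\cup\Omega_\infty$ with $\Omega_*=\{x:1<p(x)<\infty\}$ and bound $\frac{|f(x)g(x)|}{\lambda\mu}$ pointwise on each piece. On $\Omega_*$, Young's inequality together with $\tfrac{1}{p(x)},\tfrac{1}{p'(x)}\le 1$ gives the bound $(|f(x)|/\lambda)^{p(x)}+(|g(x)|/\mu)^{p'(x)}$; on $\Omega_1$, using $|g|\le\mu$ a.e.\ and $p(x)=1$, the bound is $|f(x)|/\lambda=(|f(x)|/\lambda)^{p(x)}$; on $\Omega_\infty$, using $|f|\le\lambda$ a.e.\ and $p'(x)=1$, the bound is $|g(x)|/\mu=(|g(x)|/\mu)^{p'(x)}$. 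Integrating and recombining the $f$-terms over $\Omega_1\cup\Omega_*=\Omega\bk\Omega_\infty$ and the $g$-terms over $\Omega_*\cup\Omega_\infty=\Omega\bk\Omega_1$,
\[
\int_\Omega\frac{|f(x)g(x)|}{\lambda\mu}\,dx
\;\le\;\int_{\Omega\bk\Omega_\infty}\Big(\tfrac{|f(x)|}{\lambda}\Big)^{p(x)}dx
\;+\;\int_{\Omega\bk\Omega_1}\Big(\tfrac{|g(x)|}{\mu}\Big)^{p'(x)}dx
\;\le\;\rho_\pp(f/\lambda)+\rho_\cpp(g/\mu)\;\le\;2,
\]
since the two integrals on the right are exactly the non-$L^\infty$ parts of $\rho_\pp(f/\lambda)$ and $\rho_\cpp(g/\mu)$ respectively. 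Hence $\int_\Omega|fg|\le 2\lambda\mu\le 2(\|f\|_\Lpp+\varepsilon)(\|g\|_\Lcpp+\varepsilon)$, and letting $\varepsilon\to 0$ gives $\int_\Omega|fg|\le 2\|f\|_\Lpp\|g\|_\Lcpp\le 4\|f\|_\Lpp\|g\|_\Lcpp<\infty$.

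The step I would treat most carefully is the handling of the two degenerate exponent sets: Young's inequality genuinely fails at $p(x)\in\{1,\infty\}$, so on $\Omega_\infty$ and $\Omega_1$ one must substitute the pointwise bounds $|f|\le\lambda$ and $|g|\le\mu$ extracted from the $L^\infty$ summand of the modular, and one must keep careful track of which modular each leftover integral belongs to when recombining the pieces. Everything else — finiteness of $f,g$ a.e., the $\varepsilon$-argument that avoids assuming the modular equals $1$ at the norm, and the bookkeeping $\Omega_1\cup\Omega_*=\Omega\bk\Omega_\infty$, $\Omega_*\cup\Omega_\infty=\Omega\bk\Omega_1$ — is routine.
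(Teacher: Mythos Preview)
Your argument is correct and is essentially the standard proof of H\"older's inequality in variable Lebesgue spaces---the pointwise Young inequality on $\{1<p(x)<\infty\}$ combined with the $L^\infty$ bounds on the two degenerate sets, then recombined into the modulars. In the paper this lemma is not proved at all: it is quoted from \cite[Theorem~2.26]{cruz-uribe_variable_2013} as background, so there is no in-paper proof to compare against; your write-up in fact recovers the sharper constant $2$ (the constant $4$ in the stated inequality is what appears in the cited reference and is not optimal for the basic H\"older inequality itself).
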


In our hypotheses, we assume log-H\"{o}lder continuity primarily
because it implies the so-called property $\calG$, which allows us to
assemble the norm of a function from its pieces over disjoint
collections of cubes. Berezhnoi~\cite{Berezhnoi-1999} introduced this condition 
to prove results about the boundedness of the
Hardy-Littlewood maximal operator between ideal Banach
spaces. Kopaliani~\cite{Kopaliani-2010} first considered this condition in the variable Lebesgue spaces.
Diening~\cite{diening_lebesgue_2011}
used property $\calG$ to study maximal operators defined using variable
Lebesgue space norms.  A bilinear version of property $\calG$ played a
role in~\cite{cruz-uribe_ofs_weighted_2020}.  
\begin{definition}\label{PropertyG:def}
An exponent $\pp \in \Pp(\Omega)$ satisfies property $\calG$ on
$\Omega$, denoted $\pp \in \calG(\Omega)$, if there exists a constant
$C$ such that for all $f \in \Lpp(\Omega)$, $g\in \Lcpp(\Omega)$, and all pairwise disjoint collections $\calQ$ of cubes $Q\subseteq \Omega$,
\[ \sum_{Q\in \calQ} \|\chi_Q f\|_{\Lpp(\Omega)} \|\chi_Q g\|_{\Lcpp(\Omega)} \leq C \|f\|_{\Lpp(\Omega)} \|g\|_{\Lcpp(\Omega)}.\]
The smallest constant $C$ is called the $\calG$-constant of $\pp$ on $\Omega$. 
\end{definition}
\begin{lemma}\label{LH-G:lemma}
If $\pp\in LH(\Omega)$, then $\pp \in \calG(\Omega)$ and the $\calG$-constant depends only on $n$ and the log-H\"{o}lder constants of $\pp$.
\end{lemma}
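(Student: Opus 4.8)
The plan is to prove Lemma~\ref{LH-G:lemma} by reducing property $\calG$ to a known Banach-function-space condition. First I would recall that property $\calG$ for a couple $(\Lpp,\Lcpp)$ is intimately tied to the boundedness of certain maximal-type or averaging operators, and that the key analytic input is that $\pp \in LH(\Omega)$ implies that the averaging operators $A_Q f = \frac{1}{|Q|}\int_Q f\,dx \cdot \chi_Q$ are uniformly bounded on $\Lpp(\Omega)$, together with a ``Diening-type'' pointwise estimate: for $\pp \in LH(\Omega)$ there is a constant $C$ such that for every cube $Q$ and every $f$ with $\|f\|_{\Lpp(\Omega)} \le 1$,
\[
  \left(\frac{1}{|Q|}\int_Q |f|\,dx\right)\chi_Q(x) \le C\left(M(|f|^{p_-(Q)})(x)^{1/p_-(Q)} + \text{lower order terms}\right),
\]
or more usefully the localized estimate $\|\chi_Q\|_{\Lpp}\|\chi_Q\|_{\Lcpp} \approx |Q|$ uniformly over cubes (this is precisely the content of $\pp \in LH(\Omega)$; see~\cite[Chapter~4]{cruz-uribe_variable_2013} or~\cite{diening_lebesgue_2011}).

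The heart of the argument is then the following. Given a pairwise disjoint family $\calQ$ of cubes in $\Omega$, and $f \in \Lpp(\Omega)$, $g \in \Lcpp(\Omega)$, I want to bound $\sum_{Q\in\calQ}\|\chi_Q f\|_{\Lpp}\|\chi_Q g\|_{\Lcpp}$. The standard route (Diening) is: since the cubes are disjoint, one can apply the generalized Hölder inequality (Lemma~\ref{Holder}) cube by cube to get $\|\chi_Q f\|_{\Lpp}\|\chi_Q g\|_{\Lcpp} \gtrsim \int_Q |fg|\,dx$, but this is the wrong direction, so instead one exploits the ``norm conjugate formula'' and reduces to showing that the operator $T f = \sum_{Q} \frac{\|\chi_Q f\|_{\Lpp}}{\|\chi_Q\|_{\Lpp}}\chi_Q$ (a block-averaging operator adapted to $\pp$) is bounded on $\Lpp(\Omega)$ with constant depending only on $n$ and the $LH$ constants, and dually that the corresponding operator on $\Lcpp$ is bounded. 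Boundedness of such block operators on $\Lpp$ under $\pp \in LH$ is established in~\cite{diening_lebesgue_2011} (it is essentially equivalent to the boundedness of $M$, but here one only needs the easier ``local'' version since the blocks are disjoint, so no Besicovitch covering loss beyond a dimensional constant). Pairing this $\Lpp$-bound with its $\Lcpp$-counterpart via Hölder's inequality in the sequence variable $Q \in \calQ$ yields
\[
  \sum_{Q\in\calQ}\|\chi_Q f\|_{\Lpp}\|\chi_Q g\|_{\Lcpp}
  \le \Big\| \sum_Q \tfrac{\|\chi_Q f\|_{\Lpp}}{\|\chi_Q\|_{\Lpp}}\chi_Q \Big\|_{\Lpp}
      \Big\| \sum_Q \tfrac{\|\chi_Q g\|_{\Lcpp}}{\|\chi_Q\|_{\Lcpp}}\chi_Q \Big\|_{\Lcpp}
  \lesssim \|f\|_{\Lpp}\|g\|_{\Lcpp},
\]
where in the first inequality I used $\|\chi_Q\|_{\Lpp}\|\chi_Q\|_{\Lcpp}\approx|Q|\approx \int_Q \chi_Q$ to recombine, plus disjointness; chasing the constants shows they depend only on $n$ and the log-Hölder constants of $\pp$, as claimed.

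Since the statement is quoted from the literature, an alternative—and probably the intended—treatment is simply to cite it: property $\calG$ under $LH$ is proved in Diening's work and recorded as a lemma in~\cite{diening_lebesgue_2011}; the quantitative dependence of the $\calG$-constant on $n$ and the $LH$ data is implicit there and also discussed in~\cite{cruz-uribe_ofs_weighted_2020}. I expect the main obstacle, if one writes out a self-contained proof, to be the bookkeeping of constants through the block-operator boundedness and the passage between $\|\chi_Q\|_{\Lpp}\|\chi_Q\|_{\Lcpp}$ and $|Q|$—one must check that every estimate invoked ($\Lpp$-boundedness of disjoint block averages, the local $LH$ estimate on $\|\chi_Q\|_{\Lpp}$, the duality pairing) has constants controlled purely by $n$ and $C_0,C_\infty,p_\infty$, with no hidden dependence on $\Omega$ or on $p_\pm$ beyond what the $LH$ conditions already encode. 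For the purposes of this paper it suffices to give a one-line proof: apply~\cite[Lemma~4.7.1 and the surrounding discussion]{diening_lebesgue_2011} (or the corresponding statement in~\cite{cruz-uribe_variable_2013}), noting that the cited constants are of exactly the asserted form.
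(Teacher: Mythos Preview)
The paper's proof is considerably simpler than either of your routes. The key observation---which the paper states explicitly just before the proof---is that the result is already known for $\Omega=\R^n$ from \cite[Theorem~7.3.22]{diening_lebesgue_2011}, and the only thing left to prove is the passage to a general domain $\Omega$. The paper does this in one stroke: extend $\pp$ from $\Omega$ to all of $\R^n$ via Lemma~\ref{Extension:lemma} (which preserves $p_\pm$ and the $LH$ constants), extend $f$ and $g$ by zero outside $\Omega$, and observe that for cubes $Q\subset\Omega$ every norm appearing in the $\calG$ inequality is unchanged by these extensions. The $\R^n$ result then applies verbatim and gives the desired bound with constants depending only on $n$ and the $LH$ data. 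Your worry about ``no hidden dependence on $\Omega$'' is exactly the right concern, and the extension lemma is how the paper resolves it; you identified the issue but did not supply this step.

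Your block-operator sketch is a legitimate, genuinely different alternative---essentially a partial reconstruction of Diening's own proof of the $\R^n$ case---and the chain you wrote can be made rigorous (the displayed inequality follows from H\"older once one uses $\|\chi_Q\|_{\Lpp}\|\chi_Q\|_{\Lcpp}\approx|Q|$ and disjointness to pass through $\int FG$). But it is much heavier than what is needed here, and the ingredients you invoke (boundedness of block-averaging operators, the $\|\chi_Q\|_{\Lpp}$ estimate) are themselves typically proved on $\R^n$, so the domain issue would resurface anyway. Your ``one-line'' citation approach is closer in spirit to the paper's, but as written it does not address the $\Omega$-versus-$\R^n$ reduction; without Lemma~\ref{Extension:lemma}, the cited statement does not directly apply on an arbitrary $\Omega$.
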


Lemma \ref{LH-G:lemma} is proved in~\cite{diening_lebesgue_2011} for
$\Omega = \R^n$. We extend their lemma to any set
$\Omega \subseteq \R^n$ using the following extension property of
log-H\"{o}lder continuous exponent functions.
\begin{lemma}\cite[Lemma 2.4]{cruz-uribe_variable_2013}\label{Extension:lemma}
  Given a set $\Omega \subseteq \R^n$ and $\pp \in LH(\Omega)$, there
  exists a function $\tilde{p}(\cdot)\in LH(\R^n)$ such that
  $\tilde{p}(x) = p(x)$ for $x\in \Omega$, $\tilde{p}_- = p_-$ and
  $\tilde{p}_+ = p_+$.
\end{lemma}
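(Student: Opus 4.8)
The plan is to construct $\tilde p$ explicitly, by an inf-convolution (McShane-type) extension of $1/p$ corrected by a clamping that pins down both the behaviour at infinity and the range. It is cleaner to extend $s := 1/p$ (which takes values in $[0,1]$) and then set $\tilde p := 1/\tilde s$; the hypothesis $\pp\in LH(\Omega)$ passes to $s$ with comparable constants, since $p\geq 1$ gives $|s(x)-s(y)| = |p(x)-p(y)|/(p(x)p(y)) \leq |p(x)-p(y)|$ and likewise $|s(x)-s_\infty|\leq|p(x)-p_\infty|$ with $s_\infty := 1/p_\infty$. Write $s_- := \essinf_\Omega s = 1/p_+$ and $s_+ := \esssup_\Omega s = 1/p_-$; the goal becomes to build $\tilde s:\R^n\to[s_-,s_+]$ with $\tilde s|_\Omega = s$, with $\tilde s$ locally log-H\"{o}lder and log-H\"{o}lder at infinity (same target $s_\infty$), and with $\essinf_{\R^n}\tilde s = s_-$, $\esssup_{\R^n}\tilde s = s_+$.

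First I would fix a concave, nondecreasing modulus $\omega$ with $\omega(0)=0$ dominating the local log-H\"{o}lder modulus of $s$ near $0$: take $\omega(t) = 2C_0/(-\log t)$ for $0<t\leq e^{-2}$ (concave there precisely because $-\log t\geq 2$) and $\omega(t)=C_0$ for $t\geq e^{-2}$, where $C_0\geq 1$ is a local log-H\"{o}lder constant for $s$. Concavity forces subadditivity, so the inf-convolution
\[
  g(x) := \inf_{y\in\Omega}\bigl(s(y)+\omega(|x-y|)\bigr)
\]
is finite on $\R^n$, satisfies $|g(x)-g(x')|\leq\omega(|x-x'|)$ for all $x,x'$, and coincides with $s$ on $\Omega$ (the inequality $g\geq s$ on $\Omega$ uses that $\omega(|x-y|)$ dominates $|s(x)-s(y)|$). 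Next let $\varepsilon(x) := C_\infty/\log(e+|x|)$, a Lipschitz function on $\R^n$, with $C_\infty$ a log-H\"{o}lder-at-infinity constant for $s$, and define
\[
  \tilde s(x) := \max\!\Bigl\{\ell(x),\ \min\{g(x),u(x)\}\Bigr\},\qquad
  \ell(x):=\max\{s_-,\,s_\infty-\varepsilon(x)\},\quad u(x):=\min\{s_+,\,s_\infty+\varepsilon(x)\}.
\]
When $\Omega$ is unbounded, $s_\infty$ is a limit of values of $s$ along $\Omega$, hence $s_\infty\in[s_-,s_+]$; when $\Omega$ is bounded, $LH_\infty(\Omega)$ holds for any target at the price of a larger $C_\infty$, so we may take $s_\infty\in[s_-,s_+]$ there too. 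Then $[\ell(x),u(x)]$ is a nonempty interval containing $s_\infty$, and for $x\in\Omega$ the $LH_\infty$ bound gives $s(x)\in[s_\infty-\varepsilon(x),s_\infty+\varepsilon(x)]\cap[s_-,s_+]\subseteq[\ell(x),u(x)]$, so the clamp does nothing and $\tilde s(x)=g(x)=s(x)$; hence $\tilde p|_\Omega=p$.

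The remaining verifications are short. By construction $\tilde s$ is valued in $[s_-,s_+]$ and equals $s$ on $\Omega$, so $\essinf_{\R^n}\tilde s=s_-$ and $\esssup_{\R^n}\tilde s=s_+$, i.e.\ $\tilde p_-=p_-$ and $\tilde p_+=p_+$. Since $[\ell(x),u(x)]\subseteq[s_\infty-\varepsilon(x),s_\infty+\varepsilon(x)]$ we get $|\tilde s(x)-s_\infty|\leq\varepsilon(x)$ for every $x$, which is $LH_\infty$. For $LH_0$, the clamp $a\mapsto\max\{\ell,\min\{a,u\}\}$ is $1$-Lipschitz jointly in $(a,\ell,u)$ for the $\ell^\infty$-norm, so
\[
  |\tilde s(x)-\tilde s(x')| \leq \max\{\,|g(x)-g(x')|,\ |\ell(x)-\ell(x')|,\ |u(x)-u(x')|\,\}
  \leq \max\{\,\omega(|x-x'|),\ L\,|x-x'|\,\},
\]
with $L$ the Lipschitz constant of $\varepsilon$; for $|x-x'|<1/2$ both terms are at most $C(-\log|x-x'|)^{-1}$ with $C$ depending only on $C_0,C_\infty$. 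Thus $\tilde s\in LH(\R^n)$; it is continuous, hence measurable, and setting $\tilde p:=1/\tilde s$ finishes the proof (when $p_+<\infty$ the bounds transfer verbatim to $\tilde p$; when $p_+=\infty$ they are exactly the conditions defining $LH(\R^n)$ through $1/\tilde p=\tilde s$).

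The one real obstacle is that a single extension must simultaneously preserve local log-H\"{o}lder continuity, preserve log-H\"{o}lder behaviour at infinity, and not enlarge the range $[p_-,p_+]$: the inf-convolution alone gives the first but, for points far from $\Omega$, can violate the other two badly. The clamp onto the shrinking corridor $[s_\infty-\varepsilon(x),s_\infty+\varepsilon(x)]$ intersected with $[s_-,s_+]$ is what reconciles all three, and the only point needing care is that this clamp, being $1$-Lipschitz in each of its three arguments, does not degrade the local modulus already produced by the inf-convolution. A minor side issue --- choosing $s_\infty$ inside $[s_-,s_+]$ when $\Omega$ is bounded, so that the corridor is a genuine interval everywhere --- is disposed of as indicated above.
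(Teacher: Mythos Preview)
The paper does not prove this lemma: it is quoted verbatim from the textbook \cite{cruz-uribe_variable_2013} (Lemma~2.4 there) and simply cited. So there is no in-paper argument to compare against.

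Your proof is correct and is essentially the standard textbook approach: pass to the bounded function $s=1/p$, build a McShane/inf-convolution extension using a concave modulus that dominates the log-H\"older oscillation, then clamp into the moving window $[s_\infty-\varepsilon(x),\,s_\infty+\varepsilon(x)]\cap[s_-,s_+]$ to simultaneously enforce $LH_\infty$ and the range constraint. Two small points worth tightening. First, to conclude $g=s$ on $\Omega$ you need $\omega(|x-y|)\geq|s(x)-s(y)|$ for \emph{all} $x,y\in\Omega$, not only nearby ones; your construction does deliver this (since $C_0\geq1$ and $|s|\leq1$ give $|s(x)-s(y)|\leq1\leq C_0=\omega(t)$ for $t\geq e^{-2}$), but the phrase ``near $0$'' undersells what you actually use. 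Second, the clamp argument uses $s_-\leq s(x)\leq s_+$ pointwise on $\Omega$, whereas $p_-$ and $p_+$ are essential bounds; this is harmless because $LH_0$ forces $p$ (hence $s$) to agree a.e.\ with a continuous function, so one may take the continuous representative from the outset. Your handling of the case $p_+=\infty$ is a nonissue here: with the paper's definition of $LH_0$ written directly for $p$, membership in $LH_0$ already forces either $p_+<\infty$ or $p\equiv\infty$.
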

\begin{proof}[Proof of Lemma \ref{LH-G:lemma}]
 Fix $\Omega \subset \R^n$ and $\pp \in LH(\Omega)$. By Lemma
 \ref{Extension:lemma}, we can extend $\pp$ to an exponent function in 
 $LH(\R^n)$. By
  \cite[Theorem 7.3.22]{diening_lebesgue_2011},
  $\pp\in \calG(\R^n)$. Let $\calQ$ be a pairwise
  disjoint collection of cubes in $\Omega$. Let $f \in \Lpp(\Omega)$
  and $g\in \Lcpp(\Omega)$.  Extend them to all of $\R^n$ by setting
  them equal to $0$ on $\R^n\setminus \Omega$. Then
\begin{multline*}
  \sum_{Q\in \calQ} \|\chi_Q f\|_{\Lpp(\Omega)}\|\chi_Q
  g\|_{\Lcpp(\Omega)}
  = \sum_{Q\in \calQ} \|\chi_Q
  {f}\|_{L^{{p}(\cdot)}(\R^n)}
  \|\chi_Q {g}\|_{L^{{p}'(\cdot)}(\R^n)}\\
  \leq C \|{f}\|_{L^{{p}(\cdot)}(\R^n)}
  \|{g}\|_{L^{{p}'(\cdot)}(\R^n)}
  = C \|f\|_{\Lpp(\Omega)}
  \|g\|_{\Lcpp(\Omega)}.
\end{multline*}
\end{proof}

%
%
%
\section{Matrix Weighted Spaces}\label{MatrixWeights:sec}
In this section, we prove some basic properties about matrix weighted
spaces.  In working with vector-valued functions $\vf : \Omega \to
\R^d$, it will sometimes be convenient to use an equivalent norm on
$\R^d$.  Define $ |x|_1 = \sum_{i=1}^d |x_i| $;
then
\begin{equation} \label{eqn:equiv-norms}
  |x|\leq |x|_1 \leq \sqrt{d}\,|x|.
\end{equation}
It follows from this equivalence that for any finite set of vector-valued functions $\{\vf_i\}_{i=1}^m \subset \Lpp(\Omega;\R^d)$, we have
\begin{equation}\label{SumNorm:equiv}
  \frac{1}{m}\sum_{i=1}^m \|\vf_i\|_{\Lpp(\Omega)}
  \leq   \left\|\sum_{i=1}^m |\vf_i|\right\|_{\Lpp(\Omega)}
  \leq \sum_{i=1}^m \|\vf_i\|_{\Lpp(\Omega)}.
\end{equation}
The upper bound is immediate from the triangle inequality. The lower
bound holds since for each $i$, $|\vf_i(x)| \leq \sum_{j=1}^m
|\vf_j(x)|$.

Recall that the operator norm of a $d\times d$ matrix is defined by
\[|A|_{\op} = \sup_{\substack{\vv\in \R^d\\ |\vv|=1}} |A\vv|.\]

If $W \in \calS_d$, then it has $d$ non-negative eigenvalues, $\lambda_i$, $1\leq i \leq d$. Moreover, there exists an orthogonal matrix $U$ such that $U^TWU$ is diagonal. We denote a diagonal matrix by $D(\lambda_1, \ldots, \lambda_d)=D(\lambda_i)$. If $W$ is a measurable matrix function with values in $\calS_d$, then we can choose the matrix function $U$ to be measurable.
\begin{lemma}{\cite[Lemma 2.3.5]{ron_frames_1995}}\label{Diagonal}
Given a matrix function $W:\Omega \to \calS_d$, there exists a $d\times d$ measurable matrix function $U$ defined on $\Omega$ such that $U(x)$ is orthogonal and $U^T(x)W(x)U(x)$ is diagonal for a.e. $x\in \Omega$.
\end{lemma}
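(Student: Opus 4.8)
The plan is to argue by induction on the matrix dimension $d$. The base case $d=1$ is trivial (take $U\equiv 1$), so suppose the result holds in dimension $d-1$. The heart of the inductive step is to produce a \emph{single} measurable unit vector field $v:\Omega\to\R^d$ that is, pointwise, an eigenvector of $W$; granting this, a measurable change of basis block-diagonalizes $W$, with the lower-right block a measurable matrix function in $\calS_{d-1}$, and the inductive hypothesis closes the argument.

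First I would observe that $\lambda_1(x) := |W(x)|_\op$ equals the largest eigenvalue of $W(x)$ (since $W(x)\in\calS_d$), and that it is a measurable function of $x$, being the composition of the measurable map $W$ with the continuous map $A\mapsto|A|_\op$. Set $A(x) := \lambda_1(x)I - W(x)$; this is a measurable matrix function with values in $\calS_d$, and $\ker A(x)$ is exactly the $\lambda_1(x)$-eigenspace of $W(x)$, hence nontrivial for every $x$. The key step is to select $v(x)\in\ker A(x)$ with $|v(x)|=1$ measurably. To do this, note that $x\mapsto\operatorname{rank}A(x)$ is measurable, because $\{x:\operatorname{rank}A(x)\ge r\}$ is the finite union, over all $r\times r$ submatrices, of the measurable sets on which the corresponding minor --- a polynomial in the entries of $A(x)$ --- is nonzero. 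Partitioning $\Omega$ by the value of $\operatorname{rank}A(x)$, and then by which maximal square submatrix has nonzero determinant (choosing the first in a fixed enumeration), on each piece the corresponding columns of $A(x)$ are measurable, linearly independent, and span the column space of $A(x)$, which equals $(\ker A(x))^\perp$ because $A(x)$ is self-adjoint. Applying Gram--Schmidt to those columns and then completing with the standard basis vectors $e_1,\dots,e_d$ (at each stage keeping the first $e_j$ not in the span already built, a measurable prescription) yields a measurable orthonormal basis whose last vectors span $\ker A(x)$; take $v=v_1$ to be one of them. Alternatively, one can obtain $v_1$ more slickly by applying the Kuratowski--Ryll-Nardzewski measurable selection theorem to the compact-valued multifunction $x\mapsto \ker A(x)\cap S^{d-1}$, which has closed graph and is therefore measurable.

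With $v_1$ in hand, I would extend it by the same measurable Gram--Schmidt-with-completion procedure (applied to $v_1(x),e_1,\dots,e_d$) to a measurable orthogonal matrix function $V(x) = [\,v_1(x)\mid z_2(x)\mid\cdots\mid z_d(x)\,]$. Since $v_1(x)$ is an eigenvector of $W(x)$, conjugation kills the off-diagonal blocks and gives
\[
  V(x)^T W(x) V(x) = \begin{pmatrix} \lambda_1(x) & 0 \\ 0 & W'(x) \end{pmatrix},
\]
with $W':\Omega\to\calS_{d-1}$ measurable. By the inductive hypothesis there is a measurable orthogonal $U':\Omega\to O(d-1)$ with $U'(x)^T W'(x) U'(x)$ diagonal a.e., and then
\[
  U(x) := V(x)\begin{pmatrix} 1 & 0 \\ 0 & U'(x) \end{pmatrix}
\]
is measurable, orthogonal, and diagonalizes $W(x)$ for a.e.\ $x$.

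The main obstacle is the possibility of eigenvalue crossings: on a set of positive measure the multiplicity of $\lambda_1$ may vary, so the spectral projection onto the top eigenspace need not be continuous and one cannot just read off a continuous eigenvector. This is precisely why the construction must be carried out at the level of \emph{measurability} rather than continuity --- the measurability of $\operatorname{rank}A(x)$ together with the explicit, piecewise-measurable Gram--Schmidt bookkeeping is what makes the selection of $v_1(x)$ go through despite the degeneracies; invoking the Kuratowski--Ryll-Nardzewski theorem is the clean way to package exactly this point.
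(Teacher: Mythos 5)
The paper does not prove this lemma at all; it simply cites it to Ron and Shen \cite[Lemma 2.3.5]{ron_frames_1995} and moves on. So there is no ``paper's proof'' to compare against, and what you have written is a genuine, self-contained argument. Your inductive scheme is correct: $\lambda_1(x)=|W(x)|_\op$ is measurable and is the top eigenvalue; the rank function of $A(x)=\lambda_1(x)I-W(x)$ is measurable because it is determined by vanishing of minors; partitioning $\Omega$ by rank and by the first nonsingular maximal square submatrix gives, on each measurable piece, a fixed set of column indices of $A(x)$ that are linearly independent and span the column space, which for self-adjoint $A$ equals $(\ker A(x))^{\perp}$; and Gram--Schmidt followed by a deterministic completion from the standard basis is a measurable procedure on each piece, yielding a measurable unit eigenvector $v_1(x)$ for $\lambda_1(x)$. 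The block-diagonalization step and the induction then close cleanly. This is essentially the standard Azoff/Ron--Shen style proof, just written out. Your closing paragraph correctly identifies the obstruction (eigenvalue crossings forcing the argument to be carried out measurably rather than continuously), which is exactly the point of the lemma.

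One small inaccuracy: in the ``slick'' alternative you assert that the multifunction $x\mapsto\ker A(x)\cap S^{d-1}$ ``has closed graph and is therefore measurable.'' Closed graph would require $A$ to be continuous, not merely measurable: if $x_n\to x$ and $v_n\to v$ with $A(x_n)v_n=0$, one cannot conclude $A(x)v=0$ without continuity of $A$. The multifunction \emph{is} in fact measurable in the sense required by Kuratowski--Ryll-Nardzewski, but one should justify this either by the rank-stratification you already carried out or by a Castaing-type argument, not by appealing to a closed graph. Since your main proof is the explicit Gram--Schmidt construction, this does not affect the correctness of the argument, but the parenthetical justification for the alternative route should be repaired or dropped.
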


It follows from \cite[Theorem 2.34]{cruz-uribe_variable_2013} that given $\pp\in \Pp(\Omega)$, for any $f\in \Lpp(\Omega)$, there exists $g\in \Lcpp(\Omega)$ with $\|g\|_{\Lcpp(\Omega)} \leq 1$ such that 
\begin{align}\label{ScalarDuality}
\|f\|_{\Lpp(\Omega)} \leq 4 \int_{\Omega} f(x) g(x) \, dx.
\end{align}
We can extend this to the vector-valued function setting.
\begin{prop}\label{VectorDuality}
Let $\pp\in \Pp(\Omega)$. Then for any $\vf \in \Lpp(\Omega;\R^d)$, there exists $\vg\in \Lcpp(\Omega; \R^d)$ with $\|\vg\|_{\Lcpp(\Omega;\R^d)}\leq d$ such that
\[ \|\vf\|_{\Lpp(\Omega;\R^d)} \leq 4 \int_{\Omega} \vf(x) \cdot \vg(x) \, dx.\]
\end{prop}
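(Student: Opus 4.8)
The plan is to reduce the vector-valued duality statement to the scalar duality inequality \eqref{ScalarDuality} applied componentwise. First I would write $\vf = (f_1,\dots,f_d)^T$ and observe that, by \eqref{SumNorm:equiv}, $\|\vf\|_{\Lpp(\Omega;\R^d)} = \||\vf|\|_{\Lpp(\Omega)}$ is comparable to $\||\vf|_1\|_{\Lpp(\Omega)}$, but rather than lose constants through that equivalence I would instead work directly: for each component $f_i \in \Lpp(\Omega)$, apply \eqref{ScalarDuality} to obtain $g_i \in \Lcpp(\Omega)$ with $\|g_i\|_{\Lcpp(\Omega)} \le 1$ and $\|f_i\|_{\Lpp(\Omega)} \le 4\int_\Omega f_i g_i\,dx$. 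One subtlety is the sign: $f_i$ need not be nonnegative, so I would either apply the scalar result to $|f_i|$ and then set $g_i$ to have the sign of $f_i$ (i.e. replace $g_i$ by $g_i \cdot \operatorname{sgn}(f_i)$, which does not change its norm), so that $\int f_i g_i\,dx = \int |f_i| g_i\,dx \ge \tfrac14\|f_i\|_{\Lpp}$.

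Next I would assemble the vector $\vg = (g_1,\dots,g_d)^T$. Then $\vf \cdot \vg = \sum_{i=1}^d f_i g_i$, so summing the scalar estimates gives
\[
\sum_{i=1}^d \|f_i\|_{\Lpp(\Omega)} \le 4 \int_\Omega \vf(x)\cdot\vg(x)\,dx.
\]
To finish, I need two things: a lower bound $\|\vf\|_{\Lpp(\Omega;\R^d)} \le \sum_i \|f_i\|_{\Lpp(\Omega)}$, which is immediate since $|\vf(x)| \le |\vf(x)|_1 = \sum_i |f_i(x)|$ and hence $\||\vf|\|_{\Lpp} \le \||\vf|_1\|_{\Lpp} \le \sum_i\|f_i\|_{\Lpp}$ by the triangle inequality in $\Lpp(\Omega)$ (this is exactly the right-hand inequality in \eqref{SumNorm:equiv} with $m = d$); and an upper bound on $\|\vg\|_{\Lcpp(\Omega;\R^d)}$. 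For the latter, $\|\vg\|_{\Lcpp(\Omega;\R^d)} = \||\vg|\|_{\Lcpp} \le \||\vg|_1\|_{\Lcpp} \le \sum_i \|g_i\|_{\Lcpp} \le d$, again using \eqref{SumNorm:equiv} (or just the triangle inequality) together with $\|g_i\|_{\Lcpp}\le 1$. Chaining these: $\|\vf\|_{\Lpp(\Omega;\R^d)} \le \sum_i\|f_i\|_{\Lpp} \le 4\int_\Omega \vf\cdot\vg\,dx$, with $\|\vg\|_{\Lcpp(\Omega;\R^d)} \le d$, which is exactly the claim.

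I do not expect any genuine obstacle here; the proof is essentially bookkeeping. The one point requiring a little care is the sign correction on the $g_i$ (ensuring $\vf\cdot\vg$ is actually large and not subject to cancellation between components), and making sure the measurability of $\vg$ is preserved — both are routine since $\operatorname{sgn}(f_i)$ is measurable and multiplying a function in $\Lcpp$ by a $\pm 1$-valued measurable function keeps it in $\Lcpp$ with the same norm. The only reason the constant is $d$ rather than $1$ is that we sum $d$ component-wise bounds; no attempt is made to optimize it, consistent with the paper's stated conventions on constants.
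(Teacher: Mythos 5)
Your proof is correct and follows essentially the same route as the paper: bound $\|\vf\|_{\Lpp(\Omega;\R^d)}$ by $\sum_i \|f_i\|_{\Lpp(\Omega)}$ via the $\ell^1$ norm and the triangle inequality, apply scalar duality componentwise to produce the $g_i$, assemble $\vg=(g_1,\dots,g_d)^T$, and bound $\|\vg\|_{\Lcpp(\Omega;\R^d)}\leq d$ the same way. The sign correction you discuss is not actually needed if one takes \eqref{ScalarDuality} at face value (it already asserts $\int f g\,dx \ge \tfrac14\|f\|_{\Lpp}$ for general $f$, not just nonnegative $f$), though the instinct to check it is sound.
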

\begin{proof}
Let $\vf=(f_1, \ldots, f_d)^T \in \Lpp(\Omega;\R^d)$.
By~\eqref{eqn:equiv-norms} and the triangle inequality,
\[ \|\vf\|_{\Lpp(\Omega;\R^d)} \leq \||\vf|_1\|_{\Lpp(\Omega;\R^d)} \leq  \sum_{i=1}^d \|f_i\|_{\Lpp(\Omega)}.\]

By inequality \eqref{ScalarDuality}, for each $i$ there exists $g_i \in \Lcpp(\Omega)$ with $\|g_i\|_{\Lcpp(\Omega)} \leq 1$ such that $\|f_i\|_{\Lpp(\Omega)} \leq 4 \int_{\Omega} f_i(x)g_i(x) \, dx$. Thus, the last expression is bounded by
\[ 4\sum_{i=1}^d \int_{\Omega} f_i(x) g_i(x) \, dx  = 4\int_{\Omega} \vf(x) \cdot \vg(x) \,dx,\]
where $\vg= (g_1, \ldots, g_d)^T$. Again by~\eqref{eqn:equiv-norms} and the triangle inequality, 
\[\|\vg\|_{\Lcpp(\Omega;\R^d)}
  \leq \||\vg|_1\|_{\Lcpp(\Omega;\R^d)}
  \leq  \sum_{i=1}^d \|g_i\|_{\Lcpp(\Omega)}\leq  d.\]
\end{proof}
We will also need two facts about  the operator norm of matrices.
First, a lemma.
\begin{lemma}{\cite[Lemma 3.2]{roudenko_matrix-weighted_2002}}\label{opNorm:equiv}
If $\{\ve_1, \ldots, \ve_n\}$ is any orthonormal basis in $\R^n$, then for any $d\times n$ matrix $V$, we have
\begin{align*}
|V|_{\op} \approx \sum_{i=1}^n |V \ve_i|,
\end{align*}
with implicit constants depending only on $n$.
\end{lemma}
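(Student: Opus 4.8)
The plan is to prove the two-sided bound by estimating each direction separately, using only the definition of the operator norm and the equivalence of the $\ell^1$ and $\ell^2$ norms on $\R^n$ already recorded in \eqref{eqn:equiv-norms}. For the upper bound $|V|_{\op} \lesssim \sum_{i=1}^n |V\ve_i|$: given any unit vector $\vv \in \R^n$, write $\vv = \sum_{i=1}^n c_i \ve_i$ with $\sum_i c_i^2 = |\vv|^2 = 1$, so in particular $|c_i|\le 1$ for each $i$. Then by the triangle inequality $|V\vv| = \big|\sum_i c_i V\ve_i\big| \le \sum_i |c_i|\,|V\ve_i| \le \sum_i |V\ve_i|$. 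Taking the supremum over unit vectors $\vv$ gives $|V|_{\op} \le \sum_{i=1}^n |V\ve_i|$, with constant $1$.

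For the lower bound $\sum_{i=1}^n |V\ve_i| \lesssim |V|_{\op}$: for each fixed $i$, the vector $\ve_i$ has norm $1$, so directly from the definition $|V\ve_i| \le |V|_{\op}$. Summing over $i=1,\dots,n$ yields $\sum_{i=1}^n |V\ve_i| \le n\,|V|_{\op}$. Combining the two estimates gives
\[
  |V|_{\op} \le \sum_{i=1}^n |V\ve_i| \le n\,|V|_{\op},
\]
so $|V|_{\op} \approx \sum_{i=1}^n |V\ve_i|$ with implicit constants $1$ and $n$, depending only on $n$ as claimed.

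Honestly, there is no real obstacle here: both inequalities are one-line consequences of the definition of $|\cdot|_{\op}$ together with orthonormality (which is used only to control the coefficients $c_i$ in the expansion of a unit vector). The one point worth a moment's care is that the argument does not need $V$ to be square — it works verbatim for any $d\times n$ matrix, since $V\ve_i \in \R^d$ and we only ever use the Euclidean norm on the target $\R^d$ and the orthonormal basis on the source $\R^n$. If a cleaner statement of the implicit constants were desired, one could instead invoke $|V\vv| \le |V\vv|_1$ on $\R^d$ and expand, but the estimate above already suffices for all later applications.
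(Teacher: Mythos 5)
Your proof is correct, and it is the standard elementary argument. The paper does not supply its own proof but simply cites Roudenko \cite[Lemma 3.2]{roudenko_matrix-weighted_2002}; your two one-line estimates (expand a unit vector in the orthonormal basis for the upper bound, plug in each $\ve_i$ for the lower bound) give the explicit constants $1$ and $n$ and settle the matter.
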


Second, recall that for any two self-adjoint $d\times d$ matrices $V$ and $W$, we have
\begin{align*}
|VW|_{\op} = |(VW)^*|_{\op} = |W^*V^*|_{\op} = |WV|_{\op}.
\end{align*}

Dense subspaces play a key role in the proof of
Theorems~\ref{H=W:theorem} and~\ref{H=W-scalar:theorem}. In the case
that $\pp=p$, $1\leq p<\infty$, $L_c^\infty(\Omega; \R^d)$ and $C_c^\infty(\Omega;\R^d)$ are dense in $L^p(W,\Omega)$ (see \cite[Section 3]{cruz-uribe_matrix_2016}). We extend these results to the variable exponent setting.
\begin{prop}\label{BddCompSuppDense}
Given a matrix weight $W : \Omega \to \calS_d$ and $\pp\in \Pp(\Omega)$ with $p_+<\infty$, $L_c^\infty(\Omega;\R^d)$ is dense in $\Lpp(W,\Omega)$.
\end{prop}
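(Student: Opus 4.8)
\textbf{Proof proposal for Proposition~\ref{BddCompSuppDense}.}
The plan is to approximate an arbitrary $\vf \in \Lpp(W,\Omega)$ by truncating it in two ways: first cut off the region where $\vf$ (or $W\vf$) is large, and then cut off the region far from the origin, so that what remains is bounded with compact support. Concretely, given $\vf \in \Lpp(W,\Omega)$, set $g(x) = |W(x)\vf(x)|$, so that $g \in \Lpp(\Omega)$ by definition of the norm. For $N \in \N$ let
\[
E_N = \{x \in \Omega : |x| \leq N,\ g(x) \leq N,\ |\vf(x)| \leq N\},
\]
and put $\vf_N = \vf \chi_{E_N}$. Each $\vf_N$ lies in $L_c^\infty(\Omega;\R^d)$: it is bounded by $N$ in the Euclidean norm and supported in the bounded set $\{|x|\leq N\}$. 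Moreover $\vf_N \in \Lpp(W,\Omega)$ since $|W\vf_N| = g\chi_{E_N} \leq g$. The sets $E_N$ increase to a set of full measure in $\Omega$ (the conditions $g(x)<\infty$ and $|\vf(x)|<\infty$ hold a.e., the former because $g \in \Lpp(\Omega)$ forces $g$ finite a.e.\ when $p_+<\infty$), so $\chi_{\Omega \setminus E_N} \to 0$ a.e.

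It remains to show $\|\vf - \vf_N\|_{\Lpp(W,\Omega)} \to 0$, i.e.\ $\|g\chi_{\Omega\setminus E_N}\|_{\Lpp(\Omega)} \to 0$. Since $g\chi_{\Omega\setminus E_N} \leq g \in \Lpp(\Omega)$ and $g\chi_{\Omega\setminus E_N} \to 0$ a.e., this is precisely a dominated convergence statement in the scalar variable Lebesgue space. Under the hypothesis $p_+<\infty$, the space $\Lpp(\Omega)$ has absolutely continuous norm and dominated convergence holds; this is a standard fact from \cite{cruz-uribe_variable_2013} (e.g.\ Theorem~2.62 there, which states that if $|h_k| \leq h \in \Lpp$ and $h_k \to 0$ a.e., then $\|h_k\|_{\Lpp} \to 0$, provided $p_+ < \infty$). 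Applying this with $h_k = g\chi_{\Omega\setminus E_k}$ and $h = g$ gives the conclusion.

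The only point requiring care is the invocation of dominated convergence in $\Lpp(\Omega)$: this is exactly where $p_+<\infty$ is used, since when $p_+ = \infty$ the $L^\infty$ piece of the modular destroys absolute continuity of the norm and the statement fails. Everything else is bookkeeping: checking that $\vf_N$ is genuinely bounded with compact support, that it belongs to the weighted space, and that the residual $g\chi_{\Omega\setminus E_N}$ is dominated by $g$ and tends to $0$ pointwise a.e. I do not anticipate a real obstacle here; the proposition is essentially a reduction to the corresponding scalar density fact via the identity $\|\vh\|_{\Lpp(W,\Omega)} = \||W\vh|\|_{\Lpp(\Omega)}$.
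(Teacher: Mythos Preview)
Your proof is correct and takes a more direct route than the paper's. The paper first reduces to the case where $W$ is diagonal, using the measurable diagonalization Lemma~\ref{Diagonal}: writing $D = U^T W U$ with $U$ orthogonal, one has $|W\vf| = |D(U^T\vf)|$, so it suffices to approximate $\vh = U^T\vf$ in $\Lpp(D,\Omega)$. For diagonal $D = D(\lambda_i)$ the norm decouples as $\|D\vh\|_{\Lpp} \approx \sum_i \|\lambda_i h_i\|_{\Lpp}$, and the paper then invokes the known scalar-weighted density of $L_c^\infty(\Omega)$ in $\Lpp(\lambda_i,\Omega)$ componentwise; the approximant is then pushed back via $U$. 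Your argument bypasses all of this by observing that $\|\vf - \vf_N\|_{\Lpp(W,\Omega)} = \|g\,\chi_{\Omega\setminus E_N}\|_{\Lpp(\Omega)}$ with $g = |W\vf| \in \Lpp(\Omega)$, so the question is immediately a scalar dominated-convergence statement in the unweighted space $\Lpp(\Omega)$---no diagonalization, no appeal to an external scalar-weighted density lemma. Your route is shorter and more self-contained; the paper's route has the virtue of exhibiting the diagonalization technique, which recurs in matrix-weighted theory, but that machinery is not actually needed for this proposition.
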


\begin{proof}
First assume $W$ is diagonal, i.e., $W(\cdot)=D(\lambda_i(\cdot))$. Fix $\vf = (f_1 , \ldots, f_d)^T\in \Lpp(W,\Omega)$. For $x\in \Omega$, since $W(x) \in \calS_d$, $\lambda_i(x)\geq 0$ for $i=1,\ldots, d$. By the equivalence of norms \eqref{SumNorm:equiv}, we have that 
\[\|W\vf\|_{\Lpp(\Omega;\R^d)} = \left\| \left|\sum_{i=1}^d \lambda_i \vf \cdot \ve_i\right|\right\|_{\Lpp(\Omega)} 
\leq \left\| \sum_{i=1}^d \lambda_i |f_i| \right\|_{\Lpp(\Omega)}
 \approx \sum_{i=1}^d \| \lambda_i f_i\|_{\Lpp(\Omega)},\]
where $\{\ve_i\}_{i=1}^d$ is the standard coordinate basis in
$\R^d$. In \cite[Lemma 3.1]{MR3572271}, the authors prove
$L_c^\infty(\Omega)$ is dense in $\Lpp(w,\Omega)$ for any weight
$w$. Let $\epsilon >0$. For each $i=1,\ldots, d$, choose
$g_i \in L_c^\infty(\Omega)$ such that
$\|f_i - g_i\|_{\Lpp(\lambda_i,\Omega)}<\epsilon/d$. Define
$\vg = (g_1, \ldots, g_d)^T$. Since $g_i \in L_c^\infty(\Omega)$,
$\vg \in L_c^\infty(\Omega;\R^d)$, and so
\[\| \vf-\vg\|_{\Lpp(W,\Omega)} \leq \sum_{i=1}^d \| \lambda_i (f_i - g_i)\|_{\Lpp(\Omega)}<\epsilon.\]
Thus, $L_c^\infty(\Omega;\R^d)$ is dense $\Lpp(W,\Omega)$. This
completes the case where $W$ is diagonal.

Now fix an arbitrary matrix weight $W: \Omega \to \calS_d$. By Lemma \ref{Diagonal}, there exists a measurable orthogonal matrix function $U$ such that $D=U^T W U$ is diagonal. Let $\vf\in \Lpp(W,\Omega)$ and set $\vh=U^T \vf$. Then by orthogonality of $U$, we have that 
\[|D\vh| = |U^TWUU^T\vf| = |W\vf|.\]
Hence $\vh \in \Lpp(D,\Omega)$, and so by the previous argument, $L^\infty_c(\Omega;\R^d)$ is dense in $\Lpp(D,\Omega)$. Thus, for any $\epsilon>0$, there exists $\vg\in L_c^\infty(\Omega;\R^d)$ such that $\|\vh - \vg\|_{\Lpp(D,\Omega)} < \epsilon$. Using orthogonality again, we have that
\[|D(\vh - \vg)| =|U^T W U (U^T\vf - \vg)| = |W(\vf - U \vg)|.\]
Since $U$ is orthogonal, $|U\vg| =|\vg|$. Thus, $U\vg \in L_c^\infty(\Omega;\R^d)$. This completes the proof.
\end{proof}

\begin{prop}\label{ContCompSuppDense}
Given a matrix weight $W: \Omega \to \calS_d$ and $\pp\in \Pp(\Omega)$ with $p_+<\infty$, $C_c^\infty(\Omega;\R^d)$ is dense in $\Lpp(W,\Omega)$. 
\end{prop}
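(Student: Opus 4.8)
The plan is to reduce, by Proposition~\ref{BddCompSuppDense}, to approximating bounded, compactly supported functions, and then to use a standard mollification. Since $L_c^\infty(\Omega;\R^d)$ is dense in $\Lpp(W,\Omega)$ by Proposition~\ref{BddCompSuppDense}, it suffices to fix $\vf\in L_c^\infty(\Omega;\R^d)\cap\Lpp(W,\Omega)$ and produce, for each $\varepsilon>0$, a function $\vg\in C_c^\infty(\Omega;\R^d)$ with $\|\vf-\vg\|_{\Lpp(W,\Omega)}<\varepsilon$. Write $K=\supp\vf$, a compact subset of $\Omega$; let $\delta_0=\tfrac{1}{3}\dist(K,\R^n\setminus\Omega)$ if $\Omega\neq\R^n$ and $\delta_0=1$ otherwise, and set $K'=\{x:\dist(x,K)\le\delta_0\}$, a compact subset of $\Omega$. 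Fix a nonnegative $\varphi\in C_c^\infty(B(0,1))$ with $\int_{\R^n}\varphi=1$, let $\varphi_t(x)=t^{-n}\varphi(x/t)$, extend $\vf$ by $0$ to $\R^n$, and set $\vf_t=\varphi_t\ast\vf$, the convolution taken componentwise. For $0<t\le\delta_0$ one has $\vf_t\in C^\infty(\R^n;\R^d)$ with $\supp\vf_t\subseteq K+\overline{B(0,t)}\subseteq K'$, so $\vf_t\in C_c^\infty(\Omega;\R^d)$; I will take $\vg=\vf_t$ for $t$ sufficiently small.

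It then remains to show $\|\vf_t-\vf\|_{\Lpp(W,\Omega)}=\big\||W(\vf_t-\vf)|\big\|_{\Lpp(\Omega)}\to 0$ as $t\to0$. Since $\vf\in L^\infty$, almost every point of $\Omega$ is a Lebesgue point of $\vf$, so $\vf_t(x)\to\vf(x)$ for a.e.\ $x$ and hence $|W(x)(\vf_t(x)-\vf(x))|\to 0$ a.e. Moreover, pointwise,
\[
|W(\vf_t-\vf)|\le|W|_{\op}\big(|\vf_t|+|\vf|\big)\le 2\,\|\vf\|_{L^\infty(\Omega)}\,|W|_{\op}\,\chi_{K'}.
\]
Because $p_+<\infty$, the space $\Lpp(\Omega)$ has absolutely continuous norm, so the dominated convergence theorem holds in it; applying it with the dominating function $2\|\vf\|_{L^\infty(\Omega)}|W|_{\op}\chi_{K'}$ yields $\big\||W(\vf_t-\vf)|\big\|_{\Lpp(\Omega)}\to 0$, which completes the argument.

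The step requiring the most care — and what I expect to be the main obstacle — is checking that the dominating function $|W|_{\op}\chi_{K'}$ genuinely belongs to $\Lpp(\Omega)$, since the dominated convergence step is vacuous otherwise. Using Lemma~\ref{opNorm:equiv} to replace $|W(x)|_{\op}$ by $\sum_{i=1}^{d}|W(x)\ve_i|$, with $\{\ve_i\}$ the standard basis of $\R^d$, this amounts to the requirement that $\chi_{K'}\ve_i\in\Lpp(W,\Omega)$ for each $i$, i.e.\ that $|W|_{\op}\in\Lpp_{\mathrm{loc}}(\Omega)$. This is automatic when $|W|_{\op}$ is locally bounded, and — what matters for the applications of this proposition in the sequel — it holds whenever $W\in\calA_\pp$, so no additional hypothesis is needed in that case. (When $|W|_{\op}$ is merely locally integrable, a preliminary truncation of $\vf$ away from the region where $|W|_{\op}$ is large, carried out via Proposition~\ref{BddCompSuppDense} and dominated convergence in $\Lpp(\Omega)$, must be inserted before mollifying, so that the dominating function becomes bounded and compactly supported.)
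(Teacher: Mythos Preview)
Your approach differs from the paper's. After the same reduction to $\vg\in L_c^\infty(\Omega;\R^d)$ via Proposition~\ref{BddCompSuppDense}, the paper does not mollify in the variable-exponent space at all. Instead it invokes the already-known constant-exponent result that $C_c^\infty(\Omega;\R^d)$ is dense in $L^{p_+}(W,\Omega)$, chooses $\vh\in C_c^\infty$ with $\|\vg-\vh\|_{L^{p_+}(W,\Omega)}$ small, and transfers this to $\Lpp$ via the elementary embedding $\|\cdot\|_{\Lpp(W,E)}\le(1+|E|)\|\cdot\|_{L^{p_+}(W,E)}$ on sets $E$ of finite measure. So the paper outsources the delicate step to the constant-exponent theory, while your argument is more self-contained, carrying out the mollification and the dominated-convergence step directly in $\Lpp(W,\Omega)$.

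The obstacle you flag is genuine, but your parenthetical fix does not close it. Replacing $\vf$ by $\tilde\vf=\vf\,\chi_{\{|W|_{\op}\le N\}}$ makes $|W|_{\op}$ bounded on $\supp\tilde\vf$, but mollification inflates the support to a $t$-neighborhood $K'$, and on $K'\setminus\{|W|_{\op}\le N\}$ the weight is again uncontrolled; the dominating function $2\|\vf\|_\infty|W|_{\op}\chi_{K'}$ is therefore neither bounded nor known to lie in $\Lpp$, and the dominated-convergence step still cannot be executed. Your proof is complete and correct once one assumes $|W|_{\op}\in\Lpp_{\mathrm{loc}}(\Omega)$---in particular whenever $W\in\calA_\pp$, which is the only case the paper ever uses---but as a proof of the proposition in the stated generality (arbitrary matrix weight with $|W|_{\op}\in L^1_{\mathrm{loc}}$) it remains incomplete.
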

\begin{proof}

Fix $\vf = (f_1, \ldots, f_d)^T\in \Lpp(W,\Omega)$ and let $\epsilon>0$. By Proposition \ref{BddCompSuppDense}, there exists $\vg\in L_c^\infty(\Omega;\R^d)$ such that $\|\vf-\vg\|_{\Lpp(W,\Omega)} < \epsilon/2$. 

Choose an open ball $B$ such that $\supp(\vg)\subset B\cap\Omega$. Since $p_+<\infty$, $C_c^\infty(\Omega ; \R^d)$ is dense in $L^{p_+}(W,\Omega)$ (See \cite[Proposition 3.7]{cruz-uribe_matrix_2016}). Choose $\vh \in C_c^\infty(\Omega;\R^d)$ such that 
\[\|\vg-\vh\|_{L^{p_+}(W,\Omega)}\leq \frac{\epsilon}{2(1+|B\cap\Omega|)}.\]

The rest of the proof follows from \cite[Corollary 2.48]{cruz-uribe_variable_2013}. Since $|B\cap\Omega|<\infty$, 
\[\|\vg-\vh\|_{\Lpp(W,\Omega)} \leq (1+|B\cap\Omega|)\|\vg-\vh\|_{L^{p_+}(W,\Omega)}.\]
Now we have that
\begin{multline*}
\|\vf-\vh\|_{\Lpp(W,\Omega)} \leq \|\vf-\vg\|_{\Lpp(W,\Omega)} + \|\vg-\vh\|_{\Lpp(W,\Omega)}\\
	 \leq \|\vf-\vg\|_{\Lpp(W,\Omega)} + (1+|B\cap\Omega|)\|\vg-\vh\|_{L^{p_+}(W,\Omega)}
	 < \epsilon.
\end{multline*}
Thus, $C_c^\infty(\Omega;\R^d)$ is dense in $\Lpp(W,\Omega)$.
\end{proof}

%
\section{Matrix $\calA_\pp$}\label{MatrixApp}

In this section, we give two related characterizations of matrix $\A_\pp$ weights in terms of
averaging operators.   Given a cube $Q\subseteq \Omega$, define the averaging operator $A_Q$ by 
\[A_Q \vf(x) = \dashint_{Q} \vf(y) \,dy \; \chi_{Q}(x).\]

\begin{theorem}\label{AvgOpBddEquivApp}
  Let $\pp \in \Pp(\Omega)$. Then a matrix weight
  $W: \Omega \to \calS_d$ satisfies $W \in \calA_\pp$ if and only if
  $A_Q : \Lpp(W,\Omega)\to \Lpp(W,\Omega)$ uniformly for all cubes
  $Q \subseteq \Omega$. Moreover,
\[ \|A_Q\| \leq 4[W]_{\calA_{\pp}} \leq C(d) \|A_Q\|,\]
where $\|A_Q\|$ denotes the operator norm of $A_Q$.
\end{theorem}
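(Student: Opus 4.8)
The plan is to prove the two directions of the equivalence and track the constants carefully.

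For the direction $W \in \calA_\pp \implies A_Q$ uniformly bounded, I would fix a cube $Q \subseteq \Omega$ and a function $\vf \in \Lpp(W,\Omega)$, and estimate $\|A_Q\vf\|_{\Lpp(W,\Omega)} = \||W A_Q\vf|\chi_Q\|_{\Lpp}$. Pointwise on $Q$, write $|W(x) A_Q\vf(x)| = |W(x)\,\dashint_Q \vf(y)\,dy| \leq |Q|^{-1}\int_Q |W(x)W^{-1}(y)|_{\op}\,|W(y)\vf(y)|\,dy$, inserting $W^{-1}(y)W(y)$. Now I would apply the scalar H\"older inequality (Lemma~\ref{Holder}) in the $y$ variable to the pair $|W(x)W^{-1}(\cdot)|_{\op}\chi_Q \in \Lcpp$ and $|W(\cdot)\vf(\cdot)|\chi_Q \in \Lpp$, getting $|W(x)A_Q\vf(x)| \leq 4|Q|^{-1}\big\||W(x)W^{-1}(\cdot)|_{\op}\chi_Q\big\|_{\Lcpp}\,\|\,|W\vf|\chi_Q\|_{\Lpp}$ for a.e. $x \in Q$. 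Taking the $\Lpp_x$ norm of both sides over $Q$, the last factor is a constant and pulls out, and what remains is exactly $|Q|^{-1}\big\|\,\||W(x)W^{-1}(\cdot)|_{\op}\chi_Q\|_{\Lcpp}\chi_Q(x)\big\|_{\Lpp_x} \leq [W]_{\calA_\pp}$ by \eqref{eqn:Matrix-App}. This yields $\|A_Q\| \leq 4[W]_{\calA_\pp}$.

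For the converse, suppose $A_Q$ is bounded uniformly with norm $\|A_Q\| =: N$; I want $[W]_{\calA_\pp} \leq C(d) N$. Fix a cube $Q$. The idea is to test $A_Q$ on well-chosen vector functions. For each fixed $y_0$, applying $A_Q$ to $\vf = \vg\,\chi_Q$ gives, on $Q$, $W(x) A_Q\vf(x) = |Q|^{-1}W(x)\int_Q \vg(y)\,dy$; choosing $\vg(y) = W^{-1}(y)\vv_i(y)$ where $\{\vv_i\}$ realizes the operator norm appropriately, and using Lemma~\ref{opNorm:equiv} to replace $|W(x)W^{-1}(y)|_{\op}$ by a sum $\sum_{i=1}^d |W(x)W^{-1}(y)\ve_i|$ over the standard basis, I would reduce the matrix quantity in \eqref{eqn:Matrix-App} to $d$ scalar-type pieces. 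Concretely, for each $i$ take $\vf_i = (W^{-1}\ve_i)\,h\,\chi_Q$ with $h \in \Lcpp$, $\|h\|_{\Lcpp} \leq 1$ nearly optimizing the inner norm $\||W(x)W^{-1}(\cdot)\ve_i|\chi_Q\|_{\Lcpp}$ via \eqref{ScalarDuality}; then $\|\vf_i\|_{\Lpp(W,\Omega)} = \||\,|\ve_i|\,h|\chi_Q\|_{\Lpp} \le \|h\chi_Q\|_{\Lpp}$, while $\|A_Q\vf_i\|_{\Lpp(W,\Omega)}$ reproduces (up to the factor $4$ from duality and the constants from Lemma~\ref{opNorm:equiv}) the $i$-th piece of $|Q|^{-1}\big\|\,\||W(x)W^{-1}(\cdot)|_{\op}\chi_Q\|_{\Lcpp}\chi_Q(x)\big\|_{\Lpp_x}$. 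Summing over $i=1,\dots,d$ and invoking \eqref{SumNorm:equiv} to control $\sum_i \|\vf_i\|_{\Lpp(W,\Omega)}$ by a constant times $\|\sum_i |\vf_i|\|_{\Lpp}$ — which is itself controlled since each $h\chi_Q$ has norm at most $\|h\|_{\Lcpp}\|\chi_Q\|$-type bounds — gives $[W]_{\calA_\pp} \leq C(d) N$, taking the supremum over $Q$ at the end.

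The main obstacle is the converse direction: getting the duality argument to land \emph{exactly} on the doubly-iterated norm in \eqref{eqn:Matrix-App}. The subtlety is that the inner $\Lcpp$ norm depends on the outer variable $x$, so one cannot simply pick a single test function; one must either use a measurable selection of near-extremizers $h_x$ (depending measurably on $x$) or, more cleanly, use the associate-space duality of variable Lebesgue spaces at the level of the \emph{outer} norm as well, reducing \eqref{eqn:Matrix-App} to a genuine bilinear pairing before choosing test data. I expect the cleanest route is: bound the outer $\Lpp_x$ norm by pairing against some $u \in \Lcpp$ with $\|u\|_{\Lcpp}\leq 1$ via \eqref{ScalarDuality}, then bound the inner $\Lcpp$ norm pointwise in $x$ by pairing against $\vg(x,\cdot)$, assemble $\vf(y) := |Q|^{-1}\big(\int_Q u(x)\,\text{(something)}\,dx\big)\vg(\cdot)$ as the single test function, and verify both that $\|\vf\|_{\Lpp(W,\Omega)}$ is controlled and that $\langle A_Q \vf, \text{dual}\rangle$ recovers the desired product — the bookkeeping of which variable is integrated where is where care is needed, and where the dimensional constant $C(d)$ enters through Lemma~\ref{opNorm:equiv} and Proposition~\ref{VectorDuality}.
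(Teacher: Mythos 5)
Your forward direction matches the paper's argument exactly: insert $W^{-1}(y)W(y)$ inside the average, bound by the operator norm, apply H\"older in $y$, and take the $\Lpp_x$-norm over $Q$, giving $\|A_Q\| \leq 4[W]_{\calA_\pp}$. No issues there.

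The converse is where there is a genuine gap, and you have in fact put your finger on it yourself: the inner $\Lcpp$-norm in \eqref{eqn:Matrix-App} depends on the outer variable $x$, so there is no single near-extremizing $h$ that works simultaneously for all $x$, and the "assemble a single test function $\vf(y)$" step you sketch at the end does not actually close. If $\vg(x,\cdot)$ near-extremizes the inner norm for each fixed $x$, you cannot integrate out $x$ and still recover the iterated pairing; the bilinear structure is lost. This is not merely bookkeeping — it is the essential obstruction, and your proposal leaves it unresolved. The idea you are missing is the \emph{reducing operator}. Using Proposition~\ref{EllipsoidApprox}, one associates to the $\Lpp$- and $\Lcpp$-averages of $W$ on $Q$ constant matrices $\calW_Q^\pp$ and $\overline{\calW}_Q^\cpp$ such that $\||W(\cdot)\vv|\chi_Q\|_{\Lpp}|Q|^{-1/p_Q}\approx|\calW_Q^\pp\vv|$ and similarly for $W^{-1}$. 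Proposition~\ref{ApdotReducingOps} then shows $[W]_{\calA_\pp}\approx_d \sup_Q|\calW_Q^\pp\overline{\calW}_Q^\cpp|_{\op}$, which entirely eliminates the nested $x$-dependent norm: the inner $\Lcpp$-norm collapses to a fixed matrix acting on $W(x)\vv$. Once in that form, the converse is a clean finite-dimensional duality argument — one estimates $|\overline{\calW}_Q^\cpp\calW_Q^\pp\vv|$, uses Proposition~\ref{VectorDuality} to produce a single test vector $\vg$, commutes the self-adjoint matrices through the dot product, and lands directly on $\|A_Q(W^{-1}\vg)\|_{\Lpp(W,\Omega)}$, to which the hypothesis applies. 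Without passing to the reducing-operator formulation (or some equivalent device for freezing the inner norm), the duality route you describe does not go through.
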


Similarly, given any collection $\calQ$ of pairwise disjoint cubes
$Q\subseteq \Omega$, define the averaging operator $A_\calQ$ by
\[A_\calQ \vf(x) = \sum_{Q\in\calQ} \dashint_Q \vf(y) \, dy \; \chi_Q(x).\]
For our second characterization, we must assume the exponent function
satisfies some regularity conditions.  

\begin{theorem}\label{SumAvgOpBdd}
  Given $\pp \in \Pp(\Omega)$, suppose $\pp \in LH(\Omega)$.  Then a matrix
  weight $W:\Omega \to \calS_d$ satisfies $W \in \calA_\pp$ if and only if it satisfies
\begin{equation}\label{SumAvgOpBdd:ineq}
 \|A_\calQ \vf\|_{\Lpp(W,\Omega)} \leq C(n,d,\pp) [W]_{\calA_\pp} \|\vf\|_{\Lpp(W,\Omega)}.
\end{equation}
\end{theorem}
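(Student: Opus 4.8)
The plan is to prove the two implications separately, with the bulk of the work in showing that $W \in \calA_\pp$ implies the disjoint-cube averaging bound \eqref{SumAvgOpBdd:ineq}. The reverse implication is easy: taking $\calQ = \{Q\}$ to be a single cube reduces \eqref{SumAvgOpBdd:ineq} to the uniform boundedness of $A_Q$, which by Theorem~\ref{AvgOpBddEquivApp} is equivalent to $W \in \calA_\pp$; this also shows the regularity hypothesis $\pp \in LH(\Omega)$ is not needed for that direction.

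For the forward implication, fix $W \in \calA_\pp$ and a pairwise disjoint collection $\calQ$ of cubes. The first step is to estimate $\|A_\calQ \vf\|_{\Lpp(W,\Omega)}$ by duality. Using the vector-valued duality of Proposition~\ref{VectorDuality}, there is a $\vg \in \Lcpp(\Omega;\R^d)$ with $\|\vg\|_{\Lcpp(\Omega;\R^d)} \le d$ such that
\[
\|W A_\calQ \vf\|_{\Lpp(\Omega;\R^d)} \le 4 \int_\Omega (W A_\calQ \vf)(x)\cdot \vg(x)\, dx.
\]
Since the cubes in $\calQ$ are pairwise disjoint, $W A_\calQ \vf(x) = \sum_{Q\in\calQ} W(x)\big(\dashint_Q \vf\big)\chi_Q(x)$, so the integral splits as $\sum_{Q\in\calQ} \int_Q W(x)\big(\dashint_Q \vf\big)\cdot \vg(x)\, dx$. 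For each cube $Q$, the second step is a single-cube estimate: writing $\vf = W^{-1}(W\vf)$ inside the average and inserting $W(x)W^{-1}(y)$, one bounds the inner product pointwise by $|W(x)W^{-1}(y)|_{\op}\,|W(y)\vf(y)|\,|\vg(x)|$, then applies the scalar H\"older inequality (Lemma~\ref{Holder}) in $y$ over $Q$ against $|W\vf|$, and in $x$ over $Q$ against $|\vg|$. This produces, for each $Q$,
\[
\Big|\int_Q W(x)\Big(\textstyle\dashint_Q \vf\Big)\cdot \vg(x)\, dx\Big|
\lesssim |Q|^{-1}\big\| \|\,|W(x)W^{-1}(\cdot)|_{\op}\chi_Q(\cdot)\|_{\Lcpp}\chi_Q(x)\big\|_{\Lpp_x}
\,\|\chi_Q\, |W\vf|\|_{\Lpp}\,\|\chi_Q |\vg|\|_{\Lcpp},
\]
and the first factor is exactly bounded by $[W]_{\calA_\pp}$ (here one should be slightly careful about the placement of the $|Q|^{-1}$ and whether one splits it as $|Q|^{-1/p(x)}|Q|^{-1/p'(x)}$ inside the norms; this is the routine-but-delicate bookkeeping referenced in the definition \eqref{eqn:Matrix-App}). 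The third step is to sum over $Q \in \calQ$: the factors $\|\chi_Q |W\vf|\|_{\Lpp}\|\chi_Q |\vg|\|_{\Lcpp}$ are precisely what property $\calG$ controls, so by Lemma~\ref{LH-G:lemma} (which applies because $\pp \in LH(\Omega)$) the sum is bounded by $C(n,\pp)\,\| |W\vf|\|_{\Lpp}\,\| |\vg|\|_{\Lcpp} \le C(n,\pp)\,d\,\|\vf\|_{\Lpp(W,\Omega)}$. Combining the three steps with the duality estimate gives \eqref{SumAvgOpBdd:ineq} with constant $C(n,d,\pp)[W]_{\calA_\pp}$.

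The main obstacle is the single-cube estimate of the second step: one must move the matrix $W(x)$ past the average $\dashint_Q \vf$ and introduce $W^{-1}(y)$ in a way that the resulting scalar double integral $\int_Q\int_Q |W(x)W^{-1}(y)|_{\op}|W(y)\vf(y)||\vg(x)|\,dy\,dx$ factors cleanly against the two variable-exponent norms in the $\calA_\pp$ quantity; this is where the specific form of \eqref{eqn:Matrix-App} — with the $\Lcpp$ norm in the inner variable nested inside the $\Lpp$ norm in the outer variable — is used, and getting the normalization $|Q|^{-1}$ distributed correctly between the $x$- and $y$-integrations requires the convention $\|\,|Q|^{-1/p(\cdot)}\chi_Q\,\|_{\Lpp} \approx |Q|^{-1}\|\chi_Q\|_{\Lpp}\cdot|Q|$-type identities, i.e. the homogeneity of the variable Lebesgue norm against characteristic functions of cubes, which is why $\pp \in LH_0$ and the boundedness of $\Omega$-localized exponents matter. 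Everything else — the duality reduction, the splitting over disjoint cubes, and the final application of property $\calG$ — is essentially bookkeeping once Theorem~\ref{AvgOpBddEquivApp} and Lemma~\ref{LH-G:lemma} are in hand.
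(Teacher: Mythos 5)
Your proposal is correct and follows essentially the same route as the paper: reduce to a scalar pairing by vector-valued duality (Proposition~\ref{VectorDuality}), split over the disjoint cubes, insert $W(x)W^{-1}(y)$ and apply H\"older (Lemma~\ref{Holder}) in $y$ then in $x$ to extract the $\calA_\pp$ quantity, and finally assemble the sum with property $\calG$ (Lemma~\ref{LH-G:lemma}); the easy converse via single cubes and Theorem~\ref{AvgOpBddEquivApp} is also as in the paper. One small caveat: your aside about distributing $|Q|^{-1}$ as $|Q|^{-1/p(x)}|Q|^{-1/p'(x)}$ and invoking homogeneity/$K_0$ is a red herring here --- the prefactor $|Q|^{-1}$ lands exactly where definition~\eqref{eqn:Matrix-App} wants it after the two H\"older applications, so no such identity is used; log-H\"older continuity enters this theorem only through property $\calG$, as the paper remarks.
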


\begin{remark}
  In the proof of Theorem~\ref{SumAvgOpBdd} we only use that $\pp\in
  LH(\Omega)$ to apply Lemma~\ref{LH-G:lemma} to show that $\pp$
  satisfies property $\calG$.    Thus our results hold if we only assume this
  property, which is strictly weaker than log-H\"older continuity:
  see~\cite[Remark~7.3.8]{diening_lebesgue_2011}. 
\end{remark}

\medskip

We will prove each of these results in turn.  To prove
Theorem~\ref{AvgOpBddEquivApp} we need to give an equivalent
characterization of matrix $\A_\pp$, based on the notion of reducing
operators.  To define these, we need to consider general norms on the
vector space $\R^d$.  
\begin{prop}{\cite[Theorem 4.11]{bownik_extrapolation_2022}}\label{EllipsoidApprox}
  Given any measurable norm function
  $r: \R^n \times \R^d \to [0,\infty)$, there exists a measurable
  matrix function $W: \Omega \to \calS_d$ such that for any
  $x\in \R^n$ and $\vv\in \R^d$,
\[r(x,\vv) \leq |W(x) \vv|\leq \sqrt{d}\,r(x,\vv).\]
\end{prop}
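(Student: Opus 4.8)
The plan is to reduce the statement to the pointwise, finite-dimensional fact that every norm on $\R^d$ is comparable to a Euclidean (inner-product) norm, and then take care of measurability. Fix $x \in \R^n$ and consider the norm $\vv \mapsto r(x,\vv)$ on $\R^d$. The key classical input is John's ellipsoid theorem (or, more simply, the fact that the set of norms on a fixed finite-dimensional space is parametrized by the convex body given by the unit ball, and every centrally symmetric convex body contains and is contained in a dilate of an ellipsoid): there is a positive definite matrix $V(x) \in \calS_d$, namely $V(x) = W(x)$, such that the Euclidean norm $\vv \mapsto |V(x)\vv|$ satisfies
\[
r(x,\vv) \leq |V(x)\vv| \leq \sqrt{d}\, r(x,\vv)
\]
for all $\vv \in \R^d$. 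The constant $\sqrt d$ is exactly what John's theorem gives for the distortion of an arbitrary norm ball in $\R^d$ from an ellipsoid; concretely, one takes the ellipsoid of maximal volume inscribed in the unit ball of $r(x,\cdot)$, and $W(x)$ is (the inverse square root of) the associated quadratic form. This establishes the inequality for each fixed $x$; the content of the proposition beyond this is that $x \mapsto W(x)$ can be chosen measurably.

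For the measurability, I would argue as follows. The maximal inscribed ellipsoid of a symmetric convex body is unique (this is the standard uniqueness half of John's theorem), so $W(x)$ is genuinely a well-defined function of $x$, not merely a selection. One then shows this function is measurable by exhibiting $W(x)$ as a pointwise limit, or as the unique solution of an optimization problem, whose data depend measurably on $x$. A clean way: the quantity $\log\det V$, maximized over $V \in \calS_d$ positive definite subject to $|V^{-1}\vv|_{r(x,\cdot)} \le 1$ for all $\vv$ — equivalently subject to finitely-many constraints after discretizing the unit sphere, then passing to a limit — is an upper semicontinuous function of the constraint data, and the argmax is unique; measurability of the constraint data ($x \mapsto r(x, \vv_j)$ for a countable dense set of directions $\vv_j$, each measurable by hypothesis) then transfers to measurability of $W(x)$ via a measurable selection theorem (Kuratowski–Ryll-Nardzewski) applied to the argmax set-valued map, which here is single-valued. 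Since this is precisely \cite[Theorem 4.11]{bownik_extrapolation_2022}, I would cite that proof rather than reproduce it in full.

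The main obstacle is the measurability, not the pointwise estimate: the pointwise bound is immediate from John's theorem, but verifying that the John ellipsoid depends measurably on the (merely measurable) norm function requires either a careful semicontinuity-plus-uniqueness argument feeding into a measurable selection theorem, or an explicit constructive description of $W(x)$ (e.g., via a convergent iterative scheme) whose measurability is manifest at each stage. One subtlety worth flagging: the hypothesis only says $r(x,\cdot)$ is a norm for each $x$ and that $r$ is jointly measurable, so one must be slightly careful that "$r(x,\vv) \le 1$ for all $\vv$ in a countable dense set" correctly captures the unit ball of $r(x,\cdot)$ — this uses continuity of $r(x,\cdot)$ in $\vv$, which is automatic since norms on $\R^d$ are continuous. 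Given that this result is quoted verbatim from \cite{bownik_extrapolation_2022}, the write-up should simply invoke it; the sketch above records how it is proved.
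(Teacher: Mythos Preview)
The paper does not prove this proposition: it is stated as a citation of \cite[Theorem 4.11]{bownik_extrapolation_2022}, with the remark that the pointwise version goes back to Goldberg and the measurability to Bownik--Cruz-Uribe. Your conclusion that one should simply invoke the cited reference is exactly what the paper does, and your sketch of the underlying argument (John's ellipsoid for the pointwise bound, uniqueness plus measurable selection for measurability) accurately reflects the content of that reference.
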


\begin{remark}
  Proposition~\ref{EllipsoidApprox} was first proved
  in~\cite[Proposition 1.2]{goldberg_matrix_2003}; however, this paper
  did not consider the question of the measurability of $W$; this was
  justified in~\cite{bownik_extrapolation_2022}.
\end{remark}

Given a matrix weight $W$ and a cube $Q\subset \Omega$, we now define
the associated reducing operator, which can be thought of as an
$\Lpp(\Omega)$ average of $W$ on $Q$.
For each $x \in \Omega$, define the norm 
$r(x,\cdot): \R^d \to [0,\infty)$ by $r(x,\vv) = |W(x)\vv|$. Let
$r^*(x,\cdot)$ be the dual norm to $r$, given by
$r^*(x,\vv)= |W^{-1}(x) \vv|$. Given a cube $Q$, define
$\langle r\rangle_{\pp,Q}: \R^d \to [0,\infty)$ 
by
\begin{equation}\label{AvgNorm}
  \langle r\rangle_{\pp,Q} (\vv)
  = |Q|^{-1/p_Q} \|r(\cdot,\vv)\chi_Q(\cdot)\|_{\Lpp(\Omega)}
  = |Q|^{-1/p_Q} \| \vv\chi_Q(\cdot)\|_{\Lpp(W,\Omega)},
\end{equation}
where $p_Q$ is the harmonic mean of $\pp$ on $Q$, defined by
\[ \frac{1}{p_Q} = \avgint_Q \frac{dx}{p(x)}. \]
Since $\|\cdot\|_{\Lpp(W,\Omega)}$ is a norm, it follows that
$\langle r \rangle_{\pp,Q}$ is a norm on $\R^d$. Hence, by Proposition
\ref{EllipsoidApprox}, there exists a positive-definite, self-adjoint
(constant) matrix $\calW_Q^\pp$ such that
$ \langle r\rangle_{\pp, Q} (\vv) \approx |\calW_Q^\pp \vv|$ for all
$\vv\in \R^d$. The matrix $\calW_Q^\pp$ is referred to as the reducing
operator associated to $r$ on $Q$. We will denote the reducing
operator associated to $r^*$ on $Q$ by $\overline{\calW}_Q^\cpp$, i.e., $\langle r^*\rangle_{\cpp,Q} (\vv)\approx |\overline{\calW}_Q^\cpp \vv|$.
\begin{remark}
If we use the definition of the $L^\pp$ average of a function
from~\cite{diening_lebesgue_2011}, we could alternatively define $\langle r\rangle_{\pp, Q}$ by
 \[  \langle r\rangle_{\pp,Q} (\vv)
   = \frac{\|r(\cdot,\vv)\chi_Q(\cdot)\|_{\Lpp(\Omega)}}
   {\|\chi_Q\|_{\Lpp(\Omega)}}. \]
 However, if $\pp \in LH(\Omega)$, then
 $\| \chi_Q\|_{\Lpp(\Omega)} \approx |Q|^{1/p_Q}$, and the two definitions
 of $\langle r \rangle_{\pp,Q}$ are equivalent.  In fact, for this equivalence it would suffice to
 assume the weaker condition that $\pp \in K_0(\Omega)$.  We refer the
 interested reader to~\cite[Chapter 4]{cruz-uribe_variable_2013} for
 more information.
\end{remark}

We now use reducing operators to give an equivalent characterization of $\calA_\pp$. 
\begin{prop}\label{ApdotReducingOps}
  Let $\pp \in \Pp(\Omega)$ and $W:\Omega \to \calS_d$ be a matrix
  weight. Then $W \in \calA_\pp$ if and only if
\begin{align}\label{ApdotReducingOps:ineq}
[W]_{\calA_\pp}^R = \sup_{Q\subset \Omega} |\calW_Q^\pp \overline{\calW}_Q^\cpp|_{\op} <\infty.
\end{align}
Moreover, $[W]_{\calA_\pp}^R \approx [W]_{\calA_\pp}$, with implicit constants depending only on $d$.
\end{prop}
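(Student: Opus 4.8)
The plan is to prove the equivalence $[W]_{\calA_\pp}^R \approx [W]_{\calA_\pp}$ by unwinding the definition of the matrix $\calA_\pp$ constant and inserting the reducing operators $\calW_Q^\pp$ and $\overline{\calW}_Q^\cpp$ in place of the integral averages. First I would recall that the defining quantity
\[
[W]_{\calA_\pp} = \sup_Q |Q|^{-1} \big\| \|\,|W(x)W^{-1}(\cdot)|_{\op}\chi_Q(\cdot)\|_{\Lcpp(\Omega)}\,\chi_Q(x)\big\|_{\Lpp_x(\Omega)}
\]
is, for a fixed cube $Q$, an iterated variable-Lebesgue norm of the function $(x,y)\mapsto |W(x)W^{-1}(y)|_{\op}$. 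The key is to use Lemma~\ref{opNorm:equiv}: for a fixed orthonormal basis $\{\ve_k\}_{k=1}^d$ of $\R^d$,
\[
|W(x)W^{-1}(y)|_{\op} \approx \sum_{k=1}^d |W(x)W^{-1}(y)\ve_k|,
\]
with constants depending only on $d$. This reduces the operator-norm expression to a sum over $k$ of quantities of the form $|W(x)\vv_k(y)|$ with $\vv_k(y) = W^{-1}(y)\ve_k$, each of which is an $\Lpp(W,\Omega)$-type norm in $x$.

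The heart of the argument is then the observation that, because $W^{-1}(y)\ve_k$ is a \emph{constant} vector for fixed $y$, we can take the inner $\Lpp_x$ norm first and identify it with $\langle r\rangle_{\pp,Q}$. Concretely, I would interchange the order so that for each $k$ and each $y$,
\[
|Q|^{-1/p_Q}\big\| |W(x)W^{-1}(y)\ve_k|\,\chi_Q(x)\big\|_{\Lpp_x(\Omega)}
= |Q|^{-1/p_Q}\big\| W^{-1}(y)\ve_k\,\chi_Q(x)\big\|_{\Lpp_x(W,\Omega)}
= \langle r\rangle_{\pp,Q}\big(W^{-1}(y)\ve_k\big)
\approx |\calW_Q^\pp W^{-1}(y)\ve_k|,
\]
using \eqref{AvgNorm} and the defining property of $\calW_Q^\pp$. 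Then, taking the outer $\Lcpp_y$ norm over $Q$ and dividing by $|Q|^{1/p'_Q}$, and applying Lemma~\ref{opNorm:equiv} a second time (now to $\calW_Q^\pp W^{-1}(y)$, reading $\vv \mapsto \calW_Q^\pp W^{-1}(y)\vv$ as a matrix), I would recognize $|Q|^{-1/p'_Q}\|\,|\calW_Q^\pp W^{-1}(y)|_{\op}\chi_Q(y)\|_{\Lcpp_y} \approx \langle r^*\rangle_{\cpp,Q}$ applied coordinatewise, hence $\approx |\calW_Q^\pp \overline{\calW}_Q^\cpp|_{\op}$. The normalization $|Q|^{-1} = |Q|^{-1/p_Q}\cdot|Q|^{-1/p'_Q}$ (using $1/p_Q + 1/p'_Q = 1$) makes the powers of $|Q|$ match up exactly. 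Taking the supremum over all cubes $Q$ then gives both inequalities.

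The main obstacle is the careful bookkeeping of the iterated norms and the justification of interchanging the roles of $x$ and $y$: one must be sure that applying Lemma~\ref{opNorm:equiv} pointwise in the hidden variable, then taking a norm, is legitimate (this is fine because the bound is a pointwise two-sided comparison with $d$-dependent constants, so it survives taking any monotone norm), and that the homogeneity of the reducing-operator norms lets one pull the constant vectors $W^{-1}(y)\ve_k$ through the $\Lpp_x$ norm. A secondary subtlety is that the reducing operator $\calW_Q^\pp$ is defined only up to the $\sqrt d$-distortion of Proposition~\ref{EllipsoidApprox}, so all equivalences here are genuine $\approx$ with $d$-dependent constants, not equalities; tracking this is what yields the stated dependence of the implicit constants on $d$ alone. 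No log-Hölder hypothesis is needed, since the equivalence $\|\chi_Q\|_{\Lpp}\approx|Q|^{1/p_Q}$ is avoided by working directly with the $|Q|^{-1/p_Q}$-normalized definition \eqref{AvgNorm}.
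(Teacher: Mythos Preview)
Your overall strategy---decompose $|W(x)W^{-1}(y)|_{\op}$ via Lemma~\ref{opNorm:equiv}, replace the variable-Lebesgue averages by reducing operators, and recombine---is exactly the paper's approach, and your observation that $|Q|^{-1}=|Q|^{-1/p_Q}|Q|^{-1/p'_Q}$ is the right bookkeeping device. However, there is a genuine gap at the step you flag as ``interchange the order.''

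In the definition of $[W]_{\calA_\pp}$ the \emph{inner} norm is $\Lcpp_y$ and the \emph{outer} norm is $\Lpp_x$. Your decomposition $|W(x)W^{-1}(y)|_{\op}\approx\sum_k|W(x)W^{-1}(y)\ve_k|$ produces vectors $W^{-1}(y)\ve_k$ that are constant in $x$ for fixed $y$; to exploit this you must take the $\Lpp_x$ norm first, which is the outer norm. Swapping iterated variable-Lebesgue norms is not valid in general (there is no two-sided Minkowski integral inequality, even in the constant-exponent case), and your remark that ``pointwise two-sided comparisons survive any monotone norm'' justifies only the Lemma~\ref{opNorm:equiv} step, not the swap of the order of integration. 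So as written the argument does not close.

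The paper avoids the swap entirely by applying Lemma~\ref{opNorm:equiv} in the transposed direction: since $W(x)$ and $W^{-1}(y)$ are self-adjoint, $|W(x)W^{-1}(y)|_{\op}=|W^{-1}(y)W(x)|_{\op}\approx\sum_i|W^{-1}(y)W(x)\ve_i|$. Now the vector $W(x)\ve_i$ is constant in $y$ for fixed $x$, so the genuinely inner $\Lcpp_y$ norm is immediately $\approx|Q|^{1/p'_Q}|\overline{\calW}_Q^\cpp W(x)\ve_i|$. Summing, reapplying Lemma~\ref{opNorm:equiv}, commuting $|\overline{\calW}_Q^\cpp W(x)|_{\op}=|W(x)\overline{\calW}_Q^\cpp|_{\op}$, and then repeating the same trick with the outer $\Lpp_x$ norm gives $|\calW_Q^\pp\overline{\calW}_Q^\cpp|_{\op}$ directly, with no interchange of iterated norms needed. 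Your proof becomes correct once you make this one-line change in the direction of the basis-vector decomposition.
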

\begin{proof}
  Let $Q\subseteq \Omega$ be a cube and $\{\ve_i\}_{i=1}^d$ be the
  standard coordinate basis in $\R^d$. By Lemma~\ref{opNorm:equiv},
  inequality \eqref{SumNorm:equiv}, and the definition of the reducing
  operator $\overline{\calW}_Q^\cpp$, we have
\begin{align*}
  &|Q|^{-1}\big\|\||W(x)W^{-1}(\cdot)|_{\op}\chi_Q(\cdot)\|_{\Lpp(\Omega)}
    \chi_Q(x) \big\|_{\Lcpp_x(\Omega)}\\
  &\qquad 	 \approx |Q|^{-1} \bigg\|
    \bigg\|\sum_{i=1}^d |W^{-1}(\cdot) W(x)
    \ve_i|\chi_Q(\cdot)\bigg\|_{\Lcpp(\Omega)}
    \chi_Q(x)\bigg\|_{\Lpp_x(\Omega)}\\
  &\qquad 	 \approx \sum_{i=1}^d|Q|^{-1}
    \big\|\||W^{-1}(\cdot)W(x)\ve_i|\chi_Q(\cdot)\|_{\Lcpp(\Omega)}
    \chi_Q(x)\big\|_{\Lpp_x(\Omega)}\\
	&\qquad	\approx \sum_{i=1}^d |Q|^{-1/p_Q}
   \||\overline{\calW}_Q^\cpp W(\cdot)\ve_i|
   \chi_Q(\cdot)\|_{\Lpp(\Omega)}\\
	& \qquad	\approx |Q|^{-1/p_Q}\|\sum_{i=1}^d
   |\overline{\calW}_Q^\cpp W(\cdot)\ve_i|
   \chi_Q(\cdot)\|_{\Lpp(\Omega)}\\
	&\qquad		\approx |Q|^{-1/p_Q}
   \||\overline{\calW}_Q^\cpp W(\cdot)|_{\op}
   \chi_Q(\cdot)\|_{\Lpp(\Omega)}.
\end{align*}

Since $\overline{\calW}_Q^\cpp$ and $W$ are selfadjoint, we can
commute them within the operator norm. Then, by Lemma
\ref{opNorm:equiv}, inequality \eqref{SumNorm:equiv}, and the
definition of the reducing operator $\calW_Q^\pp$, we get
\begin{align*}
&|Q|^{-1/p_Q}\||W(\cdot)\overline{\calW}_Q^\pp|_{\op}\chi_Q(\cdot)\|_{\Lpp(\Omega)}\\
	 & \qquad\approx |Q|^{-1/p_Q} \left\|\sum_{i=1}^d |W(\cdot)\overline{\calW}_Q^\cpp\ve_i|\chi_Q(\cdot) \right\|_{\Lpp(\Omega)}\\
	 & \qquad \approx \sum_{i=1}^d |Q|^{-1/p_Q} \||W(\cdot)\overline{\calW}_Q^\cpp \ve_i|\chi_Q(\cdot)\|_{\Lpp(\Omega)}\\
	 & \qquad \approx \sum_{i=1}^d |\calW_Q^\pp \overline{\calW}_Q^\cpp \ve_i|\\
	  & \qquad\approx |\calW_Q^\pp \overline{\calW}_Q^\cpp|_{\op}.
\end{align*}
The implicit constants do not depend on $Q$, and so we have \eqref{ApdotReducingOps:ineq}.
\end{proof}
\begin{proof}[Proof of Theorem~\ref{AvgOpBddEquivApp}]
We first prove the forward implication. Let $W \in \calA_\pp$ and $Q\subseteq \Omega$ be a cube. Fix a function $\vf\in \Lpp(W,\Omega)$. By Lemma \ref{Holder} and the definition of $\calA_\pp$, 
\begin{align*}
  \|A_Q \vf\|_{\Lpp(W,\Omega)}
  & = \bigg\| W(x) \dashint_Q \vf(y)\, dy \;\chi_Q(x)\bigg\|_{\Lpp_x(\Omega)}\\
	& \leq |Q|^{-1}\bigg\|  \int_Q |W(x)W^{-1}(y) W(y) \vf(y)| \,
   dy
   \;\chi_Q(x) \bigg\|_{\Lpp_x(\Omega)}\\
	& \leq |Q|^{-1} \bigg\| \int_Q |W(x)W^{-1}(y)|_{\op} |
   W(y)\vf(y)| \,dy
   \; \chi_Q(x)\bigg\|_{\Lpp_x(\Omega)}\\
	& \leq 4|Q|^{-1} \big\|  \||W(x)W^{-1}(y)|_{\op}
   \chi_Q(y)\|_{\Lcpp_y(\Omega)}
   \|\vf\|_{\Lpp(W,\Omega)} \;\chi_Q(x)\big\|_{\Lpp_x(\Omega)}\\
	&\leq 4[W]_{A_\pp} \| \vf\|_{\Lpp(W,\Omega)}.
\end{align*}
Thus, $A_Q : \Lpp(W,\Omega) \to \Lpp(W,\Omega)$ uniformly for all cubes $Q \subseteq \Omega$.

To prove the converse, we will show that $[W]_{\calA_\pp}^R
<\infty$. Fix a cube  $Q\subseteq \Omega$ and fix $\vv \in \R^d$
with $|\vv|=1$. By the equivalence between the reducing operator
$\overline{\calW}_Q^\cpp$ and the $\Lcpp(W^{-1},\Omega)$ average,
Proposition \ref{VectorDuality}, the self-adjointness of $W^{-1}$ and
$\calW_Q^\pp$, and the linearity of the dot product, we have that for some
$\vg \in \Lpp(\Omega;\R^d)$ with $\|\vg\|_{\Lpp(\Omega;\R^d)}\leq d$,
\begin{align*}
|\overline{\calW}_Q^\cpp \calW_Q^\pp \vv| &\leq \sqrt{d} |Q|^{-1/p'_Q} \||W^{-1}(\cdot)\calW_Q^\pp \vv|\chi_Q(\cdot)\|_{\Lcpp(\Omega)}\\
	& \leq 4\sqrt{d}|Q|^{-1/p'_Q} \int_Q W^{-1}(y) \calW_Q^\pp \vv \cdot \vg(y) \,dy\\
	& = 4\sqrt{d}|Q|^{-1/p'_Q}  \int_Q \vv \cdot\calW_Q^\pp W^{-1}(y) \vg(y) \,dy\\
	& = 4\sqrt{d}|Q|^{-1/p'_Q} \vv \cdot \calW_Q^\pp \int_Q W^{-1}(y) \vg(y)\, dy\\
	& \leq 4\sqrt{d}|Q|^{-1/p'_Q} \left| \calW_Q^\pp \int_Q W^{-1}(y) \vg(y) \,dy\right|.
\end{align*}
Now, using the equivalence between the reducing operator $\calW_Q^\pp$
and the $\Lpp(W,\Omega)$ average, we have that the last expression is
bounded by
\begin{multline*}
4d|Q|^{-1}\left\| W(x) \int_Q W^{-1}(y)\vg(y) dy\;
  \chi_Q(x)\right\|_{\Lpp_x(\Omega)} \\
= 4d\|W(x)A_Q(W^{-1}\vg)(x)\|_{\Lpp_x(\Omega)} 
	 = 4d \|A_Q(W^{-1} \vg)\|_{\Lpp(W,\Omega)}.
\end{multline*}

By assumption, $A_Q: \Lpp(W,\Omega) \to \Lpp(W,\Omega)$ uniformly over all cubes $Q\subseteq \Omega$. Thus,
\[\|A_Q(W^{-1}\vg)\|_{\Lpp(W,\Omega)} \leq \| A_Q\| \|W^{-1} \vg\|_{\Lpp(W,\Omega)} = \|A_Q\| \|\vg\|_{\Lpp(\Omega;\R^d)}.\]
Since $\|\vg\|_{\Lpp(\Omega;\R^d)} \leq d$, we have that $[W]_{\calA_\pp}^R < 4d^2
\|A_Q\| <\infty$. Hence,  by Proposition~\ref{ApdotReducingOps}, $W\in \calA_\pp$.
\end{proof}
\begin{proof}[Proof of Theorem~\ref{SumAvgOpBdd}]
  Fix a collection $\calQ$ of pairwise disjoint cubes contained in
  $\Omega$, and fix $\vf \in \Lpp(W,\Omega)$.  Extend $\vf $ to equal
  $0$ on $\R^n \bk \Omega$. By
  Proposition~\ref{VectorDuality} applied to $WA_\calQ \vf$, there
  exists a function $\vh \in \Lcpp(\Omega;\R^d)$ with
  $\| \vh\|_{\Lcpp(\Omega;\R^d)}\leq d$ such that
\[\| WA_{\calQ} \vf\|_{\Lpp(\Omega;\R^d)} \leq 4 \int_{\Omega} W(x) A_\calQ \vf(x) \cdot \vh(x) \, dx.\] 
Let $\vg = W\vh$. Then, $W^{-1}\vg\in \Lcpp(\Omega;\R^d)$ with
$\|W^{-1}\vg\|_{\Lcpp(\Omega;\R^d)}\leq d$. Thus,
\begin{align*}
\|WA_\calQ \vf\|_{\Lpp(\Omega;\R^d)}&\leq  4\int_{\Omega} W(x) A_\calQ \vf(x)\cdot W^{-1}(x)\vg(x) \, dx \\
 	& \leq 4\sum_{Q\in \calQ} \int_Q \dashint_Q |W(x) W^{-1}(y)W(y)\vf(y)| \, dy\, |W^{-1}(x) \vg(x)| \, dx\\
	& \leq 4 \sum_{Q\in \calQ} \int_Q \dashint_Q|W(x)W^{-1}(y)|_{\op}|W(y)\vf(y)| \, dy \, W^{-1}(x) \vg(x) \, dx.
\end{align*}
We next use Lemma \ref{Holder} twice, and then, since
$\pp \in LH(\Omega)$, by Lemma \ref{LH-G:lemma} we can use property
$\calG$ to bound the previous expression by
\begin{align*}
 & 16\sum_{Q\in \calQ} \int_{Q}|Q|^{-1}\||W(x)W^{-1}(y)|_{\op}\chi_Q(y)\|_{\Lcpp_y(\Omega)}\|\vf \chi_Q\|_{\Lpp(W,\Omega)} W^{-1}(x)\vg(x) \, dx\\
	& \qquad \leq 64[W]_{\calA_\pp} \sum_{Q\in\calQ}\|\vf\chi_Q\|_{\Lpp(W,\Omega)} \|\vg\chi_Q\|_{\Lcpp(W^{-1},\Omega)}  \\
	& \qquad\leq 64 [W]_{\calA_\pp} C(n,\pp)\|\vf\|_{\Lpp(W,\Omega)}\|\vg\|_{\Lcpp(W^{-1},\Omega)}\\
	& \qquad\leq 64[W]_{\calA_\pp}C(n,d,\pp)\|\vf\|_{\Lpp(W,\Omega)}.
\end{align*}
This completes the forward implication.

To prove the converse, observe that if \eqref{SumAvgOpBdd:ineq} holds for any collection $\calQ$ of pairwise disjoint cubes $Q\subseteq \Omega$, then it holds for any individual cube $Q\subseteq \Omega$. Thus, $A_Q: \Lpp(W,\Omega)\to \Lpp(W,\Omega)$ uniformly for all cubes $Q\subseteq \Omega$, and so by Theorem \ref{AvgOpBddEquivApp}, $W \in \calA_\pp$.
\end{proof}

\medskip

Finally, we prove that the operator norm of a matrix in $\A_\pp$
is a scalar weight in $A_\pp$.  In the constant exponent case, this
was proved in~\cite[Section 4]{cruz-uribe_matrix_2016}.

\begin{prop}\label{opNormApdot}
Given a matrix weight $W: \Omega \to \calS_d$, let $v(x)=|W(x)|_{\op}$. If $W\in \calA_\pp$, then $v \in A_\pp$.
\end{prop}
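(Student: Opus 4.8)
The plan is to relate the scalar $A_\pp$ condition for $v=|W|_\op$ to the matrix $\calA_\pp$ condition for $W$ via the reducing operators from Proposition~\ref{ApdotReducingOps}. First I would observe that the scalar $A_\pp$ condition~\eqref{eqn:scalar-App} for $v$ (up to the usual equivalence between using $\|\chi_Q\|_{\Lpp}$-normalization and the $|Q|^{1/p_Q}$-normalization, which holds because $\pp\in LH$, hence property $\calG$ and $\|\chi_Q\|_{\Lpp}\approx |Q|^{1/p_Q}$) amounts to bounding, uniformly in $Q$,
\[
  |Q|^{-1/p_Q}\,\||W(\cdot)|_\op\chi_Q\|_{\Lpp(\Omega)}\;\cdot\;|Q|^{-1/p'_Q}\,\||W^{-1}(\cdot)|_\op\chi_Q\|_{\Lcpp(\Omega)}.
\]
So the key step is to show each of these two factors is comparable to the operator norm of the corresponding reducing operator: namely $|Q|^{-1/p_Q}\||W(\cdot)|_\op\chi_Q\|_{\Lpp(\Omega)}\approx |\calW_Q^\pp|_\op$ and $|Q|^{-1/p'_Q}\||W^{-1}(\cdot)|_\op\chi_Q\|_{\Lcpp(\Omega)}\approx |\overline{\calW}_Q^\cpp|_\op$, with constants depending only on $d$.

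The argument for the first comparison runs through Lemma~\ref{opNorm:equiv} exactly as in the proof of Proposition~\ref{ApdotReducingOps}: using the standard basis $\{\ve_i\}_{i=1}^d$,
\[
  |Q|^{-1/p_Q}\||W(\cdot)|_\op\chi_Q\|_{\Lpp(\Omega)}
  \approx |Q|^{-1/p_Q}\Bigl\|\sum_{i=1}^d |W(\cdot)\ve_i|\chi_Q\Bigr\|_{\Lpp(\Omega)}
  \approx \sum_{i=1}^d |Q|^{-1/p_Q}\||W(\cdot)\ve_i|\chi_Q\|_{\Lpp(\Omega)},
\]
where the last step uses~\eqref{SumNorm:equiv}; then by the definition~\eqref{AvgNorm} of $\langle r\rangle_{\pp,Q}$ and the defining property of the reducing operator, $|Q|^{-1/p_Q}\||W(\cdot)\ve_i|\chi_Q\|_{\Lpp(\Omega)} = \langle r\rangle_{\pp,Q}(\ve_i)\approx |\calW_Q^\pp\ve_i|$, and summing and applying Lemma~\ref{opNorm:equiv} once more gives $\approx |\calW_Q^\pp|_\op$. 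The same reasoning applied to $r^*(x,\vv)=|W^{-1}(x)\vv|$ and the exponent $\cpp$ yields $|Q|^{-1/p'_Q}\||W^{-1}(\cdot)|_\op\chi_Q\|_{\Lcpp(\Omega)}\approx |\overline{\calW}_Q^\cpp|_\op$.

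Combining these two comparisons, the scalar quantity defining $[v]_{A_\pp}$ (in the normalized form) is, uniformly in $Q$, comparable to $|\calW_Q^\pp|_\op\,|\overline{\calW}_Q^\cpp|_\op$. Since $\calW_Q^\pp$ and $\overline{\calW}_Q^\cpp$ are self-adjoint and positive definite, $|\calW_Q^\pp|_\op\,|\overline{\calW}_Q^\cpp|_\op$ is comparable to (indeed bounded by, and up to dimensional constants bounded below by) $|\calW_Q^\pp\overline{\calW}_Q^\cpp|_\op$, which is finite uniformly in $Q$ precisely because $W\in\calA_\pp$, by Proposition~\ref{ApdotReducingOps}. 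Taking the supremum over all cubes $Q\subseteq\Omega$ gives $[v]_{A_\pp}\lesssim [W]_{\calA_\pp}^R\approx [W]_{\calA_\pp}<\infty$, so $v\in A_\pp$. I expect the only delicate point to be the last step: the inequality $|\calW_Q^\pp|_\op|\overline{\calW}_Q^\cpp|_\op \lesssim |\calW_Q^\pp\overline{\calW}_Q^\cpp|_\op$ is not true for general matrices, so one must use positive-definiteness — e.g. test $\calW_Q^\pp\overline{\calW}_Q^\cpp$ against a unit eigenvector of $\overline{\calW}_Q^\cpp$ realizing its operator norm, or symmetrize to $\overline{\calW}_Q^\cpp(\calW_Q^\pp)^2\overline{\calW}_Q^\cpp$ and compare operator norms — together with the fact that $|A|_\op \le |AB|_\op |B^{-1}|_\op$ and the reducing operators are invertible; this bookkeeping with dimensional constants is the main thing to get right.
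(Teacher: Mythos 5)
Your argument breaks at precisely the step you flag as delicate, and it cannot be repaired by the techniques you list. The inequality $|\calW_Q^\pp|_\op\,|\overline{\calW}_Q^\cpp|_\op \lesssim |\calW_Q^\pp\overline{\calW}_Q^\cpp|_\op$ is false for self-adjoint positive-definite matrices, even for reducing operators of a matrix $\calA_\pp$ weight. Take $\pp\equiv 2$, $d=2$, and the constant weight $W=\mathrm{diag}(M,1/M)$ with $M$ large: then $W\in\calA_2$ trivially, and for any cube $Q$ one has $\calW_Q^2 = W$ and $\overline{\calW}_Q^2 = W^{-1}$, so $\calW_Q^2\overline{\calW}_Q^2=I$ and $|\calW_Q^2\overline{\calW}_Q^2|_\op=1$, whereas $|\calW_Q^2|_\op\,|\overline{\calW}_Q^2|_\op=M^2$. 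Testing against eigenvectors, symmetrizing, or using $|A|_\op\le|AB|_\op\,|B^{-1}|_\op$ all reintroduce the condition number of a reducing operator, which is unbounded over $\calA_\pp$.

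The upstream cause is the silent replacement of $v^{-1}=|W(\cdot)|_\op^{-1}$ by $|W^{-1}(\cdot)|_\op$ in your first display. These agree only when $W(x)$ is a scalar multiple of the identity; in general one only has $v^{-1}\le|W^{-1}|_\op$, and the gap can be enormous. In the example above $v^{-1}\equiv 1/M$ and $|W^{-1}|_\op\equiv M$, and $v\in A_2$ with constant $1$, yet the surrogate quantity you control is $\approx M^2$. So your chain produces an upper bound for $[v]_{A_\pp}$ that is not itself bounded by $[W]_{\calA_\pp}$, and it cannot close. (Your parenthetical remark that $\pp\in LH$ is needed for the normalization is also unnecessary: the normalization $|Q|^{-1}$ in the $A_\pp$ condition equals $|Q|^{-1/p_Q}|Q|^{-1/p'_Q}$ exactly, since $1/p_Q+1/p'_Q=1$; the paper's Proposition~\ref{opNormApdot} correctly requires no log-H\"older hypothesis.)

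The paper's proof avoids $v^{-1}$ entirely. It writes $v(x)\approx\sum_{i=1}^d|W(x)\ve_i|$ using Lemma~\ref{opNorm:equiv}, proves that each scalar weight $|W\ve_i|$ lies in $A_\pp$ by showing the averaging operator $A_Q$ is uniformly bounded on $\Lpp(|W\ve_i|,\Omega)$ via the pointwise bound $|W(x)\ve_i|\le|W(x)W^{-1}(y)|_\op|W(y)\ve_i|$ together with Proposition~\ref{AvgOpBddEquivApp-Scalar}, and then invokes Lemma~\ref{AppAddition} that a finite sum of $A_\pp$ weights is in $A_\pp$. Your opening comparison, that $|Q|^{-1/p_Q}\||W|_\op\chi_Q\|_{\Lpp(\Omega)}\approx|\calW_Q^\pp|_\op$ with the dual analogue, is correct and worth knowing, but it does not feed into a proof along the route you propose.
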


To prove this, we need two preliminary results.  First, we need the
scalar version of Theorem~\ref{AvgOpBddEquivApp}.

\begin{prop}\cite[Proposition 3.7]{Cruz-Uribe-FiorenzaNeugebauer-2012}\label{AvgOpBddEquivApp-Scalar}
Let $\pp \in \Pp(\Omega)$ and $w: \Omega \to [0,\infty)$ be a weight. Then $w\in A_\pp$ if and only if $A_Q : \Lpp(w,\Omega) \to \Lpp(w,\Omega)$ uniformly for all cubes $Q\subseteq \Omega$. Moreover,
\[ \|A_Q\| \leq  4[w]_{A_\pp} \leq C \|A_Q\|.\]
\end{prop}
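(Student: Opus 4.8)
The plan is to recognize this as the scalar ($d=1$) case of Theorem~\ref{AvgOpBddEquivApp} and to prove the two implications directly; for $d=1$ the argument simplifies a great deal, since the reducing operators collapse to nonnegative scalars and all the $\R^d$--norm comparisons disappear, so one needs only H\"older's inequality (Lemma~\ref{Holder}) and the scalar duality estimate \eqref{ScalarDuality}.

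For the forward implication, fix $w\in A_\pp$, a cube $Q\subseteq\Omega$ and $f\in\Lpp(w,\Omega)$. Since $A_Qf$ is a constant multiple of $\chi_Q$, we have
\[
\|A_Qf\|_{\Lpp(w,\Omega)}=|Q|^{-1}\Big|\int_Q f(y)\,dy\Big|\,\|w\chi_Q\|_{\Lpp(\Omega)}.
\]
Writing $f=w^{-1}\cdot(wf)$ on $Q$ and applying Lemma~\ref{Holder} gives $\big|\int_Q f\big|\le 4\,\|w^{-1}\chi_Q\|_{\Lcpp(\Omega)}\,\|f\|_{\Lpp(w,\Omega)}$, and combining the two estimates with the definition of $[w]_{A_\pp}$ yields $\|A_Qf\|_{\Lpp(w,\Omega)}\le 4[w]_{A_\pp}\|f\|_{\Lpp(w,\Omega)}$ uniformly in $Q$, i.e. $\|A_Q\|\le 4[w]_{A_\pp}$.

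For the converse, I would assume $A:=\sup_Q\|A_Q\|<\infty$ and fix a cube $Q$. Applying \eqref{ScalarDuality} with the exponent $\cpp$ (and using $(\cpp)'=\pp$), there is $g\in\Lpp(\Omega)$ with $\|g\|_{\Lpp(\Omega)}\le1$, which we may take nonnegative because $w^{-1}\chi_Q\ge0$, such that $\|w^{-1}\chi_Q\|_{\Lcpp(\Omega)}\le 4\int_Q w^{-1}(y)g(y)\,dy$. Since $|Q|^{-1}\int_Q w^{-1}g$ equals $A_Q(w^{-1}g)$ pointwise on $Q$, multiplying through by $|Q|^{-1}\|w\chi_Q\|_{\Lpp(\Omega)}$ gives
\[
|Q|^{-1}\|w\chi_Q\|_{\Lpp(\Omega)}\,\|w^{-1}\chi_Q\|_{\Lcpp(\Omega)}\le 4\big\|w\,A_Q(w^{-1}g)\big\|_{\Lpp(\Omega)}=4\|A_Q(w^{-1}g)\|_{\Lpp(w,\Omega)}\le 4A\,\|g\|_{\Lpp(\Omega)}\le 4A,
\]
using $\|w^{-1}g\|_{\Lpp(w,\Omega)}=\|g\|_{\Lpp(\Omega)}$. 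Taking the supremum over all cubes $Q$ gives $[w]_{A_\pp}\le 4A$, which is $[w]_{A_\pp}\le C\|A_Q\|$ with $C=4$.

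I do not expect a serious obstacle: the computation is routine, and the only points needing attention are getting the duality on the correct side---one must bound the $\Lcpp$--norm of $w^{-1}\chi_Q$ by testing against $\Lpp$--functions, which is precisely where the identity $(\cpp)'=\pp$ enters---and the degenerate cubes on which $\|w\chi_Q\|_{\Lpp(\Omega)}$ or $\|w^{-1}\chi_Q\|_{\Lcpp(\Omega)}$ is infinite, where the same estimates read in the extended sense force $\|A_Q\|=\infty$, so the equivalence still holds. As in Theorem~\ref{AvgOpBddEquivApp}, no property $\calG$ or log-H\"older regularity is needed here.
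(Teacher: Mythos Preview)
The paper does not give its own proof of this proposition; it is simply cited from \cite{Cruz-Uribe-FiorenzaNeugebauer-2012}. So there is nothing to compare against directly.

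That said, your argument is correct and is exactly the $d=1$ specialization of the paper's proof of Theorem~\ref{AvgOpBddEquivApp}. The forward implication matches the matrix proof line for line (with $|W(x)W^{-1}(y)|_{\op}$ collapsing to $w(x)w(y)^{-1}$). For the converse, the paper's matrix proof passes through reducing operators and Proposition~\ref{ApdotReducingOps}; when $d=1$ the reducing operators $\calW_Q^\pp$ and $\overline{\calW}_Q^\cpp$ are just the scalars $|Q|^{-1/p_Q}\|w\chi_Q\|_{\Lpp}$ and $|Q|^{-1/p'_Q}\|w^{-1}\chi_Q\|_{\Lcpp}$, and the chain of inequalities in the paper's converse argument reduces precisely to your duality computation. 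Your remark about the degenerate case is also handled correctly: if $\|w^{-1}\chi_Q\|_{\Lcpp}=\infty$ then the associate-norm characterization yields functions $g$ with $\|g\|_{\Lpp}\le 1$ and $\int_Q w^{-1}g$ arbitrarily large, which forces $\|A_Q\|=\infty$ via the same test functions $w^{-1}g$.
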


Second, we need a technical lemma on $A_\pp$ weights. 

\begin{lemma}\label{AppAddition}
Given $\pp \in \Pp(\Omega)$, if $w_1, \ldots, w_d\in A_\pp$, then, $\sum_{j=1}^d w_j \in A_\pp$.
\end{lemma}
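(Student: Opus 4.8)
The plan is to reduce the claim to the averaging-operator characterization of scalar $A_\pp$ given in Proposition~\ref{AvgOpBddEquivApp-Scalar}, rather than attempting to manipulate the $A_\pp$ constants in \eqref{eqn:scalar-App} directly (which is awkward because the $\Lpp$ and $\Lcpp$ norms are not additive over sums of weights). By induction it suffices to treat $d=2$: if $w_1,w_2\in A_\pp$, then $w_1+w_2\in A_\pp$. By Proposition~\ref{AvgOpBddEquivApp-Scalar} we must show that $A_Q$ is bounded on $\Lpp(w_1+w_2,\Omega)$ uniformly in $Q$.

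First I would record the elementary pointwise comparison of norms on $\R^2$: for $a,b\ge 0$ one has $\max(a,b)\le a+b\le 2\max(a,b)$, hence for a scalar function $f$,
\begin{equation*}
  \|f(w_1+w_2)\|_{\Lpp(\Omega)} \approx \|fw_1\|_{\Lpp(\Omega)} + \|fw_2\|_{\Lpp(\Omega)},
\end{equation*}
with absolute implicit constants; this is exactly the kind of estimate recorded in \eqref{SumNorm:equiv} applied to the pair $fw_1,fw_2$. In particular $\|f\|_{\Lpp(w_1+w_2,\Omega)} \approx \|f\|_{\Lpp(w_1,\Omega)} + \|f\|_{\Lpp(w_2,\Omega)}$, so $\Lpp(w_1+w_2,\Omega)=\Lpp(w_1,\Omega)\cap\Lpp(w_2,\Omega)$ with equivalent norms.

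Next, fix a cube $Q\subseteq\Omega$ and $f\in\Lpp(w_1+w_2,\Omega)$. Using the norm equivalence above and then the fact that $A_Q$ is bounded on each $\Lpp(w_j,\Omega)$ with norm at most $4[w_j]_{A_\pp}$ by Proposition~\ref{AvgOpBddEquivApp-Scalar},
\begin{equation*}
  \|A_Q f\|_{\Lpp(w_1+w_2,\Omega)} \lesssim \sum_{j=1}^2 \|A_Q f\|_{\Lpp(w_j,\Omega)}
    \le 4\sum_{j=1}^2 [w_j]_{A_\pp}\|f\|_{\Lpp(w_j,\Omega)}
    \lesssim \big([w_1]_{A_\pp}+[w_2]_{A_\pp}\big)\|f\|_{\Lpp(w_1+w_2,\Omega)},
\end{equation*}
with the last step again by the norm equivalence. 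Since the bound is uniform in $Q$, Proposition~\ref{AvgOpBddEquivApp-Scalar} gives $w_1+w_2\in A_\pp$. Iterating $d-1$ times proves the general case. I expect no serious obstacle here; the only point requiring a little care is confirming that $w_1+w_2$ is again a weight (locally integrable and a.e.\ positive), which is immediate, and tracking that all implicit constants are absolute so that the induction does not blow up — though since the statement is only qualitative, even a constant depending on $d$ is harmless.
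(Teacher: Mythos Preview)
Your argument is correct, but the paper proves the lemma by a shorter, direct manipulation of the $A_\pp$ constant \eqref{eqn:scalar-App} --- precisely the route you dismissed as awkward. The key observation you are missing is the pointwise bound $\big(\sum_{j=1}^d w_j\big)^{-1} \le w_i^{-1}$ for every $i$, which tames the $\Lcpp$ factor: combined with the triangle inequality on the $\Lpp$ factor, one gets immediately
\[
|Q|^{-1}\Big\|\chi_Q\Big(\textstyle\sum_j w_j\Big)^{-1}\Big\|_{\Lcpp}\Big\|\chi_Q\textstyle\sum_i w_i\Big\|_{\Lpp}
\le \sum_{i=1}^d |Q|^{-1}\|\chi_Q w_i^{-1}\|_{\Lcpp}\|\chi_Q w_i\|_{\Lpp}
\le \sum_{i=1}^d [w_i]_{A_\pp}.
\]
This avoids induction, avoids invoking Proposition~\ref{AvgOpBddEquivApp-Scalar}, and yields the clean quantitative bound $[\sum_j w_j]_{A_\pp}\le \sum_j [w_j]_{A_\pp}$. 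Your approach, while perfectly valid, incurs constants from the norm equivalence and from passing back and forth through the averaging-operator characterization, and the induction makes the dependence on $d$ less transparent.
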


\begin{proof}
  Fix $w_1, \ldots, w_d\in A_\pp$. Since weights are positive almost
  everywhere, $\left( \sum_{j=1}^d w_j \right)^{-1} \leq w_i^{-1}$ for
  each $i=1,\ldots, d$. If we comine this with the triangle inequality
  and the definition of $A_\pp$, we have for all cubes
  $Q\subseteq \Omega$,
\begin{align*}
&  |Q|^{-1}\bigg\|\chi_Q\bigg(\sum_{j=1}^d
                 w_j\bigg)^{-1}\bigg\|_{\Lcpp(\Omega)}
                 \bigg\| \chi_Q\bigg( \sum_{i=1}^d w_i\bigg)\bigg\|_{\Lpp(\Omega)}\\
  & \qquad \leq  \sum_{i=1}^d |Q|^{-1}
    \bigg\| \chi_Q \bigg( \sum_{j=1}^d w_j\bigg)^{-1}
    \bigg\|_{\Lcpp(\Omega)}
    \| \chi_Q w_i\|_{\Lpp(\Omega)}\\
  & \qquad  \leq  \sum_{i=1}^d |Q|^{-1}
    \| \chi_Q w_i^{-1}\|_{\Lcpp(\Omega)} \| \chi_Q w_i \|_{\Lpp(\Omega)}\\
	& \qquad  \leq \sum_{i=1}^d [w_i]_{A_\pp}\\
	& \qquad <\infty.
\end{align*}

 Hence, $\sum_{i=1}^d w_i \in A_\pp$.
\end{proof}

\begin{proof}[Proof of Proposition~\ref{opNormApdot}]
  Let $\{\ve_i\}_{i=1}^d$ be the standard coordinate basis in
  $\R^n$. By Proposition \ref{opNorm:equiv},
  $v(x)\approx \sum_{i=1}^d |W(x)\ve_i|$, so by Lemma
  \ref{AppAddition} it will suffice to show that
  $|W(x)\ve_i|\in A_\pp$ for each $i=1,\ldots, d$. Fix $i$, let
  $Q\subseteq \Omega$ be a cube, and let
  $f\in \Lpp(|W \ve_i|,\Omega)$. Then,
\begin{align*}
\| |W(\cdot)\ve_i| A_Q f(\cdot)\|_{\Lpp(\Omega)} & = \left\| \dashint_Q |W(x)\ve_i| f(y) \, dy \;\chi_Q(x)\right\|_{\Lpp_x(\Omega)}\\
	& \leq \left\| \dashint_Q |W(x) W^{-1}(y)|_{\op} |W(y)\ve_i|f(y) \, dy \; \chi_Q(x)\right\|_{\Lpp_x(\Omega)}\\
	& \leq  |Q|^{-1}\| \||W(x)W^{-1}(y)|_{\op}\;\chi_Q(y)\|_{\Lcpp_y(\Omega)}\chi_Q(x)\|_{\Lpp_x} \||W\ve_i|f\|_{\Lpp(\Omega)} \\
& \leq [W]_{\calA_\pp} \| |W\ve_i|f\|_{\Lpp(\Omega)}.
\end{align*}
Thus, $A_Q: \Lpp(|W\ve_i|,\Omega) \to \Lpp(|W\ve_i|,\Omega)$ uniformly for any cube $Q\subseteq \Omega$, so by Proposition \ref{AvgOpBddEquivApp-Scalar} we have that $|W\ve_i|\in A_\pp$. 
\end{proof}
%

%
%
%
\section{Convergence Using Approximate Identities}\label{ConvConvergence:sec}

In this section we prove
Theorem~\ref{ConvolutionConvergence:theorem}.  The heart of the proof
is the following lemma.  Convolution with $|Q|^{-1}\chi_Q$ acts
like an averaging operator, and the proof of this result is very
similar to the proof of Theorem~\ref{SumAvgOpBdd}.

\begin{lemma}\label{ConvLocallyBdd}
Fix $\pp\in LH(\Omega)$ and $W \in \calA_\pp$. Given any cube $Q$ centered at the origin and $\vf\in \Lpp(W,\Omega)$, 
\[\| |Q|^{-1} \chi_Q \ast \vf\|_{\Lpp(W,\Omega)} \leq C(n,d,\pp) [W]_{\calA_\pp} \|\vf\|_{\Lpp(W,\Omega)}.\]
The same inequality is true if we replace the cube $Q$ with any ball $B$ centered at the origin.
\end{lemma}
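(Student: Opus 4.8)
The plan is to regard convolution with $|Q|^{-1}\chi_Q$ as a continuous family of averaging operators and then argue as in the proof of Theorem~\ref{SumAvgOpBdd}. Since $Q$ is centered at the origin it is symmetric, so $|Q|^{-1}\chi_Q\ast\vf(x)=\dashint_{x+Q}\vf(y)\,dy$. I would fix a tiling $\{R_k\}_k$ of $\R^n$ (up to measure zero) by cubes of side length $\ell(Q)$, and for each $k$ let $\widetilde R_k$ be the concentric cube of side $3\ell(Q)$, i.e.\ the union of $R_k$ and its $3^n-1$ neighbors in the tiling. The two geometric facts used are: (i) if $x\in R_k$ then $x+Q\subseteq\widetilde R_k$; and (ii) $|Q|^{-1}=3^n|\widetilde R_k|^{-1}$. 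As in the discussion following~\eqref{eqn:Matrix-App}, we regard $W$ (and $\pp$) as defined on a domain large enough to contain all the cubes $\widetilde R_k$.

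Now I follow the proof of Theorem~\ref{SumAvgOpBdd}. Extend $\vf$ by zero outside $\Omega$ and apply Proposition~\ref{VectorDuality} to $W\bigl(|Q|^{-1}\chi_Q\ast\vf\bigr)$ to obtain $\vh\in\Lcpp(\Omega;\R^d)$ with $\|\vh\|_{\Lcpp(\Omega;\R^d)}\leq d$ and
\[\bigl\|\,|Q|^{-1}\chi_Q\ast\vf\,\bigr\|_{\Lpp(W,\Omega)}\leq 4\int_\Omega W(x)\bigl(|Q|^{-1}\chi_Q\ast\vf\bigr)(x)\cdot\vh(x)\,dx .\]
Setting $\vg=W\vh$ (so that $W$ invertible a.e.\ gives $\|W^{-1}\vg\|_{\Lcpp(\Omega;\R^d)}\leq d$), inserting $W^{-1}(y)W(y)$, using the triangle inequality together with $|W(x)W^{-1}(y)W(y)\vf(y)|\leq|W(x)W^{-1}(y)|_{\op}\,|W(y)\vf(y)|$, and then splitting the $x$-integral over the $R_k$ and enlarging the inner integration from $x+Q$ to $\widetilde R_k$ via (i) and (ii), the right-hand side is at most
\[4\cdot 3^n\sum_k|\widetilde R_k|^{-1}\int_{R_k}\int_{\widetilde R_k}|W(x)W^{-1}(y)|_{\op}\,|W(y)\vf(y)|\,dy\;|W^{-1}(x)\vg(x)|\,dx .\]
Applying Lemma~\ref{Holder} first in $y$ over $\widetilde R_k$ and then in $x$ over $\widetilde R_k\supseteq R_k$, and using the definition of $[W]_{\calA_\pp}$, this is bounded by
\[64\cdot 3^n\,[W]_{\calA_\pp}\sum_k\|\vf\chi_{\widetilde R_k}\|_{\Lpp(W,\Omega)}\,\|\vg\chi_{\widetilde R_k}\|_{\Lcpp(W^{-1},\Omega)} .\]

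The one genuinely new point compared with Theorem~\ref{SumAvgOpBdd} is that the cubes $\widetilde R_k$ are not pairwise disjoint, so property $\calG$ (Lemma~\ref{LH-G:lemma}, valid since $\pp\in LH(\Omega)$) cannot be applied directly to $\{\widetilde R_k\}_k$; this is the step I expect to be the main obstacle. I would handle it by a coloring argument: color the index $k$ by the residues modulo $3$ of the coordinates of the center of $R_k$, partitioning the indices into $3^n$ classes within each of which the cubes $\widetilde R_k$ have pairwise disjoint interiors, since two cubes of side $3\ell(Q)$ whose centers differ by a nonzero element of $3\ell(Q)\Z^n$ do. Writing $\|\vf\chi_{\widetilde R_k}\|_{\Lpp(W,\Omega)}=\bigl\|\,|W\vf|\,\chi_{\widetilde R_k}\bigr\|_{\Lpp(\Omega)}$ and $\|\vg\chi_{\widetilde R_k}\|_{\Lcpp(W^{-1},\Omega)}=\bigl\|\,|W^{-1}\vg|\,\chi_{\widetilde R_k}\bigr\|_{\Lcpp(\Omega)}$, and noting $|W\vf|\in\Lpp(\Omega)$ with norm $\|\vf\|_{\Lpp(W,\Omega)}$ and $|W^{-1}\vg|\in\Lcpp(\Omega)$ with norm $\leq d$, I apply property $\calG$ on each of the $3^n$ disjoint subfamilies and sum:
\[\sum_k\|\vf\chi_{\widetilde R_k}\|_{\Lpp(W,\Omega)}\,\|\vg\chi_{\widetilde R_k}\|_{\Lcpp(W^{-1},\Omega)}\leq 3^n\,C(n,\pp)\,\|\vf\|_{\Lpp(W,\Omega)}\,\|W^{-1}\vg\|_{\Lcpp(\Omega;\R^d)} .\]
Since $\|W^{-1}\vg\|_{\Lcpp(\Omega;\R^d)}\leq d$, combining the displays gives the stated inequality with $C=C(n,d,\pp)$. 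For a ball $B$ centered at the origin the argument is identical, tiling by cubes of side equal to the diameter of $B$ (so that (i) still holds) and replacing $|Q|^{-1}$ by $|B|^{-1}$, the ratio $|R_k|/|B|$ being a dimensional constant; alternatively one dominates $|B|^{-1}\chi_B\ast|\vf|$ pointwise by a fixed multiple of $|Q|^{-1}\chi_Q\ast|\vf|$ for the circumscribed cube $Q$.
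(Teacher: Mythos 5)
Your proof is correct and follows essentially the same route as the paper: both tile $\R^n$ by cubes of side $\ell(Q)$, bound the convolution on each tile by an average over the tripled concentric cube, apply Proposition~\ref{VectorDuality} and Lemma~\ref{Holder} twice to reach a sum dominated by $[W]_{\calA_\pp}$ times products of localized norms, split the tripled cubes into $3^n$ pairwise-disjoint subfamilies, and apply property~$\calG$ to each. The only cosmetic difference is that you carry the substitution $\vg=W\vh$ through (matching the proof of Theorem~\ref{SumAvgOpBdd}), whereas the paper's proof of the lemma applies the dual function directly and tracks $|\vg|$ in place of $|W^{-1}\vg|$; for balls both you and the paper ultimately use comparison with the circumscribed cube.
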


\begin{remark}
  We want to emphasize that for this proof to work, we need that the
  cubes or balls are centered at the origin.  In the proof of the
  constant exponent version of this result
  in~\cite[Lemma~4.8]{cruz-uribe_matrix_2016}, this fact was used implicitly, but
  was not made clear in the statement of the result.
\end{remark}
\begin{proof}
  Fix a cube $Q$ centered at the origin. For each $\vk\in \Z^n$,
  define $Q_\vk = Q+\ell(Q)\vk$. Then $\{Q_\vk\}_{\vk\in \Z^n}$ forms
  a partition of $\R^n$. For a fixed $\vk$, let $x \in Q_\vk$. Then,
  since $Q$ is centered at the origin,
  $\{y \in \R^n: x-y \in Q\} \subseteq 3Q_\vk$. Let
  $\vf\in \Lpp(W,\Omega)$ and extend $\vf(x)$ to equal $0$ on
  $\R^n\backslash \Omega$. 
  Then, for all $\vk\in \Z^n$,

\begin{align*}
  | W(x) |Q|^{-1}(\chi_Q\ast \vf)(x)|
  & =  \left| W(x) |Q|^{-1}\int_{\R^n} \vf(y) \chi_Q(x-y)\, dy\right|\\
	& \leq |Q|^{-1}\int_{\R^n}|W(x)\vf(y)|\chi_Q(x-y)\, dy\\
	& \leq C(n) \dashint_{3Q_{\vk}} |W(x) \vf(y)|\,dy.\\	
\end{align*}

Since $\pp \in LH(\Omega)$, by Lemma
  \ref{Extension:lemma}, we can extend $\pp$ to be in $LH(\R^n)$. By Proposition \ref{VectorDuality}, there exists $\vg \in \Lcpp(\R^n)$ with $\| \vg\|_{\Lcpp(\R^n)} \leq d$ such that
\begin{align*}
& \| |Q|^{-1}\chi_Q \ast \vf\|_{\Lpp(W,\Omega)} \\
	& \qquad = \left\| \sum_{\vk \in \Z^n} (|Q|^{-1}\chi_Q \ast \vf) \chi_{Q_\vk}\right\|_{\Lpp(W,\Omega)}\\
	& \qquad = \left\| \sum_{\vk\in\Z^n} W(|Q|^{-1}\chi_Q\ast \vf ) \chi_{Q_\vk} \right\|_{\Lpp(\Omega)}\\
	& \qquad \leq 4 \int_{\R^n} \left( \sum_{\vk\in\Z^n} W(x) ( |Q|^{-1}\chi_Q \ast \vf)(x)\chi_{Q_\vk}(x)\right) \cdot \vg(x) \,dx\\
	& \qquad = 4\sum_{\vk\in\Z^n}\int_{Q_{\vk}} W(x) ( |Q|^{-1}\chi_Q \ast \vf)(x) \cdot \vg(x) \,dx\\
	& \qquad \leq 4 \sum_{\vk\in\Z^n} \int_{Q_\vk} C(n) \dashint_{3Q_\vk} |W(x) \vf(y)  | \,dy \,|\vg(x)|\, dx\\
	& \qquad \leq 4 C(n) \sum_{\vk\in \Z^n} \int_{Q_\vk} |3Q_\vk|^{-1} \int_{3Q_\vk} |W(x)W^{-1}(y)|_{\op} |W(y) \vf(y)| \, dy \, |\vg(x)| \, dx.
\end{align*}

Using Lemma \ref{Holder} twice, we bound the previous expression by
\begin{align*}
  &16C(n) \sum_{\vk\in\Z^n} \int_{Q_\vk} |3Q_\vk|^{-1}
    \| |W(x)W^{-1}(\cdot)|_{\op} \chi_{3Q_\vk}(\cdot) \|_{\Lcpp(\R^n)}
    \| \vf\chi_{3Q_\vk}\|_{\Lpp(W,\R^n)} |\vg(x)| \, dx\\
  & \quad \leq 64 C(n) [W]_{\calA_\pp} \sum_{\vk\in \Z^n}
    \| \vf\chi_{3Q_\vk}\|_{\Lpp(W,\R^n)} \| \vg\chi_{3Q_\vk}\|_{\Lcpp(\R^n)}.
\end{align*}

We can divide the cubes $\{3Q_\vk\}_{\vk \in \Z^n}$ into $3^n$ families $\calQ_i$ of pairwise disjoint cubes. Since $\pp \in LH(\R^n)$, by Lemma \ref{LH-G:lemma}, $\pp \in \calG$. Since each family $\calQ_i$ is pairwise disjoint, $\|\vg\|_{\Lpp(\R^n)}\leq d$, and $\vf=0$ outside $\Omega$, we have
\begin{align*}
\sum_{\vk \in \Z^n} \| \vf \chi_{3Q_\vk}\|_{\Lpp(W,\R^n)} \| \vg \chi_{3Q_\vk}\|_{\Lcpp(\R^n)} & \leq \sum_{i=1}^{3^n} \sum_{P\in \calQ_i} \| \vf \chi_P \|_{\Lpp(W,\R^n)} \| \vg \chi_P\|_{\Lcpp(\R^n)}\\
    & \leq 3^n C(n,\pp) \| \vf\|_{\Lpp(W,\R^n)}\| \vg\|_{\Lcpp(\R^n)}\\
    & \leq 3^n C(n,\pp) d \| \vf\|_{\Lpp(W,\Omega)}.
\end{align*}
This completes the proof for cubes.

To prove this result for balls, fix a ball $B$ centered at the origin
and let $Q$ be the smallest cube centered at the origin containing
$B$. Then $|B| \approx |Q|$, and so for any $x\in \R^n$,
\begin{align*}
|W(x) |B|^{-1} (\chi_B \ast \vf)(x)| & = \left|W(x)|B|^{-1} \int_{\R^n} \vf(y) \chi_{B}(x-y) \, dy \right|\\
    & \leq |B|^{-1} \int_{\R^n} |W(x) \vf(y)| \chi_B(x-y) \, dy\\
    & \leq C(n) |Q|^{-1}\int_{\R^n} |W(x) \vf(y)| \chi_Q(x-y) \, dy.\\
\end{align*}
The proof then continues as before.  
\end{proof}

We now prove our main result about the convergence of convolution operators.
\begin{proof}[Proof of Theorem \ref{ConvolutionConvergence:theorem}]
Fix a function $\vf \in \Lpp(W,\Omega)$ and extend $\vf$ to $0$ on
$\R^n \bk \Omega$.  Define  the function 
\[\Phi(x)=\sum_{k=1}^\infty a_k|B_k|^{-1} \chi_{B_k}(x),\]
where $\{B_k\}_{k=1}^\infty$ is a sequence of balls, each centered at
the origin with $B_{k+1}\subset B_k$ for all $k$, and the $a_k$ are
nonnegative with $\sum_{k=1}^\infty a_k =1$.  To prove
inequality~\eqref{ConvolutionBound} it will suffice to prove that
\begin{equation} \label{eqn:Phi-bound}
\|\Phi\ast \vf\|_{\Lpp(W,\Omega)} \leq C(n,\pp)[W]_{\calA_\pp}
\|\vf\|_{\Lpp(W,\Omega)};
\end{equation}
for if this holds, then inequality \eqref{ConvolutionBound} will
follow if we approximate $\phi_t$ from below by a sequence of such
functions and apply Fatou's lemma for variable Lebesgue spaces (see
\cite[Theorem 2.61]{cruz-uribe_variable_2013}).

But to prove inequality~\eqref{eqn:Phi-bound}, note that by Minkowski's
inequality and Lemma \ref{ConvLocallyBdd}, we have
\begin{align*}
  \|\Phi\ast \vf\|_{\Lpp(W,\Omega)}
  & \leq \sum_{k=1}^\infty a_k \||B_k|^{-1} \chi_{B_k}\ast \vf\|_{\Lpp(W,\Omega)}\\
	& \leq C(n, \pp) [W]_{\calA_\pp} \sum_{k=1}^\infty a_k \|\vf\|_{\Lpp(W,\Omega)}\\
	& \leq C(n,\pp) [W]_{\calA_\pp} \|\vf\|_{\Lpp(W,\Omega)}.
\end{align*}

We now prove the limit~\eqref{ConvolutionConvergence:ineq}. Fix
$\epsilon>0$. Since $p_+<\infty$, by Lemma
\ref{ContCompSuppDense}, $C_c^\infty(\Omega;\R^d)$ is dense in
$\Lpp(W,\Omega)$, and so there exists
$\vg \in C_c^\infty(\Omega;\R^d)$ such that
$\|\vf-\vg\|_{\Lpp(W,\Omega)}<\epsilon$.

By a classical result (see~\cite[Theorem~8.14]{MR1681462}),
$\phi_t\ast \vg \to \vg$ uniformly as $t\to 0$.
Since $W \in \calA_\pp$, $|W|_{\op}\in A_\pp$ by Proposition \ref{opNormApdot}. Consequently, $|W|_{\op} \in \Lpp(\Omega)$ by the definition of $A_\pp$. Thus, for $t$ sufficiently small,
\[ \|\phi_t\ast \vg - \vg\|_{\Lpp(W,\Omega)} \leq \| |W|_{\op} |\phi_t\ast \vg - \vg|\|_{\Lpp(\Omega)} \leq \epsilon.\]

Combining this with Lemma \ref{ConvLocallyBdd}, we have that for $t$ sufficiently small,
\begin{align*}
\|\phi_t \ast \vf - \vf\|_{\Lpp(W,\Omega)} & \leq \|\phi_t\ast \vf-\phi_t\ast \vg\|_{\Lpp(W,\Omega)} + \|\phi_t\ast \vg - \vg\|_{\Lpp(W,\Omega)} + \|\vg - \vf\|_{\Lpp(W,\Omega)}\\
	& \leq (1+C(n,\pp)[W]_{\calA_\pp})\|\vf-\vg\|_{\Lpp(W,\Omega)} + \|\phi_t\ast \vg - \vg\|_{\Lpp(W,\Omega)}\\
	& \lesssim \epsilon.
\end{align*}
Since $\epsilon>0$ was arbitrary, \eqref{ConvolutionConvergence:ineq}
follows at once.
\end{proof}


%
%
%
\section{Application: $\mathscr{H}=\mathscr{W}$}\label{H=W:sec}

In this section, we prove Theorems~\ref{H=W:theorem}
and~\ref{H=W-scalar:theorem} and Corollary~\ref{cor:smooth-dense-Rn}.
We will concentrate on the results for the Sobolev space $\scrWpp(W,\Omega)$;
the analogous results for  $\scrWpp(v,W,\Omega)$ are proved similarly
and we will briefly sketch the changes.   

We first recall a few definitions on weak derivatives;  see~\cite{gilbarg_elliptic_2001}
for more information.  A function $f\in L^1_{\text{loc}}(\Omega)$ is weakly
differentiable, or has derivatives in the distributional sense, with respect to $x_j$, if
there exists a function $g_j \in L^1_{\text{loc}}(\Omega)$ such that
for every $\phi \in C_c^\infty(\Omega)$,
\[ \int_\Omega g_j(x)\phi(x)\,dx
  = - \int_\Omega f(x) \partial_j \phi(x)\,dx.  \]
We will denote this function $g_j$ by $\partial_j f$. Define
$\scrWloc(\Omega)$ to be the collection of functions $f\in
L^1_{\text{loc}}(\Omega)$ such that $\partial_j f$ exists for
$j=1,\ldots, n$.  
If $\vf = (f_1, \ldots, f_d)^T \in L^1_{\text{loc}}(\Omega;\R^d)$,
then define the weak derivative of $\vf$ with respect to $x_j$ by
$\partial_j \vf = (\partial_j f_1, \ldots, \partial_j f_d)^T$. Define
$\scrWloc(\Omega;\R^d)$ to be the set of functions
$\vf \in L^1_{\text{loc}}(\Omega;\R^d)$ such that $\partial_j f_i$
exists for $j=1,\ldots, n$, $i=1,\ldots, d$.  Let $D\vf
=(\partial_j f_i)_{i,j}$ be the Jacobian matrix of $\vf$. 

Given $\pp \in \Pp(\Omega)$ and a matrix weight
$W : \Omega \rightarrow \calS_d$, define $\scrWpp(W,\Omega)$ to be the
collection of $\vf \in \scrWloc(\Omega;\R^d)$ such that
\begin{align}\label{SobSpaceNorm}
\|\vf\|_{\scrWpp(W,\Omega)} := \|\vf\|_{\Lpp(W,\Omega)} + \| |W D\vf|_{op}\|_{\Lpp(\Omega)} <\infty.
\end{align}
Similarly, for an $n\times n$ matrix weight $W:\Omega \to \calS_n$, let $v=|W|_\op$ and define $\scrWpp(v,W,\Omega)$ to be
the collection of $f\in \scrWloc(\Omega)$ such that
\[ \|f\|_{\scrWpp(v,W,\Omega)} = \|f\|_{\Lpp(v,\Omega)} + \| \nabla
  f\|_{\Lpp(W,\Omega)} < \infty. \]

Our definition of $\scrWpp(W,\Omega)$
follows~\cite{isralowitz_matrix_2019} for constant exponents.
In~\cite{MR2104276} the authors gave an equivalent definition of this
space, which  they
denoted $L^p_1(W,\Omega)$.  Following them,  define the Sobolev
space $\Lpp_1(W,\Omega)$ to be the collection of functions
$\vf : \Omega \to \R^d$ such that
\begin{align}\label{SobSpaceNorm:Frazier}
\| \vf\|_{\Lpp_1(W,\Omega)} : = \| \vf\|_{\Lpp(W,\Omega)} + \sum_{j=1}^n \| \partial_j \vf\|_{\Lpp(W,\Omega)}.
\end{align}
As in the constant exponent case, the  spaces $\scrWpp(W,\Omega)$ and $\Lpp_1(W,\Omega)$ are equivalent.

\begin{prop}\label{W=L}
Let $\pp \in \Pp(\Omega)$ and $W : \Omega \to \calS_d$ be a matrix weight. Then $\scrWpp(W,\Omega) = \Lpp_1(W,\Omega)$ with equivalent norms.
\end{prop}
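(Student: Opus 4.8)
The plan is to show that the two norms in \eqref{SobSpaceNorm} and \eqref{SobSpaceNorm:Frazier} are pointwise comparable (up to dimensional constants) on the level of the quantities $|WD\vf(x)|_{\op}$ and $\sum_j |W(x)\partial_j\vf(x)|$, and then to invoke the equivalence of norms in $\Lpp(\Omega)$ established in \eqref{SumNorm:equiv}. The key observation is that $WD\vf$ is the $d\times n$ matrix whose $j$-th column is $W\partial_j\vf$, so applying Lemma~\ref{opNorm:equiv} with the standard orthonormal basis $\{\ve_j\}_{j=1}^n$ of $\R^n$ gives, for a.e.\ $x\in\Omega$,
\[
|W(x)D\vf(x)|_{\op} \approx \sum_{j=1}^n |W(x)D\vf(x)\ve_j| = \sum_{j=1}^n |W(x)\partial_j\vf(x)|,
\]
with implicit constants depending only on $n$.

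From here the proof is essentially bookkeeping. First I would note that the two definitions act on a priori the same underlying class of functions: both $\scrWpp(W,\Omega)$ and $\Lpp_1(W,\Omega)$ sit inside $\scrWloc(\Omega;\R^d)$ (for $\Lpp_1$ one checks that each $\partial_j\vf$ being in $\Lpp(W,\Omega)$, together with $W$ invertible a.e., forces $\partial_j\vf\in L^1_{\text{loc}}$, which is exactly the requirement; if $W$ is only positive semidefinite one restricts attention, as the paper implicitly does, to where the weak-derivative class is well-defined). Then, taking $\Lpp(\Omega)$ norms of the pointwise comparison above and using \eqref{SumNorm:equiv} with $m=n$ and $\vf_j = W\partial_j\vf$,
\[
\frac1n\sum_{j=1}^n \|\partial_j\vf\|_{\Lpp(W,\Omega)}
\leq \Big\|\sum_{j=1}^n |W\partial_j\vf|\Big\|_{\Lpp(\Omega)}
\leq \sum_{j=1}^n \|\partial_j\vf\|_{\Lpp(W,\Omega)},
\]
and the middle term is comparable to $\||WD\vf|_{\op}\|_{\Lpp(\Omega)}$ by Lemma~\ref{opNorm:equiv} together with the monotonicity of the $\Lpp(\Omega)$ norm (the Lemma~\ref{opNorm:equiv} equivalence being pointwise, one can take norms on both sides since the norm is a lattice norm). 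Adding $\|\vf\|_{\Lpp(W,\Omega)}$ to both sides then yields $\|\vf\|_{\scrWpp(W,\Omega)} \approx \|\vf\|_{\Lpp_1(W,\Omega)}$ with constants depending only on $n$, which also shows the two finiteness conditions are equivalent, so the spaces coincide as sets.

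I do not expect any serious obstacle here; the only point requiring mild care is the justification that taking the $\Lpp(\Omega)$ norm of a pointwise equivalence $F(x)\approx G(x)$ yields $\|F\|_{\Lpp}\approx\|G\|_{\Lpp}$ — this follows because the variable Lebesgue norm is monotone (a consequence of the definition via the modular), so $F\le cG$ a.e.\ implies $\|F\|_{\Lpp}\le c\|G\|_{\Lpp}$. One should also state explicitly that Lemma~\ref{opNorm:equiv} is being applied with $n$ (the ambient Euclidean dimension) playing the role of ``$n$'' in that lemma and with the $d\times n$ matrix $V=W(x)D\vf(x)$, so the implicit constants depend only on $n$. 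No regularity of $\pp$ is needed for this proposition, consistent with the hypothesis $\pp\in\Pp(\Omega)$ alone.
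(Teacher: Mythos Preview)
Your proposal is correct and follows essentially the same approach as the paper's proof: apply Lemma~\ref{opNorm:equiv} pointwise to the $d\times n$ matrix $W(x)D\vf(x)$ with the standard basis $\{\ve_j\}_{j=1}^n$ to get $|WD\vf|_{\op}\approx\sum_j|W\partial_j\vf|$, then take $\Lpp$ norms and invoke \eqref{SumNorm:equiv}. Your additional remarks on the monotonicity of the $\Lpp$ norm and the underlying function class are sound but are left implicit in the paper.
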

\begin{proof}
  Let $\vf=(f_1, \ldots, f_d)^T \in \scrWloc(\Omega)$.  Let
  $\{\ve_j\}_{j=1}^n$ be the standard coordinate basis in $\R^n$. Then
  by Lemma \ref{opNorm:equiv},
\[ \| |WD\vf|_{op}\|_{\Lpp(\Omega)} \approx \left\| \sum_{j=1}^n |W D\vf \ve_j|\right\|_{\Lpp(\Omega)} = \left\| \sum_{j=1}^n |W\partial_j \vf|\right\|_{\Lpp(\Omega)}.\]
By inequality \eqref{SumNorm:equiv}, 
\[ \frac{1}{n}\sum_{j=1}^n  \| |W\partial_j \vf|\|_{\Lpp(\Omega)} \leq \left\| \sum_{j=1}^n |W\partial_j \vf|\right\|_{\Lpp(\Omega)} \leq  \sum_{j=1}^n  \| |W\partial_j \vf|\|_{\Lpp(\Omega)} .\]
Hence, $\| \vf\|_{\scrWpp(W,\Omega)} \approx \| \vf\|_{\Lpp_1(W,\Omega)}$ with implicit constants depending only on $n$.
\end{proof}

\begin{theorem}\label{WBanach}
Let $\pp \in \Pp(\Omega)$ and $W \in \calA_\pp$. Then
$\scrWpp(W,\Omega)$ is a Banach space.  Similarly,
$\scrWpp(v,W,\Omega)$ is a Banach space.
\end{theorem}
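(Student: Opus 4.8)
The plan is to mimic the classical proof that $W^{1,p}(\Omega)$ is complete, reducing matters to the completeness of $\Lpp(W,\Omega)$ together with the fact that norm convergence in $\Lpp(W,\Omega)$ forces local $L^1$ convergence, so that weak derivatives pass to the limit.

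First I would observe that $\Lpp(W,\Omega)$ is itself a Banach space. Since $W\in\calA_\pp$ is invertible a.e., $W^{-1}$ is a measurable matrix function, and $\vf\mapsto W\vf$ is a linear isometry of $\Lpp(W,\Omega)$ into $\Lpp(\Omega;\R^d)$; it is onto, because any $\vg\in\Lpp(\Omega;\R^d)$ is the image of $W^{-1}\vg\in\Lpp(W,\Omega)$ (as $|W(W^{-1}\vg)|=|\vg|$). Since $\Lpp(\Omega)$, hence $\Lpp(\Omega;\R^d)$, is complete (see~\cite{cruz-uribe_variable_2013}), so is $\Lpp(W,\Omega)$. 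The same argument with a scalar weight $v>0$ a.e.\ shows $\Lpp(v,\Omega)$ is complete.

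Next, let $\{\vf_k\}$ be Cauchy in $\scrWpp(W,\Omega)$. By Proposition~\ref{W=L} this is equivalent to $\{\vf_k\}$ and $\{\partial_j\vf_k\}$, $j=1,\dots,n$, all being Cauchy in $\Lpp(W,\Omega)$, so there exist $\vf,\vg_1,\dots,\vg_n\in\Lpp(W,\Omega)$ with $\vf_k\to\vf$ and $\partial_j\vf_k\to\vg_j$ in $\Lpp(W,\Omega)$. The crucial point --- and what I expect to be the main obstacle --- is that these convergences also hold in $L^1_{\text{loc}}(\Omega;\R^d)$, and this is where the hypothesis $W\in\calA_\pp$ is essential. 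Fix a cube $Q\subseteq\Omega$. The $\calA_\pp$ condition implies that $x\mapsto\||W(x)W^{-1}(\cdot)|_\op\chi_Q(\cdot)\|_{\Lcpp(\Omega)}$ lies in $\Lpp(Q)$, hence is finite for a.e.\ $x_0\in Q$; fixing such an $x_0$ with $W(x_0)$ invertible and using $|W^{-1}(y)|_\op\le|W^{-1}(x_0)|_\op\,|W(x_0)W^{-1}(y)|_\op$ gives $\||W^{-1}|_\op\chi_Q\|_{\Lcpp(\Omega)}<\infty$. Since $|\vh|\le|W^{-1}|_\op|W\vh|$ pointwise, Lemma~\ref{Holder} yields $\int_Q|\vh|\,dx\le 4\||W^{-1}|_\op\chi_Q\|_{\Lcpp(\Omega)}\|\vh\|_{\Lpp(W,\Omega)}$ for every $\vh\in\Lpp(W,\Omega)$; applying this to $\vh=\vf_k-\vf$ and $\vh=\partial_j\vf_k-\vg_j$ gives the claimed local $L^1$ convergence.

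With this in hand the proof concludes routinely. For any $\phi\in C_c^\infty(\Omega)$, testing the weak-derivative identity for $\vf_k$ against $\phi$ componentwise and letting $k\to\infty$ (legitimate since the integrals are over $\supp\phi$, where we have $L^1$ convergence) gives
\[\int_\Omega\vg_j\phi\,dx=\lim_{k\to\infty}\int_\Omega(\partial_j\vf_k)\phi\,dx=-\lim_{k\to\infty}\int_\Omega\vf_k\,\partial_j\phi\,dx=-\int_\Omega\vf\,\partial_j\phi\,dx,\]
so $\partial_j\vf=\vg_j$ in the weak sense; hence $\vf\in\scrWloc(\Omega;\R^d)$ with $\partial_j\vf\in\Lpp(W,\Omega)$, i.e.\ $\vf\in\scrWpp(W,\Omega)$, and $\|\vf_k-\vf\|_{\scrWpp(W,\Omega)}\to 0$ by Proposition~\ref{W=L}. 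For $\scrWpp(v,W,\Omega)$ the same scheme works: $v=|W|_\op\in A_\pp$ by Proposition~\ref{opNormApdot}, so $v^{-1}$ is locally in $\Lcpp$ and Hölder's inequality gives that $\Lpp(v,\Omega)$-convergence implies $L^1_{\text{loc}}$-convergence of the functions, the gradients being handled exactly as above, after which one identifies $\nabla f$ with the $\Lpp(W,\Omega)$-limit of $\{\nabla f_k\}$.
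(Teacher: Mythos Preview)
Your proof is correct and follows essentially the same route as the paper: reduce to completeness of $\Lpp(W,\Omega)$, pass to limits there, and use $\||W^{-1}|_\op\chi_K\|_{\Lcpp}<\infty$ together with H\"older's inequality to upgrade $\Lpp(W,\Omega)$-convergence to $L^1_{\mathrm{loc}}$-convergence so that weak derivatives are preserved. The only difference is cosmetic: to obtain $\||W^{-1}|_\op\chi_K\|_{\Lcpp}<\infty$ the paper invokes the duality $W^{-1}\in\calA_\cpp$ and Proposition~\ref{opNormApdot}, whereas you extract it directly from the $\calA_\pp$ condition via a pointwise bound at a good $x_0$---both arguments are valid and equally short.
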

\begin{proof}
We will prove that $\Lpp_1(W,\Omega)$ is a Banach space. Proposition \ref{W=L} then implies $\scrWpp(W,\Omega)$ is also a Banach space; the proof
for $\scrWpp(v,W,\Omega)$ is essentially the same (using the fact that
$\Lpp(v,\Omega)$ is a Banach space:
see~\cite[Theorem~2.5]{Penrod-Poincare-2022}) and so is omitted.  Our
proof is adapted from the proof
of~\cite[Theorem~5.2]{cruz-uribe_matrix_2016}.

Since we can map each $\vf\in \Lpp_1(W,\Omega)$ to a unique
$(n+1)$-tuple $(\vf, \partial_1 \vf, \ldots, \partial_n \vf)$, we
may consider $\Lpp_1(W,\Omega)$ as a linear subspace of
$\bigoplus_{j=0}^n \Lpp(W,\Omega)$. Since $\Lpp(W,\Omega)$ is a Banach
space (see \cite[Theorem 2.12]{Penrod-Poincare-2022}), so is
$\bigoplus_{j=0}^n \Lpp(W,\Omega)$. Therefore, to complete the proof it will suffice  to show
$\Lpp_1(W,\Omega)$ is a closed subspace of
$\bigoplus_{j=0}^n \Lpp(W,\Omega)$.

Fix a Cauchy sequence $\{\vu_k\}_{k=1}^\infty$ in
$\Lpp_1(W,\Omega)$. Since $\Lpp(W,\Omega)$ is
complete, there exist $\vv,\vv_1, \ldots, \vv_n\in \Lpp(W,\Omega)$
such that $\vu_k \to \vv$ and $\partial_j(\vu_k) \to \vv_j$ for
$j=1,\ldots, n$ in $\Lpp(W,\Omega)$. For each $j=1,\ldots, n$, we will show that
$\partial_j \vv= \vv_j$, i.e., for all $\phi \in C_c^\infty(\Omega)$,
\begin{align}\label{WBanach:eq}
\int_\Omega \vv_j(x) \phi(x)\, dx = - \int_{\Omega} \vv(x) \partial_j\phi(x) \,dx.
\end{align}

Fix $\phi \in C_c^\infty(\Omega)$ and let $K=\supp(\phi)$. We first
show both integrals are finite. For the first, observe that by Lemma~\ref{Holder},
\begin{multline*}
  \int_{\Omega} |\vv_j(x) \phi(x)| \,dx
  \leq \| \phi\|_{L^\infty(\Omega)} \int_K |W^{-1}(x) W (x)\vv_j(x)|
  \, dx \\
\leq 4\| \phi\|_{L^\infty(\Omega)} \| |W^{-1}|_{\op} \chi_K\|_{\Lcpp(\Omega)}\| \vv_j\|_{\Lpp(W,\Omega)}.
\end{multline*}
Since $W \in \calA_\pp$, $W^{-1}\in \calA_\cpp$. Thus, by Proposition
\ref{opNormApdot}, $|W^{-1}|_{\op} \in A_\cpp$. Hence,
$\| |W^{-1}|_{\op} \chi_K\|_{\Lcpp(\Omega)} <\infty$. Furthermore,
$\vv_j \in \Lpp(W,\Omega)$. Hence, the first integral in
\eqref{WBanach:eq} is finite.
Similarly, to see the second integral in \eqref{WBanach:eq} is finite, observe that
\begin{multline*}
  \int_\Omega |\vv(x) \partial_j \phi(x)| \, dx
    \leq \| \partial_j \phi\|_{L^\infty(\Omega)} \int_K |W^{-1}(x)W (x)\vv(x)| \, dx\\
    \leq 4\| \partial_j \phi \|_{L^\infty(\Omega)} \| |W^{-1}|_{\op} \chi_K \|_{\Lcpp(\Omega)} \| \vv\|_{\Lpp(W,\Omega)} <\infty.
\end{multline*}

We now show \eqref{WBanach:eq} holds.  If we integrate by parts, we
have have that
\begin{align*}
  \left| \int_{\Omega} \vv_j \phi + \vv \partial_j \phi \, dx \right|
  & \leq \left| \int_\Omega (\vv_j - \partial_j(\vu_k)) \phi \, dx \right| + \left| \int_{\Omega} (\vv - \vu_k) \partial_j \phi \, dx\right|\\
    &  \leq 4\| \phi \|_{L^\infty(\Omega)} \||W^{-1}|_{\op}\chi_K\|_{\Lcpp(\Omega)} \| \vv_j - \partial_j(\vu_k)\|_{\Lpp(W,\Omega)} \\
 & \qquad \qquad     +4 \| \partial_j \phi\|_{L^\infty(\Omega)} \| |W^{-1}|_{\op} \chi_K \|_{\Lcpp(\Omega)}\| \vv - \vu_k\|_{\Lpp(W,\Omega)}.
\end{align*}

Both norms on the right go to zero as $k \to \infty$, and so $\vv_j = \partial_j \vv$ for $j=1,\ldots, n$. Hence, $\vv \in \Lpp_1(W,\Omega)$. Thus, $\Lpp_1(W,\Omega)$ is a closed
subspace of the Banach space $\bigoplus_{j=0}^n \Lpp(W,\Omega)$, and
hence is a Banach space.
\end{proof}

\begin{definition}\label{SobSpaceH:def}
Let $\pp \in \Pp(\Omega)$. For a $d\times d$ matrix weight $W \in \A_\pp$, define $\scrHpp(W,\Omega)$ to be the closure of
$C^\infty(\Omega;\R^d)\cap \scrWpp(W,\Omega)$ in  $\scrWpp(W,\Omega)$.
Similarly,
for a $n\times n$ matrix weight $W \in \A_\pp$, define $\scrHpp(v,W,\Omega)$ to be the closure of
$C^\infty(\Omega)\cap \scrWpp(v, W,\Omega)$ in $\scrWpp(v,
W,\Omega)$. 
\end{definition}

We can now prove Theorems~\ref{H=W:theorem}
and~\ref{H=W-scalar:theorem}.  
\begin{proof}[Proof of Theorem \ref{H=W:theorem}]
  The proof is an adaptation of the proof that
  $\mathscr{H}^{1,p}(W,\Omega)=\mathscr{W}^{1,p}(W,\Omega)$ with
  matrix weights in the constant exponent setting:
  see~\cite[Theorem~5.3]{cruz-uribe_matrix_2016}. It is immediate from
  the definition that $\scrHpp(W,\Omega) \subseteq \scrWpp(W,\Omega)$.
  We will show that given any $\vf\in \scrWpp(W,\Omega)$ and any
  $\epsilon>0$, there exists
  $\vg\in C^\infty(\Omega;\R^n) \cap \scrWpp(W,\Omega)$ such that
  $\|\vf-\vg\|_{\scrWpp(W,\Omega)} < \epsilon$.

For each $k \in \N$, define the bounded sets 
\[\Omega_k = \{x\in \Omega : |x|<k, \dist(x,\partial \Omega) >1/k\}.\]
Let $\Omega_0= \Omega_{-1} = \emptyset$, and define the sets
$A_k = \Omega_{k+1}\bk \overline{\Omega}_{k-1}$. These sets are an
open cover of $\Omega$, each $\overline{A}_k$ is compact, and given
$x\in \Omega$, $x\in A_k$ for a finite number of indices $k$. Thus, we
can form a partition of unity subordinate to this cover
(see~\cite[Proposition~4.41]{MR1681462}): there exists
$\psi_k\in C_c^\infty(A_k)$ such that for all $x\in \Omega$,
$0\leq \psi_k(x)\leq 1$ and
\[\sum_{k=1}^\infty \psi_k(x)=1.\]

Fix $\vf\in \scrWpp(W,\Omega)$. Then $\vf\in \scrWloc(\Omega)$, and so $\psi_j \vf\in \scrWloc(\Omega)$. Fix a nonnegative, radially symmetric and decreasing function $\phi \in C_c^\infty(B(0,1))$ with $\int_{B(0,1)} \phi(x) \, dx = 1$. For all $t>0$, define $\phi_t(x) = t^{-n}\phi(x/t)$. Then the convolution
\[\phi_t \ast (\psi_k \vf) (x) = \int_{A_k} \phi_t(x-y) \psi_k(y)\vf(y) \, dy\]
is only non-zero if for some $y\in A_k$, $|x-y|<t$. Hence, for $k\geq 3$, we can choose $t=t_k$ such that $0 <t_k < (k+1)^{-1} -(k+2)^{-1}$. Then, there exists $y\in A_k$ with $|x-y|<t_k$ only if
\[(k+2)^{-1} < \dist(x,\partial \Omega) \leq (k-2)^{-1}.\]
Thus, $\phi_{t_k} \ast (\psi_k \vf)(x)$ is non-zero only if $x \in \Omega_{k+2}\bk \overline{\Omega}_{k-2}$. Hence, 
\[\supp(\phi_{t_k}\ast (\psi_k\vf)) \subseteq \Omega_{k+2}\bk \overline{\Omega}_{k-2} := B_k \Subset \Omega.\]
We will fix the precise value of each $t_k$ below.

Define
\[\vg(x) = \sum_{k=1}^\infty \phi_{t_k}\ast (\psi_k \vf)(x).\]
Since $\phi\in C_c^\infty(\Omega)$, each summand is in $C_c^\infty(\Omega;\R^d)$. Further, for each $x \in \Omega$, $x$ is in finitely many $B_k$, and so the series converges locally uniformly and $\vg\in C^\infty(\Omega;\R^d)$.

Fix $\epsilon>0$. We claim we can choose $t_k$ such that
$\|\vf-\vg\|_{\scrWpp(W,\Omega)}<\epsilon$. To prove this, we consider
each part of the norm \eqref{SobSpaceNorm} separately. Since
$W \in \calA_\pp$, by Theorem \ref{ConvolutionConvergence:theorem},
for each $k$, we may choose $s_k>0$ sufficiently small such that
$\|\psi_k \vf - \phi_{s_k}\ast (\psi_k\vf)\|_{\Lpp(W,\Omega)} <
\epsilon/(2^{k+1})$. Then, we have that
\begin{multline*}
  \|\vf-\vg\|_{\Lpp(W,\Omega)}
  = \bigg\| \sum_{k=1}^\infty
  \big(\psi_k\vf - \phi_{s_k}\ast (\psi_k \vf)\big)\bigg\|_{\Lpp(W,\Omega)}\\
	 \leq \sum_{k=1}^\infty \| (\psi_k \vf - \phi_{s_k}\ast(\psi_k \vf))\|_{\Lpp(W,\Omega)}
	\leq \sum_{k=1}^\infty \frac{\epsilon}{2^{k+1}}
	 =\frac{\epsilon}{2}.
       \end{multline*}
       
The argument for the second part of the norm \eqref{SobSpaceNorm} is
similar. We  will show for each $j=1,\ldots, n$, $\| \partial_j(\vf
-\vg)\|_{\Lpp(W,\Omega)} < \epsilon/(2n)$. Since $W\in \calA_\pp$, by
Theorem \ref{ConvolutionConvergence:theorem}, for each $k$, we may
choose $t_k$ sufficiently small such that $t_k\leq s_k$ and for each $j=1,\ldots, n$,
\[\| \partial_j(\psi_k \vf) - \phi_{t_k}\ast \partial_j(\psi_k \vf)\|_{\Lpp(W,\Omega)} < \frac{\epsilon}{n2^{k+1}}.\]

By Minkowski's inequality, for each $j=1,\ldots, n$,
\begin{multline*}
  \| \partial_j(\vf - \vg)\|_{\Lpp(W,\Omega)}
  = \bigg\| \sum_{k=1}^\infty
  \big(\partial_j(\psi_k \vf) -  \phi_{t_k}\ast \partial_j(\psi_k \vf)\big)\bigg\|_{\Lpp(W,\Omega)}\\
  \leq \sum_{k=1}^\infty
  \| \partial_j(\psi_k\vf) - \phi_{t_k} \ast \partial_j(\psi_k \vf)\|_{\Lpp(W,\Omega)}
  <\frac{\epsilon}{2n}.
\end{multline*}

Thus, we have shown that
\begin{equation*}
  \| \vf-\vg\|_{\scrWpp(W,\Omega)}
   = \| \vf-\vg\|_{\Lpp(W,\Omega)} + \sum_{j=1}^n \|  \partial_j(\vf - \vg)\|_{\Lpp(W,\Omega)} 
	 < \frac{\epsilon}{2} + \sum_{j=1}^n \frac{\epsilon}{2n} 
	 = \epsilon.
       \end{equation*}
       
Hence, $\scrWpp(W,\Omega) \subseteq \scrHpp(W,\Omega)$.
\end{proof}

\begin{remark}
  The proof of Theorem~\ref{H=W-scalar:theorem} is essentially the
  same as the proof of Theorem~\ref{H=W:theorem}, substituting scalar
  $f$ and $g$ for the vector functions $\vf$ and $\vg$.  The only difference
  is that to estimate the first part of the norm
  $\|f-g\|_{\Lpp(v,W,\Omega)}$--that is, $\|f-g\|_{\Lpp(v,\Omega)}$--we
    need to use the scalar version of
    Theorem~\ref{ConvolutionConvergence:theorem}, which is just this
    result when $d=1$.  The details are left to the reader.
  \end{remark}

  Finally, we prove Corollary~\ref{cor:smooth-dense-Rn}.

  \begin{proof}[Proof of Corollary~\ref{cor:smooth-dense-Rn}]
    We will prove this for $\scrWpp(W,\R^n)$; the proof 
    for $\scrWpp(v, W,\R^n)$ is essentially the same.
    Fix $\epsilon>0$ and $\vf \in \scrWpp(W,\R^n)$.  By
    Theorem~\ref{H=W:theorem}, there exists $\vh \in
    C^\infty(\R^n;\R^d) \cap \scrWpp(W,\R^n)$ such that
    $\|\vf-\vh\|_{\scrWpp(W,\R^n)}<\epsilon/2$.  Therefore, to complete
      the proof we need to find $\vg \in C_c^\infty(\R^n;\R^d)$ such
      that
      \[ \|\vg-\vh\|_{\scrWpp(W,\R^n)}<\epsilon/2. \]

        We prove this with a standard truncation argument.  For each
        $k\geq 2$, let $\nu_k \in C_c^\infty(\R^n)$ satisfy $0\leq
        \nu_k \leq 1$, $\nu_k(x)=1$ if $|x|<k$, $\nu_k(x)=0$ if
        $|x|>2k$, and $|\nabla \nu_k| \lesssim 1/k$.  Let $\vg_k =
        \nu_k\vh$.  Then $\vg_k \in C_c^\infty(\R^n; \R^d)$ and $\| \vg_k-\vh\|_{\Lpp(W,\R^n)} =
          \|(1-\nu_k)|W\vh|\|_{\Lpp(\R^n)}$.  
        Since $|W\vh|\in \Lpp(\R^n)$, by the dominated convergence
        theorem in the scale of variable Lebesgue spaces
        (see~\cite[Theorem~2.62]{cruz-uribe_variable_2013}), we have
        that
        \[ \lim_{k\rightarrow \infty} \| \vg_k-\vh\|_{\Lpp(W,\R^n)} = 0. \]
        Therefore, for all $k$ sufficiently large,
        \[ \| \vg_k-\vh\|_{\Lpp(W,\R^n)} < \frac{\epsilon}{2(n+1)}. \]

Now for $j=1,\ldots, n$, observe that
\[ \partial_j (\vg_k - \vh) = \partial_j (\nu_k \vh) - \partial_j \vh = \vh \partial_j \nu_k + \nu_k \partial_j \vh - \partial_j \vh.\]
Hence,
\begin{multline*}
\| \partial_j( \vg_k - \vh)\|_{\Lpp(W,\R^n)}
    \leq \| \vh \partial_j \nu_k\|_{\Lpp(W,\R^n)} + \| (1-\nu_k) |W \partial_j \vh| \|_{\Lpp(\R^n)} \\
    \leq \||\nabla \nu_k| \vh \|_{\Lpp(W,\R^n)} + \| (1-\nu_k) |W\partial_j \vh|\|_{\Lpp(\R^n)}.
\end{multline*}
Since $|\nabla \nu_k| \lesssim 1/k$, $\| |\nabla \nu_k|\vh\|_{\Lpp(W,\R^n)} \lesssim \frac{1}{k} \| \vh\|_{\Lpp(W,\R^n)}$, which converges to zero as $k\to \infty$. Also, by the same dominated convergence theorem argument as before, we have
\[\lim_{k\to \infty} \| (1-\nu_k) |W\partial_j\vh|\|_{\Lpp(\R^n)} =0.\]

Thus, for all $k$ sufficiently large, we have for all $j=1,\ldots, n$,
\[ \| \partial_j (\vg_k - \vh)\|_{\Lpp(W,\R^n)} < \frac{\epsilon}{2(n+1)}.\]
Therefore, we have shown for all sufficiently large $k$,
\begin{multline*}
\| \vg_k - \vh\|_{\scrWpp(W,\R^n)}
    = \|\vg_k - \vh\|_{\Lpp(W,\R^n)} + \sum_{j=1}^n \| \partial_j (\vg_k - \vh)\|_{\Lpp(W,\R^n)}\\
    < \frac{\epsilon}{2(n+1)} + \sum_{j=1}^n \frac{\epsilon}{2(n+1)} = \frac{\epsilon}{2}.
\end{multline*}
Hence, for $k$ sufficiently large,
\[\| \vf-\vg_k\|_{\scrWpp(W,\Omega)} \leq \| \vf-\vh\|_{\scrWpp(W,\Omega)} + \| \vh - \vg_k\|_{\scrWpp(W,\Omega)} <\frac{\epsilon}{2}+ \frac{\epsilon}{2} = \epsilon,\]
and so $C_c^\infty(\R^n;\R^d)$ is dense in $\scrWpp(W,\R^n)$.
  \end{proof}
  
\bibliography{MatrixApdotBib-MathSciNetFormat.bib}
\bibliographystyle{plain}

\end{document}